\documentclass[11pt,letterpaper]{article}
\usepackage[letterpaper,margin=1in]{geometry}
\usepackage{mathrsfs}
\usepackage[T1]{fontenc}
\usepackage{lmodern}
\usepackage{amsmath,amssymb,amsthm,mathtools,bm}
\usepackage{enumitem}
\usepackage[hidelinks]{hyperref}
\usepackage{booktabs}
\usepackage{multirow}
\usepackage{array}
\usepackage{graphicx}
\allowdisplaybreaks
\numberwithin{equation}{section}
\mathtoolsset{showonlyrefs}
\usepackage{tabularray}
\UseTblrLibrary{booktabs}
\theoremstyle{definition}
\newtheorem{assumption}{Assumption}[section]
\newtheorem{definition}[assumption]{Definition}
\newtheorem{theorem}[assumption]{Theorem}
\newtheorem{lemma}[assumption]{Lemma}
\newtheorem{corollary}[assumption]{Corollary}
\newtheorem{proposition}[assumption]{Proposition}
\newtheorem{remark}[assumption]{Remark}
\newtheorem{example}[assumption]{Example}
\newenvironment{namedassumption}[2][]
{\par\medskip\noindent\textbf{Assumption \textup{[#2]}}#1\textbf{.}\ }
{\par\medskip}
\newenvironment{namedsetting}[1]
{\par\medskip\noindent\textbf{Setting \textup{[#1]}.}\ }
{\par\medskip}

\newcommand{\E}{\mathbb E}

\newcommand{\R}{\mathbb R}

\newcommand{\Linftyminus}{L^{\infty\mathrm{-}}}

\newcommand{\Cov}{\operatorname{Cov}}
\newcommand{\vol}{\operatorname{vol}}
\newcommand{\supp}{\operatorname{supp}}

\newcommand{\calB}{\mathcal B}
\newcommand{\dd}{\mathrm d}
\newcommand{\braket}[1]{\left\langle #1\right\rangle}
\newcommand{\frakL}{\mathfrak L}
\newcommand{\calP}{\mathcal P}

\title{Invariant Measures as Estimators:\\
Second-Order Stochastic Expansions and Bias Reduction}
\author{Shota Yano}
\date{}

\begin{document}
\maketitle

\begin{abstract}
We develop a general second-order asymptotic framework for estimators
constructed from invariant probability measures of data-dependent Markov
processes. The framework yields a universal bias correction for smooth
functionals, computed from the same invariant measure used to construct the
estimator, without the resampling and repeated estimation involved in
bias-correction methods such as the bootstrap and jackknife.

The statistical input is represented by a random one-form, allowing
differentials of log-likelihoods and general random criteria, as well as
estimating functions, to be treated in a common framework. Under local
regularity and localization conditions, we derive second-order stochastic
expansions for averages with respect to these invariant measures and for
decision rules defined through general loss functions. The expansions
separate the contributions of the statistical input, temperature, drift,
and loss, using precontrast geometry and an O-derivative induced by the
loss. The framework includes maximum likelihood and posterior-based
estimators and recovers existing bias-reduction methods based on the choice
of prior or adjustments to estimating equations.

In the likelihood setting, for the Jeffreys posterior $\mu_n$ and its
Fisher--Rao Fr\'echet mean $\widehat z_n$, the corrected estimator
$2\gamma(\widehat z_n)-\mu_n(\gamma)$ has frequentist bias $o(a_n^2)$ for
every fixed smooth functional $\gamma$, where $a_n$ denotes the estimation
rate. For coordinate squared loss, the theory yields a diffusion determined
only by the likelihood and Fisher information. The mean of its invariant
probability measure estimates the parameter with bias $o(a_n^2)$. This
construction remains available even when no bias-reducing prior exists.

Numerical results in the gamma shape--scale model illustrate this bias
reduction.
\end{abstract}

\section{Introduction}
\label{sec:introduction}

Consider a sequence of statistical experiments \((\mathcal X_n,\mathcal A_n,\{P_{n,\theta}:\theta\in\Theta\}),n\ge1,
\) where $\Theta\subset\R^{\mathrm p}$ is a compact convex domain with smooth
boundary. Assume that the true parameter $\theta_0$ lies in the interior
$\Theta^\circ$ of $\Theta$, and write
$\E_{n,\theta}$ for expectation under $P_{n,\theta}$. Suppose that, for each
$n$, the family $\{P_{n,\theta}:\theta\in\Theta\}$ is dominated by a
reference measure $\nu_n$, with density \(  p_{n,\theta}(x_n)
  :=  \frac{\dd P_{n,\theta}}{\dd\nu_n}(x_n)\). Assume that $p_{n,\theta}(x_n)$ is strictly positive and smooth in
$\theta\in\Theta$ for $\nu_n$-almost every $x_n$. For the observed data
$X_n$, define the log likelihood \(  \ell_n(\theta)
  :=  \log p_{n,\theta}(X_n)\). Let $a_n\downarrow0$ denote the convergence rate; in regular i.i.d.\ models, typically $a_n=n^{-1/2}$. We assume that
\begin{equation}
  g_{ij}(\theta)
  :=
  \lim_{n\to\infty}
  a_n^2
  \E_{n,\theta}\!\left[
    \partial_i\ell_n(\theta)\,
    \partial_j\ell_n(\theta)
  \right]
  \label{eq:intro-fisher-metric}
\end{equation}
exists and defines a smooth positive-definite Riemannian metric $g$ on
$\Theta$, where $\partial_i$ denotes differentiation with respect to the
$i$-th standard coordinate. We write $(g^{ij})=(g_{ij})^{-1}$. We call the Riemannian volume measure $\vol$ induced by $g$ the
Jeffreys prior associated with $g$. 

Given a smooth strictly positive prior
density $\pi$ with respect to $\vol$, define the posterior distribution by
\[
  \Pi_n(\dd\theta)
  :=
  \frac{
    e^{\ell_n(\theta)}\pi(\theta)\,\vol(\dd\theta)
  }{
    \displaystyle
    \int_\Theta
    e^{\ell_n(\vartheta)}\pi(\vartheta)\,\vol(\dd\vartheta)
  },
\]
Under regularity conditions, the frequentist bias of the posterior mean
generally satisfies
\[
  \E_{n,\theta_0}\!\left[
    \int_\Theta \theta\,\Pi_n(\dd\theta)
  \right]
  -\theta_0
  =
  O(a_n^2).
\]
A classical approach to reducing this bias is to choose the prior density
$\pi$ so that the leading $a_n^2$-term vanishes; see, for example,
Hartigan~\cite{Hartigan1965}, Sakai, Matsuda and
Kubokawa~\cite{SakaiMatsudaKubokawa2025}, and Miyata and
Yanagimoto~\cite{MiyataYanagimoto2026}. The resulting condition can be
written as a system of first-order partial differential equations
\begin{equation}
  g^{ij}(\theta)\,\partial_j\log\pi(\theta)
  =
  V^{*i}(\theta),
  \qquad
  i=1,\ldots,\mathrm p,
  \label{eq:intro-prior-pde}
\end{equation}
where $V^*$ is a smooth vector field for which the leading
$a_n^2$ term in the coordinate bias vanishes. Hence a prior
satisfying the bias-removal condition can exist only when $V^*$ admits the
gradient representation \eqref{eq:intro-prior-pde}. In multiparameter
models this integrability requirement need not hold; Sakai, Matsuda and
Kubokawa~\cite{SakaiMatsudaKubokawa2025} give examples in which no such
prior exists. 

On the other hand, a Langevin diffusion can be constructed so that the
posterior distribution $\Pi_n$ is its invariant probability measure, and
such diffusions are widely used in MCMC. Consider the reflected Langevin diffusion
\begin{equation}
  \dd\theta_t
  =
  \frac12
  \left\{
    g^{-1}\dd\ell_n(\theta_t)
    +
    g^{-1}\dd\log\pi(\theta_t)
  \right\}\dd t
  +
  \dd B_t
  -\mathbf n(\theta_t)\,\dd K_t,
  \label{eq:intro-posterior-langevin}
\end{equation}
where $B$ is $g$-Brownian motion, $\mathbf n$ is the outward $g$-unit
normal along the boundary $\partial\Theta$, and $K$ is the boundary local time. The term
$-\mathbf n(\theta_t)\,\dd K_t$ represents the reflection that pushes the
process back into $\Theta$ when it hits the boundary. The posterior distribution $\Pi_n$ is an invariant probability measure of
\eqref{eq:intro-posterior-langevin}. Thus, the prior enters the drift through
the vector field $g^{-1}\dd\log\pi$. Equation~\eqref{eq:intro-prior-pde}
therefore suggests replacing this term directly by $V^*$, even when no prior
density $\pi$ satisfying \eqref{eq:intro-prior-pde} exists.

Replace $g^{-1}\dd\log\pi$ in
\eqref{eq:intro-posterior-langevin} by $V^*$. We verify that, when the
resulting reflected diffusion is written in standard coordinates, the process takes the form
\begin{equation}
  \dd\theta_t^i
  =
  \frac12 g^{ij}(\theta_t)\partial_j\ell_n(\theta_t)\,\dd t
  +\sigma^i{}_{\alpha}(\theta_t)\,\dd W_t^\alpha
  -\mathbf n^i(\theta_t)\,\dd K_t,
  \qquad
  \sigma\sigma^{\mathsf T}=g^{-1}.
  \label{eq:intro-natural-gradient-diffusion}
\end{equation}
Here the Einstein summation rule is used, $W=(W^1,\ldots,W^{\mathrm p})$ is
a standard $\R^{\mathrm p}$-valued Brownian motion, and $\mathbf n$ and
$K$ have the same meanings as in \eqref{eq:intro-posterior-langevin}. Let
$\mu_n$ be an invariant probability measure of
\eqref{eq:intro-natural-gradient-diffusion}. We show that, under regularity
conditions,
\begin{equation}
  \E_{n,\theta_0}\!\left[
    \int_\Theta \theta\,\mu_n(\dd\theta)
  \right]
  -\theta_0
  =
  o(a_n^2).
  \label{eq:intro-invariant-mean-bias}
\end{equation}
Unlike \eqref{eq:intro-posterior-langevin},
\eqref{eq:intro-natural-gradient-diffusion} is not constructed as a
sampling algorithm for a prescribed posterior distribution. 

This observation motivates the framework developed in this paper: we begin
with a data-dependent Markov process and develop a general framework for
constructing decision rules using an invariant probability measure of that
process. The invariant probability measure is not specified in advance, and
its explicit form need not be known. This reverses the usual MCMC
perspective, in which a target distribution (such as a posterior
distribution) is specified first and the Markov process is introduced as a
computational device. We develop a second-order asymptotic theory for
decision rules constructed in this way.

We now introduce the general setting of the paper, using the same notation
as above for the corresponding objects in this more general setting. For
each $n\ge1$, let
$(\mathcal X_n,\mathcal A_n,\{P_{n,\theta}:\theta\in\Theta\})$ be a
statistical experiment, and retain the notation $\E_{n,\theta}$ for
expectation under $P_{n,\theta}$. We now allow the parameter space $\Theta$
to be a compact connected smooth manifold with boundary, and assume that the
true parameter $\theta_0$ lies in its interior $\Theta^\circ$.

The statistical input consists of a random smooth one-form $\Psi_n$ on
$\Theta$, a deterministic sequence $a_n\downarrow0$, and a smooth
Riemannian metric $g$ on $\Theta$. In the setting above,
\begin{equation}
  \Psi_n=-a_n^2\dd\ell_n,
  \label{eq:intro-likelihood-specialization}
\end{equation}
and $g$ is the Fisher information metric defined in
\eqref{eq:intro-fisher-metric}. We refer to these specifications as the
likelihood setting. More generally, this one-form formulation
includes general random criteria, by taking $\Psi_n$ to be an appropriately
scaled differential of the criterion, as well as estimating functions after Godambe normalization; see Appendix \ref{app:godambe-normalization}.

Fix a temperature parameter $\tau\ge0$ and a smooth vector field $V$ on
$\Theta$. We consider the normally reflected process
\begin{equation}
  \dd\theta_t
  =
  \frac12\left\{
    -a_n^{-2}g^{-1}\Psi_n(\theta_t)+V(\theta_t)
  \right\}\dd t
  +\sqrt{\tau}\,\dd B_t
  -\mathbf n(\theta_t)\,\dd K_t.
  \label{eq:intro-general-process}
\end{equation}
Here $g^{-1}\Psi_n$ denotes the vector field dual to $\Psi_n$ with respect
to $g$, $B$ is $g$-Brownian motion, $\mathbf n$ is the outward $g$-unit
normal, and $K$ is the boundary local time; the reflection term is omitted
when the boundary is empty. Let $\mu_n$ denote an invariant probability
measure of \eqref{eq:intro-general-process}, and write
$\mu_n(f):=\int_\Theta f(\theta)\,\mu_n(\dd\theta)$. In the likelihood setting, the choice $(\tau,V)=(0,0)$ gives the gradient flow of
$-\ell_n$ with respect to $g$. If an MLE
$\widehat\theta_n^{\mathrm{MLE}}$ lies in $\Theta^\circ$, then
$\delta_{\widehat\theta_n^{\mathrm{MLE}}}$ is an invariant probability
measure. The choice $(\tau,V)=(1,g^{-1}\dd\log\pi)$ gives the posterior
Langevin diffusion \eqref{eq:intro-posterior-langevin}.

Let the decision space $Z$ be a compact connected smooth manifold with
boundary, and let the loss function $W:\Theta\times Z\to\R$ be continuous
such that, for every $\theta\in\Theta$, $W(\theta,\cdot)$ has a unique
minimizer \(  m(\theta)
  :=
  \operatorname*{argmin}_{z\in Z}W(\theta,z)\). We define the decision rule $\widehat z_n$ by
\begin{equation}
  \widehat z_n
  \in
  \operatorname*{argmin}_{z\in Z}
  \int_\Theta W(\theta,z)\,\mu_n(\dd\theta).
  \label{eq:intro-decision-rule}
\end{equation}
For example, if $Z=\Theta\subset\R^{\mathrm p}$ is a compact convex
domain with smooth boundary and $W(\theta,z)=|\theta-z|^2$, then
\begin{equation}
  m(\theta)=\theta,
  \qquad
  \widehat z_n
  =
  \int_\Theta \theta\,\mu_n(\dd\theta).
  \label{eq:intro-coordinate-squared-loss-rule}
\end{equation}

We first derive second-order stochastic expansions of $\mu_n(f)$ and $\gamma(\widehat z_n)$, where $f:\Theta\to\R^{\mathrm q}$ and $\gamma:Z\to\R^{\mathrm q}$ are smooth. These expansions make explicit the effects of $\tau$ and $V$ on
$\mu_n(f)$ and $\gamma(\widehat z_n)$, and of $W$ on
$\gamma(\widehat z_n)$. One consequence of these expansions is that $V$
can be chosen to control the leading terms in the frequentist biases of
$\mu_n(f)$ and $\gamma(\widehat z_n)$, relative to $f(\theta_0)$ and
$\gamma(m(\theta_0))$, respectively. Furthermore, for each $\tau>0$ and
loss $W$, we characterize a condition on $V$ under which
$\tau^{-1}\{\mu_n(\gamma\circ m)-\gamma(\widehat z_n)\}$
approximates the frequentist bias
$\E_{n,\theta_0}[\gamma(\widehat z_n)]-\gamma(m(\theta_0))$
with an $o_{L^1}(a_n^2)$ error. Consequently,
\begin{equation}
  \E_{n,\theta_0}\!\left[
    \gamma(\widehat z_n)
    -\frac1\tau
    \left\{
      \mu_n(\gamma\circ m)-\gamma(\widehat z_n)
    \right\}
  \right]
  -\gamma(m(\theta_0))
  =o(a_n^2).
  \label{eq:intro-loss-bias-correction}
\end{equation}
In particular, when $\tau=1$,
\begin{equation}
  \E_{n,\theta_0}\!\left[
    2\gamma(\widehat z_n)-\mu_n(\gamma\circ m)
  \right]
  -\gamma(m(\theta_0))
  =o(a_n^2).
  \label{eq:intro-unit-temperature-bias-correction}
\end{equation}
This closely parallels bootstrap bias correction: the leading frequentist bias is estimated and subtracted from $\gamma(\widehat z_n)$. Here, however, the bias estimate is obtained from the same invariant probability measure used to construct $\widehat z_n$, rather than by generating new datasets and recomputing the decision rule.

Particularly simple consequences arise in the likelihood setting when
$\tau=1$ and $Z=\Theta$. First, suppose that
$\Theta\subset\R^{\mathrm p}$ is a compact convex domain with smooth
boundary and $W(\theta,z)=|\theta-z|^2$. The corresponding choice of $V$
for \eqref{eq:intro-unit-temperature-bias-correction} is precisely the
vector field $V^*$ appearing in \eqref{eq:intro-prior-pde}. Taking
$\gamma=\operatorname{id}_\Theta$ and using
\eqref{eq:intro-coordinate-squared-loss-rule} in
\eqref{eq:intro-unit-temperature-bias-correction} recovers
\eqref{eq:intro-invariant-mean-bias}, while substituting $V=V^*$ into
\eqref{eq:intro-general-process} yields
\eqref{eq:intro-natural-gradient-diffusion}. Thus, both
\eqref{eq:intro-natural-gradient-diffusion} and
\eqref{eq:intro-invariant-mean-bias} are recovered as special cases of the
general theory.

Second, let $\Theta$ be a compact connected smooth manifold with boundary
and take $W(\theta,z)=d(\theta,z)^2$, where $d$ is the Riemannian distance
induced by $g$. Then $m=\operatorname{id}_\Theta$. The corresponding choice
of $V$ for
\eqref{eq:intro-unit-temperature-bias-correction} is $V=0$. In this case,
the invariant probability measure of \eqref{eq:intro-general-process} is
$\mu_n(\dd\theta)\propto e^{\ell_n(\theta)}\vol(\dd\theta)$, namely
the Jeffreys posterior, and the decision rule $\widehat z_n$ is its
Fisher--Rao Fr\'echet mean.

We next relate the present framework to existing bias-reduction methods. As seen above, at $\tau=1$, the prior-based formulation requires
$V=g^{-1}\dd\log\pi$, and hence imposes an integrability condition on $V$;
the present framework removes this restriction. At zero temperature, the invariant probability measure
may be taken to be a point mass at a root of the corresponding estimating
equation, so that $\mu_n(f)$ reduces to the usual plug-in estimator. In this
case, changing $V$ corresponds to adjusting the estimating equation. For estimation of the parameter itself
in the likelihood setting, the choice of $V$ that removes the leading bias
recovers the adjusted score of Firth~\cite{Firth1993}, which need not arise
from a penalized likelihood~\cite{KosmidisFirth2009}. For a general smooth
estimand, Hirose and Mano~\cite{HiroseMano2026} study bias reduction through
penalized likelihoods, while Kosmidis and Lunardon~\cite{KosmidisLunardon2024}
develop bias-reducing adjustments for general estimating equations. These
results can therefore be viewed as zero-temperature instances of bias
reduction for $\mu_n(f)$ within the present framework.

The second-order expansions also permit direct comparison of decision rules arising from different choices of $\tau$, $V$, and $W$. In the likelihood setting with $\Theta = Z \subset \mathbb{R}^{\mathrm{p}}$ and $W(\theta,z) =|\theta -z|^2$, the choice $(\tau,V)=(1,g^{-1}\dd\log\pi)$ yields the posterior mean, whereas $(\tau,V)=(0,0)$ recovers the MLE when the invariant measure is selected as the Dirac mass at an interior MLE. Comparing their second-order expansions therefore recovers the moment-matching problem of choosing $\pi$ so that the posterior mean and the MLE agree to second order. In regular i.i.d.\ models, Ghosh and Liu~\cite{GhoshLiu2011} study priors for which the posterior mean of the parameter and the MLE agree up to $o_P(n^{-1})$, and Tanaka~\cite{Tanaka2023} gives an information-geometric representation of the corresponding condition. Okudo and Yano~\cite{OkudoYano2026} more generally study pairs of priors under which a posterior mean and a MAP estimator agree to the same order. Sakai, Matsuda and Kubokawa~\cite{SakaiMatsudaKubokawa2025} further show that the condition for a second-order unbiased posterior mean decomposes into a Firth adjustment and a moment-matching contribution. Our comparison result allows $\tau$, $V$, and $W$ to vary simultaneously between the two decision rules, while keeping the same target map $m$, and also applies when $\Psi_n$ is not the differential of a random criterion.

The present framework is also related to the asymptotic theory of likelihood- and Bayes-type estimators. The classical theory of Ibragimov and Has'minskii~\cite{IbragimovHasminskii1981} obtained moment convergence for maximum likelihood and Bayes estimators through an analysis of the likelihood. Extending this likelihood-based analysis to general random criteria, Yoshida~\cite{Yoshida2011,Yoshida2025Simplified} developed quasi-likelihood analysis (QLA) and established moment convergence for M- and Bayes-type estimators. 
Chernozhukov and Hong~\cite{ChernozhukovHong2003} similarly construct a quasi-posterior from a general random criterion and use its means and quantiles as frequentist estimators. General loss functions for Bayes-type estimators have also been considered by Ogihara~\cite{Ogihara2019}. In contrast, our statistical input is a random one-form $\Psi_n$, which need not be the differential of a scalar random criterion, and the vector field $V$ need not be a gradient vector field. As a result, the invariant probability measure $\mu_n$ need not admit a Gibbs-type representation. We derive explicit second-order stochastic expansions for both $\mu_n(f)$ and $\gamma(\widehat z_n)$ and use them to control second-order bias. The required localization of $\mu_n$ is imposed directly as an assumption.

A distinctive feature of our second-order expansions is that they are formulated in information-geometric terms. Eguchi~\cite{Eguchi1992} showed that a contrast function on $\Theta\times\Theta$ induces a Riemannian metric and a pair of affine connections on $T\Theta$. Henmi and Matsuzoe~\cite{HenmiMatsuzoe2011} generalized this framework to a precontrast, a function on $T\Theta\times\Theta$ that abstracts the derivative of a contrast function with respect to its first argument. We use their precontrast geometry to describe the higher-order structure of the statistical input $\Psi_n$.

A further contribution of this paper is to construct a corresponding geometry from the loss $W:\Theta\times Z\to\R$. In general, $\Theta$ and $Z$ are different manifolds, and our construction yields an O-derivative in the sense of Abe~\cite{Abe1988}. This loss geometry describes the loss-dependent terms in the second-order expansion of $\gamma(\widehat z_n)$ and provides the geometric structure underlying the bias correction in \eqref{eq:intro-loss-bias-correction}.

The remainder of the paper is organized as follows. Section~\ref{sec:setup} introduces the statistical experiment, the one-form, the invariant probability measure, and the loss-based decision rule. Section~\ref{sec:precontrast-geometry-assumptions} develops the precontrast and loss geometries and states the assumptions used in the asymptotic analysis. Section~\ref{sec:main-expansion} establishes the second-order stochastic expansions of $\mu_n(f)$ and $\gamma(\widehat z_n)$, together with the resulting bias and comparison formulas. Section~\ref{sec:bias} develops the bias correction. Section~\ref{sec:simulation} presents a numerical experiment for the gamma shape--scale model. The proofs are given in Sections~\ref{sec:proof-main-expansion} and~\ref{sec:proof-remaining-results}, and the Godambe normalization is discussed in Appendix~\ref{app:godambe-normalization}.

\section{Statistical Setup}
\label{sec:setup}

\subsection{Statistical experiment and one-form}

Let $\Theta$ be a compact connected $C^\infty$ manifold with smooth boundary, let $\theta_0\in\Theta^\circ$ denote the true parameter, and set $\mathrm p:=\dim\Theta$. 

For each $n\ge1$, consider the statistical experiment
$(\mathcal X_n,\mathcal A_n,\{P_{n,\theta}:\theta\in\Theta\})$.
The statistical input is a map
\[
  \Psi_n:\mathcal X_n\times\Theta\longrightarrow T^*\Theta,
  \qquad
  \Psi_n(x_n,\theta)\in T_\theta^*\Theta,
\]
such that, for every fixed $\theta\in\Theta$, the map
\[
  (\mathcal X_n,\mathcal A_n)\ni x_n
  \longmapsto \Psi_n(x_n,\theta)\in T_\theta^*\Theta
\]
is measurable, while, for every fixed $x_n\in\mathcal X_n$, the section
$\theta\mapsto\Psi_n(x_n,\theta)$ is a smooth one-form on $\Theta$.
We usually suppress $x_n$ from the notation. Let $a_n\downarrow0$ be a
deterministic sequence, and fix a smooth Riemannian metric $g$ on $\Theta$.
The relation between $g$ and the statistical input $\Psi_n$ is specified
through Assumptions~\textup{[A1]} and~\textup{[A3]} below.

This one-form formulation includes the usual estimating-function setting.
Under the conditions stated in Appendix~\ref{app:godambe-normalization}, an
ordinary $\R^{\mathrm p}$-valued estimation function can be converted into a
random one-form by the Godambe normalization. When $\Psi_n$ is obtained in
this way, $g$ is the corresponding Godambe information.

\subsection{Generator and invariant probability measure}
\label{subsec:generator-invariant-measure}

Fix a smooth vector field $V\in\mathfrak X(\Theta)$ and $\tau\ge0$. Let
$\Delta$ denote the Laplace--Beltrami operator associated with $g$, and let
$\mathbf n$ denote the outward $g$-unit normal vector field along
$\partial\Theta$ when the boundary is nonempty. For
$x_n\in\mathcal X_n$, let
$L_n=L_n(x_n)$ be the operator
\begin{equation}
  L_n f
  :=
  \frac{\tau}{2}\Delta f
  +\frac12\left\{
    -a_n^{-2}g^{-1}\Psi_n(x_n,\cdot)+V
  \right\}(f),
  \label{eq:full-generator}
\end{equation}
acting on smooth functions $f\in C^\infty(\Theta)$ satisfying the Neumann
boundary condition $\mathbf n(f)=0$ on $\partial\Theta$. Here
$g^{-1}\Psi_n$ denotes the vector field dual to $\Psi_n$ with respect to $g$.
The normally reflected Markov process with generator $L_n$ is the process in
\eqref{eq:intro-general-process}. If $\partial\Theta=\varnothing$, the reflection term and boundary
condition are omitted. See
\cite{LionsSznitman1984,Saisho1987,Wang2010Boundary} for standard constructions
of normally reflected processes and their Neumann semigroups.

Let $\mathcal P(\Theta)$ denote the set of Borel probability measures on
$\Theta$, equipped with the weak topology. For every $n$ and the fixed
$(\tau,V)$ above, we assume that there exists a measurable map
\[
  \mathcal X_n\ni x_n
  \longmapsto \mu_n(x_n,\cdot)\in\mathcal P(\Theta)
\]
such that, for every $x_n\in\mathcal X_n$, the probability measure
$\mu_n(x_n,\cdot)$ is invariant for the process with generator
$L_n(x_n)$. We fix one such measurable selection and write $\mu_n$ for the
resulting random probability measure. If $\tau>0$, the invariant probability
measure is unique, so a nontrivial choice is needed only at $\tau=0$.
For a fixed smooth map $f:\Theta\to\R^{\mathrm q}$, write
\begin{equation}
  \mu_n(f):=\int_\Theta f(\theta)\,\mu_n(\dd\theta).
  \label{eq:estimator-definition}
\end{equation}

\subsection{Decision space and decision rule}

Let $Z$ be a compact finite-dimensional smooth manifold with possibly empty
smooth boundary, equipped with its Borel $\sigma$-field, and call $Z$ the
decision space. Fix a continuous function
\[
  W:\Theta\times Z\longrightarrow\R,
\]
which is assumed to be a loss in the sense of
Definition~\ref{def:loss} below. Given the invariant probability
measure $\mu_n$ fixed above, define
\begin{equation}
  Q_n(z):=\int_\Theta W(\theta,z)\,\mu_n(\dd\theta),
  \qquad z\in Z,
  \label{eq:integrated-loss}
\end{equation}
and choose a measurable decision rule $\widehat z_n$ satisfying
\begin{equation}
  \widehat z_n\in\operatorname*{argmin}_{z\in Z}Q_n(z).
  \label{eq:decision-rule}
\end{equation}
Compactness and continuity ensure that the set of minimizers is nonempty and
admits a measurable selection. We fix one such selection; uniqueness of the
minimizer of $Q_n$ is not assumed. If $\mu_n$ is a posterior distribution,
$\widehat z_n$ is the corresponding Bayes rule under $W$.

\section{Geometry and Assumptions}
\label{sec:precontrast-geometry-assumptions}

\subsection{Notation}
\label{subsec:tensor-contraction-notation}

Throughout, all manifolds and all vector bundles are smooth and
finite-dimensional. Manifolds may have boundary. We fix $\mathrm q<\infty$. We use $M,N$ for
manifolds.

For a manifold $M$, we also denote by $\R^{\mathrm q}$ the trivial vector
bundle $M\times\R^{\mathrm q}\to M$ with typical fiber $\R^{\mathrm q}$.
Accordingly, for a vector bundle $E\to M$, the notation
$E\otimes\R^{\mathrm q}$ means $E\otimes(M\times\R^{\mathrm q})$.

Let $E_0,E_1,\ldots,E_r\to M$ be vector bundles. For
\[
  A\in\Gamma(E_1^*\otimes\cdots\otimes E_r^*\otimes E_0),
  \qquad
  B_j\in\Gamma(E_j),\quad j=1,\ldots,r,
\]
we write
\[
  A[B_1,\ldots,B_r]\in\Gamma(E_0)
\]
for the canonical contraction of the $E_j^*$-factor against $B_j$, for each
$j$, in the indicated order. Tensor products of vector bundles may themselves
be taken as the bundles $E_j$.

For a smooth map $f:M\to N$, we regard its differential as
\[
  \dd f\in\Gamma(T^*M\otimes f^*TN).
\]
Thus $\dd f[X]\in\Gamma(f^*TN)$ denotes contraction against
$X\in\mathfrak X(M)$. In particular, if
$f\in C^\infty(M;\R^{\mathrm q})$, then under the canonical trivialization of
$T\R^{\mathrm q}$,
\[
  \dd f\in\Gamma(T^*M\otimes\R^{\mathrm q}).
\]

We use the notion of an O-derivative introduced by Abe~\cite{Abe1988}.

\begin{definition}[O-derivative]
Let $E,F\to M$ be vector bundles and let $P:E\to F$ be a bundle
homomorphism. An O-derivative with principal homomorphism $P$ is an
$\R$-linear map
\[
  D:\Gamma(E)\longrightarrow\Gamma(T^*M\otimes F)
\]
satisfying
\[
  D(as)=\dd a\otimes P(s)+aDs,
  \qquad
  a\in C^\infty(M),\quad s\in\Gamma(E).
\]
For $X\in\mathfrak X(M)$, we use the usual notation
\[
  D_Xs:=(Ds)[X]\in\Gamma(F),
\]
where the right-hand side uses the contraction notation introduced above.
\end{definition}

Ordinary connections on $E$ are precisely the O-derivatives with $F=E$ and
$P=\operatorname{id}_E$.

An O-derivative $D$ with principal homomorphism $P:E\to F$ induces a dual
O-derivative, again denoted by $D$,
\[
  D:\Gamma(F^*)\longrightarrow\Gamma(T^*M\otimes E^*),
\]
defined by
\[
  (D_X\alpha)[s]
  =
  X\!\left\{\alpha[P(s)]\right\}
  -\alpha[D_Xs],
\]
for $\alpha\in\Gamma(F^*)$, $X\in\mathfrak X(M)$, and $s\in\Gamma(E)$.
Its principal homomorphism is $P^*:F^*\to E^*$.

If $D_1,\ldots,D_r$ are O-derivatives with the same principal homomorphism
$P:E\to F$, then any affine combination $\sum_{a=1}^r c_aD_a$, with
$\sum_{a=1}^r c_a=1$, is again an O-derivative with principal homomorphism
$P$. If instead $\sum_{a=1}^r c_a=0$, then $\sum_{a=1}^r c_aD_a$ is
$C^\infty(M)$-linear and is identified with a section of
$T^*M\otimes E^*\otimes F$.

We extend O-derivatives componentwise to $\R^{\mathrm q}$-valued sections.
Namely, for
\[
  s=(s^1,\ldots,s^{\mathrm q})
  \in\Gamma(E\otimes\R^{\mathrm q}),
\]
we set
\[
  Ds:=(Ds^1,\ldots,Ds^{\mathrm q})
  \in\Gamma(T^*M\otimes F\otimes\R^{\mathrm q}).
\]

Let $E\to M$ be a vector bundle equipped with a bundle metric $h$. For
$s_1,s_2\in\Gamma(E)$, we write $\braket{s_1,s_2}_h$ for their pointwise
inner product. We identify $h$ with the bundle isomorphism
\[
  h:E\longrightarrow E^*,
  \qquad
  h(s)=\braket{s,\cdot}_h,
\]
and denote its inverse by $h^{-1}:E^*\to E$. The dual bundle metric on $E^*$
is also denoted by $h^{-1}$. We write the actions of these bundle
isomorphisms by juxtaposition, as $hs$ for $s\in\Gamma(E)$ and
$h^{-1}\alpha$ for $\alpha\in\Gamma(E^*)$.

If $h$ is a bundle metric on $TM$, hence a Riemannian metric on $M$, we write
$d_h$ for the corresponding Riemannian distance and $\nabla^h$ for its
Levi--Civita connection.

We now specialize these conventions to the parameter manifold $\Theta$,
equipped with the Riemannian metric $g$. For inner products induced by $g$ or
$g^{-1}$, we suppress the metric subscript. Thus
\[
  \braket{X,Y}:=\braket{X,Y}_g,
  \qquad
  \braket{\alpha,\beta}:=\braket{\alpha,\beta}_{g^{-1}}
\]
for $X,Y\in\mathfrak X(\Theta)$ and $\alpha,\beta\in\Omega^1(\Theta)$. We also write
\[
  d:=d_g,
  \qquad
  \nabla:=\nabla^g.
\]

For a connection $D$ on $T\Theta$ and $f\in C^\infty(\Theta;\R^{\mathrm q})$, we define
\begin{equation}
  \Delta^D f:=(D\dd f)[g^{-1}].
  \label{eq:D-Laplacian-definition}
\end{equation}
With $\nabla = \nabla^g$, the Laplace--Beltrami operator $\Delta$ introduced in Section~2 is 
\[\Delta  = \Delta^\nabla.\]
For a tensor field
\[
  T\in
  \Gamma\!\left(
    (T\Theta)^{\otimes r}\otimes
    (T^*\Theta)^{\otimes s}
  \right),
\]
we write $|T|$ for the pointwise norm induced by $g$ and $g^{-1}$, and set
\[
  \|T\|_\infty:=\sup_{\theta\in\Theta}|T(\theta)|.
\]

Finally, write $\E_{n,\theta}$ and $\Cov_{n,\theta}$ for expectation and
covariance under $P_{n,\theta}$, respectively. Let
$X_n$ be a random variable taking values in a fixed finite-dimensional vector
space. For fixed $\theta\in\Theta$, the statements
\[
  X_n=O_{\Linftyminus(\theta)}(r_n),
  \qquad
  X_n=o_{\Linftyminus(\theta)}(r_n)
\]
mean, respectively, that for every $p<\infty$,
\[
  \bigl\||X_n|/r_n\bigr\|_{L^p(P_{n,\theta})}=O(1),
  \qquad
  \bigl\||X_n|/r_n\bigr\|_{L^p(P_{n,\theta})}=o(1),
\]
where $|\cdot|$ is any norm on the underlying finite-dimensional vector
space. The definition is independent of the choice of norm.

\subsection{Precontrast geometry}
\label{subsec:precontrast-geometry}
We introduce a deterministic one-form family, called a precontrast, associated with the statistical input $\Psi_n$. Its relation to \(\Psi_n\) is specified in Assumption~\textup{[A1]} below. The notion of a precontrast and the induced connections presented in this subsection are due to Henmi and Matsuzoe~\cite{HenmiMatsuzoe2011}.

\begin{definition}[Precontrast]
\label{def:precontrast}
Suppose that, for each $\theta\in\Theta$, a smooth one-form
$\Psi^\theta\in\Omega^1(\Theta)$ is given and satisfies
\begin{equation}
  \Psi^\theta(\theta)=0.
  \label{eq:diagonal-zero}
\end{equation}
Assume further that the family
$\{\Psi^\theta\}_{\theta\in\Theta}$ defines a smooth map
$\Psi:T\Theta\times\Theta\to\R$ by
\[
  \Psi(\cdot,\theta):=\Psi^\theta,
  \qquad \theta\in\Theta.
\]
If there exists a Riemannian metric $g_\Psi$ on $\Theta$ such that
\[
  \braket{X,Y}_{g_\Psi}(\theta)
  =
  Y\!\left\{\Psi^\theta[X]\right\}(\theta)
\]
for every $X,Y\in\mathfrak X(\Theta)$ and $\theta\in\Theta$, then
$\{\Psi^\theta\}_{\theta\in\Theta}$, or simply $\Psi$, is called a
precontrast.
\end{definition}

By \eqref{eq:diagonal-zero},
$Y\{\Psi^\theta[X]\}(\theta)$ depends only on $X_\theta$ and $Y_\theta$.
Hence, for a precontrast $\Psi$, the Riemannian metric $g_\Psi$ is uniquely
determined by $\Psi$.

Throughout the remainder of the paper, we assume that
$\Psi$ is a precontrast and that the Riemannian metric $g$ fixed in
Section~\ref{sec:setup} satisfies $g=g_\Psi$.

For later use, we introduce notation for derivatives of $\Psi$ with respect
to its two arguments. For $X\in\mathfrak X(\Theta)$, set
\[\Psi(X,\cdot)(\theta_1,\theta_2):=\Psi^{\theta_2}[X](\theta_1).\]
Let $X^{(1)}$ and $Y^{(2)}$ denote the lifts to the first and second factors
of $\Theta\times\Theta$ of $X,Y\in\mathfrak X(\Theta)$, respectively. For
$X_1,\ldots,X_r,Y_1,\ldots,Y_s\in\mathfrak X(\Theta)$, with $r\ge1$ and
$s\ge0$, set
\begin{equation}
\begin{aligned}
  &\Psi(X_1\cdots X_r\mid Y_1\cdots Y_s)(\theta)
  \\
  &\qquad:=
  \left.
  X_1^{(1)}\cdots X_{r-1}^{(1)}
  Y_1^{(2)}\cdots Y_s^{(2)}
  \{\Psi(X_r,\cdot)\}
  \right|_{(\theta,\theta)}.
  \label{eq:precontrast-jet}
\end{aligned}
\end{equation}
When there is no derivative in the second argument, write
$\Psi(X_1\cdots X_r\mid)$.

Since \eqref{eq:diagonal-zero} gives
$\Psi(X,\cdot)(\theta,\theta)=0$, differentiating along the diagonal in the
direction $Y$ yields
\begin{equation}
  \Psi(YX\mid)+\Psi(X\mid Y)=0.
  \label{eq:diagonal-first-identity}
\end{equation}
Consequently,
\begin{equation}
  \braket{X,Y}
  =\Psi(YX\mid)
  =-\Psi(X\mid Y).
  \label{eq:precontrast-metric}
\end{equation}

\begin{definition}
\label{def:precontrast-connections}
Define the two connections $D^{(1)}$ and $D^{(-1)}$ induced by $\Psi$ through
\begin{align}
  \braket{D_{X_1}^{(1)}X_2,Y}
  &:=-\Psi(X_1X_2\mid Y),
  \label{eq:D1-definition}
  \\
  \braket{X,D_{Y_1}^{(-1)}Y_2}
  &:=-\Psi(X\mid Y_1Y_2).
  \label{eq:Dminus1-definition}
\end{align}
Set
\begin{equation}
  D^{(0)}
  :=\frac12\{D^{(1)}+D^{(-1)}\}.
  \label{eq:D0-definition}
\end{equation}
\end{definition}

Differentiating \eqref{eq:diagonal-first-identity} once more along the diagonal
gives
\begin{equation}
  X_1\braket{X_2,X_3}
  =\braket{D_{X_1}^{(1)}X_2,X_3}
   +\braket{X_2,D_{X_1}^{(-1)}X_3}.
  \label{eq:precontrast-duality}
\end{equation}
Thus $D^{(1)}$ and $D^{(-1)}$ are dual with respect to $g$. Moreover,
$D^{(-1)}$ is torsion-free. If each one-form $\Psi^\theta$ is closed, then
$D^{(1)}$ is also torsion-free; in particular, this holds if each $\Psi^\theta$
is exact. Whenever $D^{(1)}$ is torsion-free,
\begin{equation}
  D^{(0)}=\nabla.
  \label{eq:D0-equals-nabla}
\end{equation}
With the notation in \eqref{eq:D-Laplacian-definition}, we abbreviate
$\Delta^{D^{(1)}}$ and $\Delta^{D^{(-1)}}$ by $\Delta^{(1)}$ and
$\Delta^{(-1)}$, respectively.

\subsection{Loss geometry}
\label{subsec:loss-induced-geometry}

We use the following notion of a loss.
\begin{definition}[Loss]
\label{def:loss}
Let $W:\Theta\times Z\to\R$ be a continuous function. Suppose that, for every
$\theta\in\Theta$, the minimizer is unique:
\begin{equation}
  \operatorname*{argmin}_{z\in Z}W(\theta,z)=\{m(\theta)\},
  \qquad
  m(\Theta^\circ)\subset Z^\circ.
  \label{eq:loss-target}
\end{equation}
Assume that $W$ is $C^\infty$ on a neighborhood of
$\operatorname{Graph}(m)$, and that the Hessian of $W(\theta,\cdot)$ at
$m(\theta)$ is positive definite for every $\theta\in\Theta$. Then $W$ is
called a loss.
\end{definition}

The Hessian appearing in Definition~\ref{def:loss} is intrinsically defined.
We first note that $m$ is continuous. Indeed, let $\theta_k\to\theta$. By
compactness of $Z$, every subsequence of $m(\theta_k)$ has a further
subsequence converging to some $z\in Z$. Since
\[
  W(\theta_k,m(\theta_k))
  \le W(\theta_k,m(\theta)),
\]
continuity of $W$ gives $W(\theta,z)\le W(\theta,m(\theta))$ along the
convergent subsequence. By uniqueness of the minimizer, $z=m(\theta)$.
Hence $m(\theta_k)\to m(\theta)$.

For $\theta\in\Theta^\circ$, we have $m(\theta)\in Z^\circ$, and therefore
\[
  \bigl(\dd W(\theta,\cdot)\bigr)_{m(\theta)}=0.
\]
Since $m$ is continuous, $W$ is smooth near $\operatorname{Graph}(m)$, and
$\Theta^\circ$ is dense in $\Theta$, this identity extends to every
$\theta\in\Theta$. Thus $m(\theta)$ is a critical point of
$W(\theta,\cdot)$ for every $\theta$, so its Hessian at $m(\theta)$ is
intrinsically defined. We denote this Hessian by $g_W(\theta)$. Its
nondegeneracy and the implicit function theorem give
$m\in C^\infty(\Theta;Z)$; consequently $g_W$ is a bundle metric on
$m^*TZ$.

Throughout the remainder of the paper, we assume that the continuous function \(W\) fixed in Section~\ref{sec:setup} is a loss in the sense of Definition~\ref{def:loss}.

In parallel with the notation introduced for the precontrast above, we use
the following notation for derivatives of $W$ with respect to its two
arguments. For $X\in\mathfrak X(\Theta)$ and $Y\in\mathfrak X(Z)$, let
$X^{(1)}$ and $Y^{(2)}$ denote their lifts to the first and second factors
of $\Theta\times Z$, respectively. For
$X_1,\ldots,X_r\in\mathfrak X(\Theta)$ and
$Y_1,\ldots,Y_s\in\mathfrak X(Z)$, with $r,s\ge0$, set
\begin{equation}
  W(X_1\cdots X_r\mid Y_1\cdots Y_s)(\theta)
  :=
  \left.
  X_1^{(1)}\cdots X_r^{(1)}
  Y_1^{(2)}\cdots Y_s^{(2)}W
  \right|_{(\theta,m(\theta))}.
  \label{eq:loss-jet}
\end{equation}
When there is no derivative in the first or second argument, write
$W(\mid Y_1\cdots Y_s)$ or $W(X_1\cdots X_r\mid)$, respectively. For
$Y\in\mathfrak X(Z)$, write $Y\circ m\in\Gamma(m^*TZ)$ for the section
$(Y\circ m)_\theta:=Y_{m(\theta)}$. The stationarity established above and
the definition of $g_W$ give
\begin{equation}
  W(\mid Y)=0,
  \qquad
  W(\mid Y_1Y_2)
  =\braket{Y_1\circ m,Y_2\circ m}_{g_W}.
  \label{eq:loss-stationarity}
\end{equation}
For $Y_1,Y_2\in\Gamma(m^*TZ)$ we use the same notation:
\[
  W(\mid Y_1Y_2):=\braket{Y_1,Y_2}_{g_W}.
\]

For $Y\in\Gamma(m^*TZ)$ and $\theta\in\Theta$, choose a vector field
$\widetilde Y$ on $Z$ satisfying
$\widetilde Y_{m(\theta)}=Y_\theta$, and set
\[
  W(X_1\cdots X_r\mid Y)(\theta)
  :=
  W(X_1\cdots X_r\mid\widetilde Y)(\theta).
\]
This value is independent of the chosen extension. Since
$W(\mid\widetilde Y)=0$, differentiating this identity at $\theta$ in the
direction $X$ gives
\begin{equation}
\begin{aligned}
  W(X\mid\widetilde Y)(\theta)
  +W\!\left(\mid\dd m[X]\,(\widetilde Y\circ m)\right)(\theta)
  &=
  W(X\mid Y)(\theta)
  +W(\mid\dd m[X]\,Y)(\theta) \\ &  =   W(X\mid Y)(\theta)
  +\braket{\dd m[X],Y}_{g_W}(\theta) \\ &
  =0.
  \label{eq:loss-first-identity}
\end{aligned}
\end{equation}

\begin{definition}
\label{def:loss-induced-derivative}
Define
\begin{equation}
\begin{aligned}
  D^W&:\Gamma(T\Theta)
       \longrightarrow\Gamma(T^*\Theta\otimes m^*TZ)
\end{aligned}
\end{equation}
by
\begin{equation}
  \braket{D_{X_1}^WX_2,Y}_{g_W}
  :=-W(X_1X_2\mid Y)
  \label{eq:loss-derivative-definition}
\end{equation}
for $X_1,X_2\in\mathfrak X(\Theta)$ and $Y\in\Gamma(m^*TZ)$.
\end{definition}

First, for $f\in C^\infty(\Theta)$,
\[
  \braket{D_{fX_1}^WX_2,Y}_{g_W}
  =-W((fX_1)X_2\mid Y)
  =-fW(X_1X_2\mid Y)
  =f\braket{D_{X_1}^WX_2,Y}_{g_W}.
\]
Since $g_W$ is nondegenerate, $D^W$ is $C^\infty(\Theta)$-linear in $X_1$;
hence, for each $X_2\in\mathfrak X(\Theta)$,
\[
  D^W X_2\in\Gamma(T^*\Theta\otimes m^*TZ).
\]

Next, \eqref{eq:loss-first-identity} and
\eqref{eq:loss-derivative-definition} give
\begin{align*}
  \braket{D_{X_1}^W(fX_2),Y}_{g_W}
  &=-W(X_1(fX_2)\mid Y)\\
  &=-X_1(f)W(X_2\mid Y)-fW(X_1X_2\mid Y)\\
  &=\braket{X_1(f)\,\dd m[X_2]+fD_{X_1}^WX_2,Y}_{g_W}.
\end{align*}
Again by the nondegeneracy of $g_W$,
\begin{equation}
  D_{X_1}^W(fX_2)
  =X_1(f)\,\dd m[X_2]+fD_{X_1}^WX_2.
  \label{eq:loss-O-derivative-Leibniz}
\end{equation}
Therefore $D^W$ is an O-derivative with principal homomorphism
$\dd m:T\Theta\to m^*TZ$.

For $\gamma\in C^\infty(Z;\R^{\mathrm q})$, define
\[
  \dd\gamma\circ m
  \in\Gamma(m^*T^*Z\otimes\R^{\mathrm q}),
  \qquad
  (\dd\gamma\circ m)(\theta):=\dd\gamma_{m(\theta)}.
\]
Using the dual O-derivative, define
\begin{equation}
  \Delta^W\gamma
  :=\bigl(D^W(\dd\gamma\circ m)\bigr)[g^{-1}],
  \qquad
  \Delta^W:
  C^\infty(Z;\R^{\mathrm q})
  \longrightarrow
  C^\infty(\Theta;\R^{\mathrm q}).
  \label{eq:loss-Laplacian-definition}
\end{equation}

\begin{example}
\label{ex:squared-distance-loss}
Let $Z=\Theta$, let $h$ be a smooth Riemannian metric on $\Theta$, and let
$d_h$ and $\nabla^h$ denote the Riemannian distance and the Levi--Civita
connection induced by $h$, respectively.
Assume that $d_h^2$ is smooth on a neighborhood of
$\{(\theta,\theta):\theta\in\partial\Theta\}$ in
$\Theta\times\Theta$; this condition is vacuous when
$\partial\Theta=\varnothing$. Since $d_h^2$ is smooth on a neighborhood
of $(\theta,\theta)$ for every $\theta\in\Theta^\circ$,
\[
  W(\theta,z):=d_h(\theta,z)^2
\]
satisfies the conditions of Definition~\ref{def:loss}: its unique
minimizer is $m(\theta)=\theta$, and
$\operatorname{Hess}_z W(\theta,\theta)=2h_\theta$.
The decision $\widehat z_n$ is a Fr\'echet mean of $\mu_n$ with respect
to $d_h$, and
\begin{equation}
  m=\operatorname{id}_\Theta,\qquad
  g_W=2h,\qquad
  D^W=\nabla^h,\qquad
  \Delta^W=\Delta^{\nabla^h}.
  \label{eq:distance-loss-geometry}
\end{equation}
In particular, if $h=g$, then $\Delta^W=\Delta$.
\end{example}

\subsection{Assumptions}
\label{subsec:local-assumptions}
\label{subsec:localization-assumption}

Choose a chart
$x=(x^1,\ldots,x^{\mathrm p})$ around $\theta_0$ and write
$\partial_i:=\partial/\partial x^i$.

\begin{namedassumption}{A1}
\begin{align}
  \max_{1\le i\le \mathrm p}
  \left|\Psi_n[\partial_i](\theta_0)\right|
  &=O_{\Linftyminus(\theta_0)}(a_n),
  \label{eq:zeroth-jet-rate}
  \\
  \max_{1\le i,j\le \mathrm p}
  \left|
    \partial_i\{\Psi_n[\partial_j]\}(\theta_0)
    -\braket{ \partial_i,\partial_j}(\theta_0)
  \right|
  &=o_{\Linftyminus(\theta_0)}(a_n^{1/2}).
  \label{eq:first-jet-rate}
\end{align}
Moreover, for every $p<\infty$,
\begin{equation}
\begin{aligned}
  \lim_{r\downarrow0}\limsup_{n\to\infty}
  \max_{1\le i,j,k\le \mathrm p}
  \Bigg\|
    \sup_{d(\theta,\theta_0)\le r}
    \Big|
      \partial_i\partial_j\{\Psi_n[\partial_k]\}(\theta)
      -\partial_i\partial_j\{\Psi^{\theta_0}[\partial_k]\}(\theta)
    \Big|
  \Bigg\|_{L^p(P_{n,\theta_0})}
  =0.
  \label{eq:second-jet-local-rate}
\end{aligned}
\end{equation}
\end{namedassumption}

\begin{definition}
\label{def:local-scaled-jets}
Define the random smooth tensor fields
\(
  \zeta_n\in\mathfrak X(\Theta)
\)
and
\(
  \Upsilon_n\in\Gamma(T\Theta\otimes T^*\Theta)
\)
by
\begin{equation}
  \zeta_n
  :=-a_n^{-1}g^{-1}\Psi_n,
  \label{eq:zeta-definition}
\end{equation}
and
\begin{equation}
  \braket{ \Upsilon_n[X],Y}
  :=\braket{ X,Y}-(D^{(1)}\Psi_n)[X,Y],
  \qquad X,Y\in\mathfrak X(\Theta).
  \label{eq:Z-definition}
\end{equation}
\end{definition}

By \eqref{eq:zeroth-jet-rate} and \eqref{eq:first-jet-rate},
\begin{equation}
  \zeta_n(\theta_0)=O_{\Linftyminus(\theta_0)}(1),
  \qquad
  \Upsilon_n(\theta_0)=o_{\Linftyminus(\theta_0)}(a_n^{1/2}).
  \label{eq:zeta,Z-order}
\end{equation}

For $r>0$, write
\[
  B_\Theta(\theta_0,r):=\{\theta\in\Theta:d(\theta,\theta_0)<r\}.
\]
The localization condition below is a condition on the selected
sequence of invariant probability measures. To make this dependence
explicit, we write it as Assumption~\textup{[A2]}$(\mu_n)$. the choice of $(\tau,V)$ enters this condition only through $\mu_n$ for the fixed
statistical experiment, true parameter $\theta_0$, 1-form $\Psi_n$, rate $a_n$, and metric
$g$.

\begin{namedassumption}[$(\mu_n)$]{A2}
There exists $\kappa\in(0,1)$ such that, for every $L>0$,
\begin{equation}
  \E_{n,\theta_0}\!\left[
    \mu_n\!\left(
      B_\Theta(\theta_0,a_n^{\kappa})^c
    \right)
  \right]=O(a_n^L).
  \label{eq:localization-assumption}
\end{equation}
\end{namedassumption}

Let $\beta\in\Omega^1(\Theta)$ and let $D^{\mathrm B}$ be a connection,
appearing in the asymptotic conditions below.
\begin{namedassumption}{A3}
In $T_{\theta_0}^*\Theta$,
\begin{equation}
  a_n^{-2}\E_{n,\theta_0}[\Psi_n(\theta_0)]
  \longrightarrow \beta(\theta_0).
  \label{eq:beta-definition}
\end{equation}
For any smooth vector fields $X_1,X_2,X_3$ defined near $\theta_0$,
\begin{align}
  a_n^{-2}\Cov_{n,\theta_0}\!\left\{
    \Psi_n[X_1](\theta_0),
    \Psi_n[X_2](\theta_0)  \right\}
  &\longrightarrow
  \braket{X_1,X_2}(\theta_0),
  \label{eq:covariance-normalization}
  \\
  a_n^{-2}\Cov_{n,\theta_0}\!\left\{
    X_1(\Psi_n[X_2])(\theta_0),
    \Psi_n[X_3](\theta_0)
  \right\}
  &\longrightarrow
  \braket{D_{X_1}^{\mathrm B}X_2,X_3}(\theta_0).
  \label{eq:DB-definition}
\end{align}
\end{namedassumption}

The fact that the metrics in \eqref{eq:first-jet-rate} and \eqref{eq:covariance-normalization} both coincide with $g$ is ensured by the Godambe normalization established in Proposition~\ref{prop:godambe-normalization}. The Godambe normalization alone does not imply either $\beta=0$ or $D^{\mathrm B}=D^{(1)}$.

For a $(1,2)$-tensor $C$, let $\operatorname{tr}C$ denote the contraction of
its contravariant index with its first covariant index; thus, in local
coordinates, $(\operatorname{tr}C)_j=C^i{}_{ij}$. Define the smooth vector
field
\begin{equation}
  b:=g^{-1}\left\{
    \beta+\operatorname{tr}(D^{(1)}-D^{\mathrm B})
  \right\}\in\mathfrak X(\Theta).
  \label{eq:b-definition}
\end{equation}
The vector field \(b\) collects the contributions of \(\beta\) and \(D^{\mathrm B}-D^{(1)}\) that enter the second-order bias; see Proposition~\ref{prop:bias-expansion}.

\begin{namedassumption}{L}
There exists an open neighborhood $O$ of $\theta_0$ such that, for every
$n\ge1$ and $\theta_1,\theta_2\in O$, the $T^*_{\theta_1}\Theta$-valued random
variable $\Psi_n(\theta_1)$ is $P_{n,\theta_2}$-integrable. For each
$\theta\in O$, define the one-form on $O$
\begin{equation}
  \overline\Psi_n^{\theta}
  :=\E_{n,\theta}[\Psi_n].
  \label{eq:expected-one-form}
\end{equation}
Assume that each $\overline\Psi_n^{\theta}$ is smooth and satisfies
\begin{equation}
  \overline\Psi_n^{\theta}(\theta)=0,
  \qquad \theta\in O.
  \label{eq:likelihood-diagonal-centering}
\end{equation}
Assume further that the family
$\{\overline\Psi_n^{\theta}\}_{\theta\in O}$ defines a smooth map
$\overline\Psi_n:TO\times O\to\R$ by
\begin{equation}
  \overline\Psi_n(\cdot,\theta):=\overline\Psi_n^{\theta},
  \qquad \theta\in O.
  \label{eq:expected-one-form-family}
\end{equation}
Define $\overline\Psi_n(\,\cdot\mid\cdot\,)$ using the same convention as in
\eqref{eq:precontrast-jet}. For all smooth vector fields $X_1,X_2,X_3$ on $O$,
\begin{align}
  \overline\Psi_n(X_1\mid X_2)(\theta_0)
  &=-a_n^{-2}\Cov_{n,\theta_0}\!\left\{
    \Psi_n[X_1](\theta_0),\Psi_n[X_2](\theta_0)
  \right\},
  \label{eq:likelihood-score-covariance}
  \\
  \overline\Psi_n(X_1X_2\mid X_3)(\theta_0)
  &=-a_n^{-2}\Cov_{n,\theta_0}\!\left\{
    X_1(\Psi_n[X_2])(\theta_0),\Psi_n[X_3](\theta_0)
  \right\}.
  \label{eq:likelihood-mixed-covariance}
\end{align}
Finally, for every smooth vector field $X_1,X_2,X_3$ on $O$,
\begin{equation}
  \overline\Psi_n(X_1X_2\mid X_3)(\theta_0)
  \longrightarrow
  \Psi(X_1X_2\mid X_3)(\theta_0).
  \label{eq:likelihood-mixed-jet}
\end{equation}
\end{namedassumption}

\begin{proposition}[Likelihood simplification]
\label{prop:likelihood-bartlett-compatibility}
Under Assumptions~\textup{[L]} and \textup{[A1]}, Assumption~\textup{[A3]}
holds with $b=0$. More precisely,
$\beta=0$ and $D^{\mathrm B}=D^{(1)}$.
\end{proposition}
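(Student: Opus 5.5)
We verify the three limits required in [A3]: \eqref{eq:beta-definition} with $\beta=0$, \eqref{eq:covariance-normalization}, and \eqref{eq:DB-definition} with $D^{\mathrm B}=D^{(1)}$. Once $\beta=0$ and $D^{\mathrm B}=D^{(1)}$ are established, $b=0$ follows at once from \eqref{eq:b-definition}, since $\operatorname{tr}(D^{(1)}-D^{\mathrm B})=0$.

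\emph{Step 1: $\beta=0$.} By \eqref{eq:expected-one-form} and \eqref{eq:likelihood-diagonal-centering} with $\theta=\theta_0\in O$, we have $\E_{n,\theta_0}[\Psi_n(\theta_0)]=\overline\Psi_n^{\theta_0}(\theta_0)=0$ for every $n$. Hence $a_n^{-2}\E_{n,\theta_0}[\Psi_n(\theta_0)]=0\to0$, and \eqref{eq:beta-definition} holds with $\beta(\theta_0)=0$.

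\emph{Step 2: covariance normalization.} By \eqref{eq:likelihood-score-covariance},
\[
  a_n^{-2}\Cov_{n,\theta_0}\{\Psi_n[X_1](\theta_0),\Psi_n[X_2](\theta_0)\}
  =-\overline\Psi_n(X_1\mid X_2)(\theta_0).
\]
Differentiating the diagonal identity $\overline\Psi_n(X_1,\cdot)(\theta,\theta)=0$, which is valid on $O$, in the direction $X_2$ gives the analogue of \eqref{eq:diagonal-first-identity}:
\[
  -\overline\Psi_n(X_1\mid X_2)(\theta_0)=\overline\Psi_n(X_2X_1\mid)(\theta_0)=X_2\{\E_{n,\theta_0}\Psi_n[X_1]\}(\theta_0).
\]
The main obstacle is to identify this quantity with $\E_{n,\theta_0}[X_2(\Psi_n[X_1])(\theta_0)]$, that is, to interchange differentiation in the first argument with the expectation under the fixed measure $P_{n,\theta_0}$. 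I would justify this by the smoothness of $\overline\Psi_n^{\theta_0}$ assumed in [L]. If that is not enough, I would sidestep the interchange: \eqref{eq:first-jet-rate} gives $X_2(\Psi_n[X_1])(\theta_0)\to\braket{X_1,X_2}(\theta_0)$ in every $L^p$, so the expectation converges to $\braket{X_1,X_2}(\theta_0)$. To connect this with the derivative of the mean, I would use a difference-quotient argument with uniform integrability coming from \eqref{eq:second-jet-local-rate}, which controls the second derivatives near $\theta_0$ in $L^p$ and therefore makes the first-order Taylor remainder uniformly integrable. Either route yields \eqref{eq:covariance-normalization} with limit $\braket{X_1,X_2}(\theta_0)=\braket{X_2,X_1}(\theta_0)$.

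\emph{Step 3: $D^{\mathrm B}=D^{(1)}$.} By \eqref{eq:likelihood-mixed-covariance} and then \eqref{eq:likelihood-mixed-jet},
\[
  a_n^{-2}\Cov_{n,\theta_0}\{X_1(\Psi_n[X_2])(\theta_0),\Psi_n[X_3](\theta_0)\}
  =-\overline\Psi_n(X_1X_2\mid X_3)(\theta_0)\longrightarrow-\Psi(X_1X_2\mid X_3)(\theta_0).
\]
By the definition \eqref{eq:D1-definition}, this limit equals $\braket{D^{(1)}_{X_1}X_2,X_3}(\theta_0)$. Therefore \eqref{eq:DB-definition} holds with $D^{\mathrm B}=D^{(1)}$ at $\theta_0$, and so $\operatorname{tr}(D^{(1)}-D^{\mathrm B})$ vanishes where it enters. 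Combined with Step 1, this gives $b=0$.
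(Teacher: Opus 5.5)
Your proposal is correct and follows essentially the same route as the paper: $\beta=0$ from diagonal centering, the diagonal identity plus the covariance formula in [L] with the derivative of the mean identified via a difference quotient, and $D^{\mathrm B}=D^{(1)}$ read off from the mixed-covariance and mixed-jet conditions. One caveat: smoothness of $\overline\Psi_n^{\theta_0}$ alone does not justify exchanging differentiation and expectation, so you should drop that first justification and use your fallback, the difference-quotient argument dominated by the local $L^1$ bound on second derivatives from \eqref{eq:second-jet-local-rate}. That is exactly what the paper does, in coordinates and then extended to general vector fields by tensoriality.
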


\subsection{Euclidean setting}
\label{subsec:euclidean-setting}

We invoke the following Euclidean setting only when stated explicitly.

\begin{namedsetting}{E}
Suppose that $\Theta\subset\R^{\mathrm p}$ is a compact domain with smooth
boundary. By abuse of notation, we use
$\theta=(\theta^1,\ldots,\theta^{\mathrm p})$ both for a point of
$\Theta$ and for the standard coordinate map
$\operatorname{id}_\Theta:\Theta\to\R^{\mathrm p}$.

Whenever Setting~\textup{[E]} is in force, we use the canonical
identifications
\[
  T_\theta\Theta\simeq\R^{\mathrm p},
  \qquad
  T_\theta^*\Theta\simeq(\R^{\mathrm p})^*,
\]
and identify $\zeta_n$, $\Upsilon_n$, $g^{-1}$, and $V$ with their coordinate
representations. For a connection $D$ on $T\Theta$, let $\Gamma^D$ denote
its Christoffel coefficients in the standard coordinates. For a
$(2,0)$-tensor $S$, write
\[
  \Gamma^D[S]
  :=\bigl(\Gamma^{D,k}_{ij}S^{ij}\bigr)_{k=1}^{\mathrm p}.
\]
We abbreviate
\[
  \Gamma^{(1)}:=\Gamma^{D^{(1)}},
  \qquad
  \Gamma^{(-1)}:=\Gamma^{D^{(-1)}}.
\]
\end{namedsetting}

\section{Second-Order Stochastic Expansion}
\label{sec:main-expansion}

Define the random differential operator
\begin{equation}
  \mathcal M_n:
  C^\infty(\Theta;\R^{\mathrm q})
  \longrightarrow C^\infty(\Theta;\R^{\mathrm q})
  \label{eq:expansion-operator-domain}
\end{equation}
by
\begin{equation}
\begin{aligned}
  \mathcal M_n f
  :={}& f+a_n\,\dd f[\zeta_n+\Upsilon_n[\zeta_n]]
  \\
  &+a_n^2\Bigg\{
    \dd f[V]
    +\left(\tau\Delta+\frac{1-\tau}{2}\Delta^{(-1)}\right)f
    +\frac12(D^{(-1)}\dd f)
       [\zeta_n^{\otimes2}-g^{-1}]
  \Bigg\}.
  \label{eq:expansion-operator}
\end{aligned}
\end{equation}

\begin{theorem}
\label{thm:main-expansion}
Under Assumption~\textup{[A1]} and
Assumption~\textup{[A2]}$(\mu_n)$, for every fixed
$f\in C^\infty(\Theta;\R^{\mathrm q})$ and
$\gamma\in C^\infty(Z;\R^{\mathrm q})$,
\begin{align}
  \mu_n(f)
  &=(\mathcal M_nf)(\theta_0)
    +o_{\Linftyminus(\theta_0)}(a_n^2),
  \label{eq:main-expansion}\\
  \gamma(\widehat z_n)
  &=\bigl(\mathcal M_n(\gamma\circ m)\bigr)(\theta_0)
    -\frac{\tau a_n^2}{2}(\Delta^W\gamma)(\theta_0)
    +o_{\Linftyminus(\theta_0)}(a_n^2).
  \label{eq:loss-explicit-expansion}
\end{align}
\end{theorem}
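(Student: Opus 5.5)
The plan is to exploit invariance directly: for every smooth $h$ satisfying the Neumann condition, $\mu_n(L_nh)=0$. Because $\theta_0\in\Theta^\circ$ and Assumption~[A2]$(\mu_n)$ puts all but $O(a_n^L)$ of the mass of $\mu_n$ in $B_\Theta(\theta_0,a_n^\kappa)$, I will use test functions of the form $h=\chi\cdot P$. Here $\chi$ is a fixed cutoff equal to $1$ near $\theta_0$ and supported in the chart, so the Neumann condition holds trivially. $P$ is a polynomial of degree at most $3$ in the centered coordinates $x-x(\theta_0)$.

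\textbf{Step 1 (a priori moments).} I first aim for $\mu_n(|x-x(\theta_0)|^k\chi)=O_{\Linftyminus(\theta_0)}(a_n^k)$ for every $k$. Write $u:=x-x(\theta_0)$. Apply invariance to $h=\chi|u|_{g(\theta_0)}^{2k}$. Taylor-expand $\Psi_n$ about $\theta_0$, using \eqref{eq:zeroth-jet-rate} and \eqref{eq:first-jet-rate} for the zeroth and first jets and \eqref{eq:second-jet-local-rate} for the second jet, the latter uniformly on $B_\Theta(\theta_0,a_n^\kappa)$. The dominant drift term is then $-a_n^{-2}k|u|^{2k}$, coming from $\braket{\partial_i,\partial_j}$. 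All other terms are bounded by $a_n^{-1}|\zeta_n(\theta_0)||u|^{2k-1}$, $(1+\tau)|u|^{2k-2}$, and $a_n^{-2}(a_n^{1/2}+a_n^{\kappa})|u|^{2k}$. Young's inequality plus induction on $k$ should then give the bound. The contribution off the ball, and the mass near $\partial\Theta$ where $\chi$ is not constant, are $O(a_n^L)$ by [A2]. The same argument controls $\widehat z_n$ later.

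\textbf{Step 2 (moment equations).} Next take $h=\chi u^k$, $h=\chi u^ku^l$ and $h=\chi u^ku^lu^m$. For $h=\chi u^k$, invariance yields $\mu_n\bigl((g^{-1}\Psi_n)^k\bigr)=a_n^2\mu_n\bigl(V^k+\tau\Delta u^k\bigr)$ up to $O(a_n^L)$. Expanding $g^{-1}\Psi_n$ to second order around $\theta_0$ expresses the first moment $\mu_n(u)$ through three pieces:
\begin{itemize}[nosep]
\item $a_n\zeta_n(\theta_0)$ and its $\Upsilon_n$-correction;
\item the second moments $\mu_n(u\otimes u)$, contracted with the second jet of $\Psi^{\theta_0}$, i.e.\ with $D^{(1)}$ and $D^{(-1)}$ Christoffel data through \eqref{eq:D1-definition}, \eqref{eq:Dminus1-definition};
\item $a_n^2(V+\tau\Delta x)$.
\end{itemize}
The quadratic test function gives $\mu_n(u\otimes u)=a_n^2\{\zeta_n^{\otimes2}+\tau g^{-1}\}(\theta_0)+o(a_n^2)$. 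The cubic one gives third moments $O(a_n^3)$, which is needed only as a remainder. Every error is $o_{\Linftyminus}(a_n^2)$ by Step 1, \eqref{eq:zeta,Z-order}, and \eqref{eq:second-jet-local-rate}.

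Substituting these into a second-order Taylor expansion of $f$ in the coordinates $u$ gives $\mu_n(f)$. The cubic Taylor remainder is $O(a_n^3)$ by Step 1. I then regroup coordinate expressions into invariant ones. The terms $\partial_{ij}f\,\mu_n(u^iu^j)$ combine with the Christoffel pieces from Step 2 into $(D^{(-1)}\dd f)[\zeta_n^{\otimes2}-g^{-1}]$ together with the Laplacian combination $\tau\Delta+\tfrac{1-\tau}{2}\Delta^{(-1)}$. This is a tensorial check, using $\Delta=\Delta^\nabla$ and $\nabla=\tfrac12(D^{(1)}+D^{(-1)})$ only where torsion permits. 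It yields \eqref{eq:main-expansion}.

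\textbf{Step 3 (decision rule).} First localize $\widehat z_n$ near $m(\theta_0)$. By uniqueness of the minimizer, positivity of $g_W$, smoothness of $W$ near $\operatorname{Graph}(m)$, and [A2], $Q_n$ is uniformly close to $W(\theta_0,\cdot)$ up to $O(a_n^\kappa)$. Hence $\widehat z_n$ lies in $Z^\circ$ near $m(\theta_0)$ outside an event of probability $O(a_n^L)$, and there the first-order condition $\mu_n\bigl(\partial_zW(\cdot,\widehat z_n)\bigr)=0$ holds. Expand this in $(\theta,\widehat z_n)$ around $(\theta_0,m(\theta_0))$ to second order, using $W(\mid Y)=0$, \eqref{eq:loss-first-identity} and \eqref{eq:loss-derivative-definition}, and solve by one Newton step with the moments of Step 2. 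The difference between $\gamma(\widehat z_n)$ and $\mu_n(\gamma\circ m)$ at order $a_n^2$ then comes only from the fluctuation part $\tau a_n^2g^{-1}$ of the second moment. That part is contracted against $W(X_1X_2\mid Y)$ and the second derivatives of $\gamma$ and $m$, and it assembles into $-\tfrac{\tau a_n^2}{2}(\Delta^W\gamma)(\theta_0)$ via the dual O-derivative. The $\zeta_n^{\otimes2}$ part cancels because it mimics a point mass at $\theta_0+a_n\zeta_n$, for which the two quantities agree. Applying \eqref{eq:main-expansion} to $f=\gamma\circ m$ then gives \eqref{eq:loss-explicit-expansion}.

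\textbf{Main obstacle.} I expect the main difficulty to be Step 1, the a priori moment bound. It must hold in $\Linftyminus$ uniformly, including at $\tau=0$ where $\mu_n$ is an arbitrary invariant selection. The only control of $\Psi_n$ away from $\theta_0$ is the in-probability local condition \eqref{eq:second-jet-local-rate} together with the localization radius $a_n^\kappa$. I would handle this by splitting on the event where the local sup in \eqref{eq:second-jet-local-rate} is small and bounding the complement via Hölder's inequality and [A2].
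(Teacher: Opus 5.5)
Your Steps 1--2 follow essentially the paper's route for \eqref{eq:main-expansion}. Both proofs use a Lyapunov bound for $(1+|u|^2)^M$ on a good event where the random perturbation of the Ornstein--Uhlenbeck drift is small, followed by invariance tested against cut-off polynomials. The paper organizes the linear and quadratic moment equations through the Poisson operator $\calP$ (with $\calP u=2u$) and one iteration, while you solve them directly; the two are equivalent. One technical correction: a single fixed cutoff $\chi$ is not enough. The reason is that \eqref{eq:second-jet-local-rate} controls $\Psi_n$ in $L^p$ only on balls whose radius may depend on $p$. You need $\sup_{\supp\chi}|\Psi_n|\in L^{2p}$ to make the off-ball term $\mu_n(\mathbf 1_{O_n^c}L_nh)$ of order $a_n^L$ in $L^p$. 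The paper uses a $p$-dependent cutoff $\chi_p$ for exactly this reason.

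The genuine gap is in Step 3, where you assert that, on the good event, the first-order condition $\mu_n(\partial_zW(\cdot,\widehat z_n))=0$ holds. A loss is only continuous on $\Theta\times Z$ and smooth near $\operatorname{Graph}(m)$. Assumption~[A2]$(\mu_n)$ makes $\mu_n(B_\Theta(\theta_0,a_n^\kappa)^c)$ small, not zero. For $\theta$ far from $\theta_0$, $W(\theta,\cdot)$ may be non-differentiable at points near $m(\theta_0)$. Hence $Q_n$ need not be differentiable at $\widehat z_n$, its minimizer need not be unique, and knowing that $\widehat z_n$ is close to $m(\theta_0)$ gives you no stationarity equation to linearize.

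The paper avoids this by never differentiating $Q_n$. It fixes a small ball $O$ and a convex chart $G\ni m(\theta_0)$ satisfying \eqref{eq:decision-local-Hessian-lower-bound} and \eqref{eq:decision-local-gap}. It then moves the $\mu_n$-mass of $O^c$ to $\theta_0$ to obtain $\widetilde\mu_n$, so that $\widetilde Q_n$ is smooth and strictly convex on $G$ with a unique minimizer $\widetilde z_n\in G$. Finally, the bound $\sup_Z|Q_n-\widetilde Q_n|\le2\|W\|_\infty\mu_n(O^c)=:\varepsilon_n$ and quadratic growth give $|\widehat z_n-\widetilde z_n|^2\le4\varepsilon_n/c$ on $\{\varepsilon_n<c/2\}$. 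Hence $\gamma(\widehat z_n)-\gamma(\widetilde z_n)=O_{\Linftyminus(\theta_0)}(a_n^L)$, and your Newton step should be carried out for $\widetilde Q_n$ and $\widetilde z_n$.

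After that repair, your point-mass argument for the cancellation of the $\zeta_n^{\otimes2}$ part is a legitimate alternative to the paper's device. The paper instead expands around $m(\widetilde\theta_n)$ with $\widetilde\theta_n=\exp_{\theta_0}(a_n\widetilde\mu_n(\bar u_n))$, which leaves only the centered second moment $\tau g_0^{-1}$. To be rigorous, your argument must be stated as an identity between the coefficients of the second-order expansion in the first two moments. That identity must hold for every measure obeying your moment bounds, not be used as a heuristic.
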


The additional moment conditions in \textup{[A3]} identify the expectations
of the random terms in this expansion. Define the deterministic differential
operator
$\calB:C^\infty(\Theta;\R^{\mathrm q})\to C^\infty(\Theta;\R^{\mathrm q})$ by
\begin{equation}
  \calB f
  :=\dd f[V-b]
    +\tau\Delta f
    +\frac{1-\tau}{2}\Delta^{(-1)}f.
  \label{eq:general-bias-formula}
\end{equation}
The operator $\calB$ is independent of the loss $W$ and its target map $m$.

\begin{proposition}
\label{prop:bias-expansion}
Suppose that Assumptions~\textup{[A1]} and~\textup{[A3]} hold. Then
\begin{align}
  \E_{n,\theta_0}\!\left[
    (\zeta_n+\Upsilon_n[\zeta_n])(\theta_0)
  \right]
  &=-a_n b(\theta_0)+o(a_n),
  \label{eq:scaled-jet-first-moment}\\
  \E_{n,\theta_0}\!\left[
    \zeta_n(\theta_0)^{\otimes2}
  \right]
  &=g^{-1}(\theta_0)+o(1).
  \label{eq:scaled-jet-second-moment}
\end{align}
Consequently, if Assumption~\textup{[A2]}$(\mu_n)$ also holds,
then, for every fixed
$f\in C^\infty(\Theta;\R^{\mathrm q})$ and
$\gamma\in C^\infty(Z;\R^{\mathrm q})$,
\begin{align}
  \E_{n,\theta_0}[\mu_n(f)]-f(\theta_0)
  &=a_n^2(\calB f)(\theta_0)+o(a_n^2),
  \label{eq:bias-expansion}\\
  \E_{n,\theta_0}[\gamma(\widehat z_n)]-\gamma(m(\theta_0))
  &=a_n^2\left\{
      \calB(\gamma\circ m)-\frac{\tau}{2}\Delta^W\gamma
    \right\}(\theta_0)+o(a_n^2).
  \label{eq:loss-bias-expansion}
\end{align}
\end{proposition}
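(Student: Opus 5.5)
We prove the two moment identities by direct computation at $\theta_0$, and then take expectations in Theorem~\ref{thm:main-expansion}.

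For \eqref{eq:scaled-jet-second-moment}, write $\zeta_n(\theta_0)=-a_n^{-1}g^{-1}\Psi_n(\theta_0)$. Then
\[
  \E[\zeta_n^{\otimes2}]
  =a_n^{-2}\,g^{-1}\otimes g^{-1}\bigl\{\Cov(\Psi_n,\Psi_n)+\E\Psi_n\otimes\E\Psi_n\bigr\}.
\]
By \eqref{eq:beta-definition}, $\E\Psi_n(\theta_0)=O(a_n^2)$, so the squared-mean term is $O(a_n^2)$. By \eqref{eq:covariance-normalization}, the covariance term converges to $g^{-1}gg^{-1}=g^{-1}$.

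For \eqref{eq:scaled-jet-first-moment}, split into two terms. The linear term is
\[
  \E\zeta_n=-a_n^{-1}g^{-1}\E\Psi_n(\theta_0)=-a_n g^{-1}\beta(\theta_0)+o(a_n).
\]
For the quadratic term $\E\Upsilon_n[\zeta_n]$, fix a local frame and use \eqref{eq:Z-definition}. We have $(D^{(1)}\Psi_n)[X,Y]=X(\Psi_n[Y])-\Psi_n[D^{(1)}_XY]$, and $\Upsilon_n(\theta_0)$ is a random field of size $o(a_n^{1/2})$. Hence
\[
  \E\braket{\Upsilon_n[\zeta_n],Y}
  =-\E\bigl\{\braket{X,Y}-X(\Psi_n[Y])+\Psi_n[D^{(1)}_XY]\bigr\}\big|_{X=\zeta_n}.
\]
Expand $\zeta_n=\zeta_n^ie_i$ with $\zeta_n^i=-a_n^{-1}g^{ij}\Psi_n[e_j]$. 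Each product expectation equals the covariance plus the product of means. The means are small: $\E\Psi_n=O(a_n^2)$, $\E\zeta_n=O(a_n)$, and $\E\Upsilon_n(\theta_0)=o(a_n^{1/2})$ by \eqref{eq:zeta,Z-order}. So the mean-product contributions are $o(a_n^{3/2})$; they are certainly $o(a_n)$ once combined with the $O(a_n)$ mean of $\zeta_n$.

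The covariance part is then
\[
  a_n^{-1}g^{ij}\bigl\{-\Cov(e_j\text{-component},\,e_i(\Psi_n[Y]))+\Cov(\Psi_n[e_j],\Psi_n[D^{(1)}_{e_i}Y])\bigr\}.
\]
By \eqref{eq:DB-definition} and \eqref{eq:covariance-normalization}, it equals
\[
  a_n\,g^{ij}\bigl\{-\braket{D^{\mathrm B}_{e_i}Y,e_j}+\braket{D^{(1)}_{e_i}Y,e_j}\bigr\}+o(a_n)
  =a_n\operatorname{tr}(D^{(1)}-D^{\mathrm B})[Y]+o(a_n).
\]
The index bookkeeping and sign conventions must be checked so that the contraction matches $\operatorname{tr}$ as defined before \eqref{eq:b-definition}, and so that the $\braket{X,Y}$ term cancels against the $\Psi_n$ term rather than leaving an extra contribution. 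Applying $g^{-1}$ and adding the linear term gives $-a_nb(\theta_0)$ with the sign fixed by \eqref{eq:b-definition}.

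For the consequences, apply $\E_{n,\theta_0}$ to \eqref{eq:main-expansion} and \eqref{eq:loss-explicit-expansion}. The remainder is $o_{\Linftyminus}(a_n^2)$, so its expectation is $o(a_n^2)$. The first-order term contributes
\[
  a_n\,\dd f[\E(\zeta_n+\Upsilon_n[\zeta_n])]=-a_n^2\,\dd f[b]+o(a_n^2).
\]
The term $\tfrac12(D^{(-1)}\dd f)[\E\zeta_n^{\otimes2}-g^{-1}]$ is $o(1)$, hence contributes $o(a_n^2)$ after the $a_n^2$ factor. The deterministic pieces reproduce $\calB f$. The same computation with $f=\gamma\circ m$, plus the extra term $-\tfrac{\tau}{2}\Delta^W\gamma$, gives \eqref{eq:loss-bias-expansion}.

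The main obstacle is the covariance bookkeeping in $\E\Upsilon_n[\zeta_n]$. The key point is that the moment convergences in \textup{[A3]} only supply limits of covariances, while the mean-product terms need the rates in \textup{[A1]}. Integrability of all products of these random variables follows from the $\Linftyminus$ bounds in \eqref{eq:zeta,Z-order}.
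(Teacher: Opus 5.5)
Your route is the paper's. You compute at $\theta_0$ in a frame, split each product expectation into a covariance plus a product of means, take the covariance limits from \textup{[A3]} and the mean rates from \textup{[A1]}, and then take expectations in Theorem~\ref{thm:main-expansion}. The second-moment identity and the ``consequently'' part are handled correctly.

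The first-moment computation, however, contains a sign error that you then override rather than resolve. By \eqref{eq:Z-definition},
\[
  \braket{\Upsilon_n[X],Y}=\braket{X,Y}-X(\Psi_n[Y])+\Psi_n[D^{(1)}_XY],
\]
with no overall minus sign. The leading $-$ in your display for $\E\braket{\Upsilon_n[\zeta_n],Y}$ is therefore spurious, and your covariance part has the wrong sign. With $\zeta_n^i=-a_n^{-1}g^{ij}\Psi_n[e_j]$, and everything evaluated at $\theta_0$, the correct covariance part is
\begin{align*}
  &a_n^{-1}g^{ij}\bigl\{\Cov\bigl(\Psi_n[e_j],e_i(\Psi_n[Y])\bigr)
   -\Cov\bigl(\Psi_n[e_j],\Psi_n[D^{(1)}_{e_i}Y]\bigr)\bigr\}\\
  &\qquad=a_n g^{ij}\braket{(D^{\mathrm B}-D^{(1)})_{e_i}Y,e_j}+o(a_n)
   =-a_n\operatorname{tr}(D^{(1)}-D^{\mathrm B})[Y]+o(a_n).
\end{align*}
Adding $\E\zeta_n=-a_ng^{-1}\beta+o(a_n)$ then gives $-a_ng^{-1}\{\beta+\operatorname{tr}(D^{(1)}-D^{\mathrm B})\}+o(a_n)=-a_nb+o(a_n)$.

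Your version yields $+a_n\operatorname{tr}(D^{(1)}-D^{\mathrm B})[Y]$ instead. Adding the linear term would then give $-a_ng^{-1}\{\beta-\operatorname{tr}(D^{(1)}-D^{\mathrm B})\}$, which is not $-a_nb$. Your closing phrase ``with the sign fixed by \eqref{eq:b-definition}'' therefore asserts the conclusion instead of deriving it. The sign of the trace term is precisely the content of \eqref{eq:scaled-jet-first-moment}, so this needs to be fixed.

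Your remaining worry about the $\braket{X,Y}$ term is already settled by your own grouping. The deterministic part of $\Upsilon_n$ has zero covariance with $\zeta_n$. The full mean product is $\E\Upsilon_{n,0}[\E\zeta_{n,0}]=o(a_n^{1/2})\cdot O(a_n)=o(a_n)$, because $\E\Upsilon_{n,0}=o(a_n^{1/2})$ already encodes the cancellation of $\braket{X,Y}$ against $\E X(\Psi_n[Y])$. The paper expands $\Upsilon_{n,0}$ in $g_0$-orthonormal coordinates, and there the same fact appears as the cancellation $-\beta_i+\beta_i$ between $-a_n^{-2}\E[\Psi_{n,i,0}]$ and $a_n^{-2}\sum_j\E[(\partial_j\Psi_{n,i})_0]\,\E[\Psi_{n,j,0}]$. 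With the sign corrected, your argument coincides with the paper's.
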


We next compare two choices $(\tau_j,V_j,W_j)$, $j=1,2$, while
keeping the statistical input $\Psi_n$, the rate $a_n$, and the metric $g$
fixed, and hence the same precontrast geometry. Let
$W_j:\Theta\times Z\to\R$ be a loss, let $\tau_j\ge0$, and let
$V_j\in\mathfrak X(\Theta)$. Assume
that $W_1$ and $W_2$ have the same target map $m:\Theta\to Z$. Let
$L_n^{(j)}$ be the operator \eqref{eq:full-generator} determined by
$(\tau_j,V_j)$, let $\mu_n^{(j)}$ be the selected invariant probability
measure, and let $\widehat z_n^{(j)}$ be the decision rule
\eqref{eq:decision-rule} determined by $(W_j,\mu_n^{(j)})$. We write
$\mathcal M_n^{(j)}$ and $\calB^{(j)}$ for the corresponding operators
defined above.
The difference is independent of $b$:
\begin{equation}
\begin{aligned}
  (\calB^{(1)}-\calB^{(2)})f
  &=\dd f[V_1-V_2]
    +(\tau_1-\tau_2)
      \left(\Delta-\frac12\Delta^{(-1)}\right)f,\\
  (\mathcal M_n^{(1)}-\mathcal M_n^{(2)})f
  &=a_n^2(\calB^{(1)}-\calB^{(2)})f.
  \label{eq:estimator-comparison-operator}
\end{aligned}
\end{equation}
In particular, this operator difference does not require
Assumption~\textup{[A3]}. If $D^{(1)}$ is torsion-free, then
$\Delta-\tfrac12\Delta^{(-1)}=\tfrac12\Delta^{(1)}$.

\begin{corollary}
\label{cor:loss-temperature-drift-comparison}
 Assume Assumption~\textup{[A1]} together with
Assumptions~\textup{[A2]}$(\mu_n^{(1)})$ and
\textup{[A2]}$(\mu_n^{(2)})$. For every fixed $f\in C^\infty(\Theta;\R^{\mathrm q})$ and
$\gamma\in C^\infty(Z;\R^{\mathrm q})$,
\begin{align}
  \mu_n^{(1)}(f)-\mu_n^{(2)}(f)
  &=a_n^2\{(\calB^{(1)}-\calB^{(2)})f\}(\theta_0)
    +o_{\Linftyminus(\theta_0)}(a_n^2),
  \label{eq:integral-design-comparison}\\
  \gamma(\widehat z_n^{(1)})-\gamma(\widehat z_n^{(2)})
  &=a_n^2\Bigl\{
      (\calB^{(1)}-\calB^{(2)})(\gamma\circ m)
-\frac12\bigl(
      \tau_1\Delta^{W_1}\gamma-\tau_2\Delta^{W_2}\gamma
    \bigr)\Bigr\}(\theta_0)
 +o_{\Linftyminus(\theta_0)}(a_n^2).
  \label{eq:general-loss-design-comparison}
\end{align}
\end{corollary}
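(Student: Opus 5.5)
This corollary follows by applying Theorem~\ref{thm:main-expansion} separately to each of the two choices $(\tau_j,V_j,W_j)$ and subtracting. The statistical input $\Psi_n$, the rate $a_n$ and the metric $g$ are common to both choices. Hence the random fields $\zeta_n$ and $\Upsilon_n$ of Definition~\ref{def:local-scaled-jets} are identical for $j=1,2$, and so is the precontrast geometry $D^{(\pm1)}$, $\Delta^{(-1)}$. Assumption~\textup{[A1]} involves only $\Psi_n$, $a_n$ and $g$, so it holds for both choices. Assumption~\textup{[A2]}$(\mu_n^{(j)})$ is assumed for each $j$. Theorem~\ref{thm:main-expansion} therefore applies to each $j$ and gives
\[
  \mu_n^{(j)}(f)=(\mathcal M_n^{(j)}f)(\theta_0)+o_{\Linftyminus(\theta_0)}(a_n^2),
\]
\[
  \gamma(\widehat z_n^{(j)})=\bigl(\mathcal M_n^{(j)}(\gamma\circ m)\bigr)(\theta_0)-\frac{\tau_j a_n^2}{2}(\Delta^{W_j}\gamma)(\theta_0)+o_{\Linftyminus(\theta_0)}(a_n^2).
\]
The second expansion uses that $W_1$ and $W_2$ share the target map $m$, so $\gamma\circ m$ is the same function in both.

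Next I verify the operator identity \eqref{eq:estimator-comparison-operator} directly from \eqref{eq:expansion-operator}. In $\mathcal M_n^{(1)}f-\mathcal M_n^{(2)}f$ the following terms cancel exactly, with no stochastic input:
\begin{itemize}
  \item the zeroth-order term $f$;
  \item the first-order term $a_n\,\dd f[\zeta_n+\Upsilon_n[\zeta_n]]$;
  \item the term $\tfrac12 a_n^2(D^{(-1)}\dd f)[\zeta_n^{\otimes2}-g^{-1}]$.
\end{itemize}
What remains is
\[
  a_n^2\Bigl\{\dd f[V_1-V_2]+(\tau_1-\tau_2)\Delta f-\tfrac{\tau_1-\tau_2}{2}\Delta^{(-1)}f\Bigr\}.
\]
In $\calB^{(1)}-\calB^{(2)}$ the $b$-terms cancel, so the same expression results. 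This gives \eqref{eq:estimator-comparison-operator} without using \textup{[A3]}.

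For the final remark, $D^{(1)}$ torsion-free gives $\nabla=\tfrac12(D^{(1)}+D^{(-1)})$ by \eqref{eq:D0-equals-nabla}. Since $\Delta^D$ is affine in $D$, it follows that $\Delta=\tfrac12\Delta^{(1)}+\tfrac12\Delta^{(-1)}$.

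Subtracting the two expansions and inserting the operator identity gives \eqref{eq:integral-design-comparison} and \eqref{eq:general-loss-design-comparison}. The loss terms combine to $-\tfrac{a_n^2}{2}(\tau_1\Delta^{W_1}\gamma-\tau_2\Delta^{W_2}\gamma)(\theta_0)$. The difference of two $o_{\Linftyminus(\theta_0)}(a_n^2)$ remainders is again of that order, by Minkowski's inequality in each $L^p$.

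I expect no real obstacle. The only point needing care is that both expansions are evaluated at the same random realization, so the random terms cancel pathwise and not merely in expectation. This holds because $\zeta_n$ and $\Upsilon_n$ are built from $\Psi_n$ alone, independently of $(\tau_j,V_j,W_j)$ and of the selections $\mu_n^{(j)}$ and $\widehat z_n^{(j)}$.
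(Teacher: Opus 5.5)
Your proposal is correct and follows the paper's proof. You apply Theorem~\ref{thm:main-expansion} to each choice $(\tau_j,V_j,W_j)$ and subtract. The identity \eqref{eq:estimator-comparison-operator} handles the common terms, and the shared target map $m$ makes the expansion terms differ by exactly $(\mathcal M_n^{(1)}-\mathcal M_n^{(2)})(\gamma\circ m)$, leaving only the two loss-Laplacian terms. The paper states the operator identity and the torsion-free remark without proof; your direct verifications of both are correct.
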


The leading terms in both expansions are deterministic. Since the
remainders are $o_{L^1(P_{n,\theta_0})}(a_n^2)$, taking expectations gives
the corresponding bias comparisons without Assumption~\textup{[A3]}.

\subsection{Coordinate formulas}
\label{subsec:coordinate-formulas}

\begin{corollary}
\label{cor:coordinate-expansion}
Suppose Setting~\textup{[E]} holds. Under Assumption~\textup{[A1]} and
Assumption~\textup{[A2]}$(\mu_n)$,
\begin{equation}
\begin{aligned}
  \mu_n(\theta)
  ={}&\theta_0+a_n\{\zeta_n+\Upsilon_n[\zeta_n]\}(\theta_0)\\
  &+a_n^2\Bigg\{
      V-\left(\tau\Gamma^\nabla
             +\frac{1-\tau}{2}\Gamma^{(-1)}\right)[g^{-1}]
      -\frac12\Gamma^{(-1)}[\zeta_n^{\otimes2}-g^{-1}]
    \Bigg\}(\theta_0)+o_{\Linftyminus(\theta_0)}(a_n^2).
  \label{eq:euclidean-parameter-expansion}
\end{aligned}
\end{equation}
If Assumption~\textup{[A1]} and
Assumptions~\textup{[A2]}$(\mu_n^{(1)})$ and
\textup{[A2]}$(\mu_n^{(2)})$ hold, then
\begin{equation}
\begin{aligned}
  &\mu_n^{(1)}(\theta)-\mu_n^{(2)}(\theta)\\
  &\quad=a_n^2\left\{
      V_1-V_2-(\tau_1-\tau_2)
        \left(\Gamma^\nabla-\frac12\Gamma^{(-1)}\right)[g^{-1}]
    \right\}(\theta_0)
    +o_{\Linftyminus(\theta_0)}(a_n^2).
  \label{eq:estimator-comparison-coordinate}
\end{aligned}
\end{equation}
If $D^{(1)}$ is torsion-free, the connection coefficient in parentheses is
$\tfrac12\Gamma^{(1)}$.
If Assumption~\textup{[A3]} also holds, then
\begin{equation}
\begin{aligned}
  \E_{n,\theta_0}[\mu_n(\theta)]-\theta_0
  =a_n^2\Bigg[
      V-b-\left\{
        \tau\Gamma^\nabla
        +\frac{1-\tau}{2}\Gamma^{(-1)}
      \right\}[g^{-1}]
    \Bigg](\theta_0)
    +o(a_n^2).
  \label{eq:coordinate-bias-expansion}
\end{aligned}
\end{equation}
In particular, if
\begin{equation}
  V
  =
  b+\left\{
    \tau\Gamma^\nabla
    +\frac{1-\tau}{2}\Gamma^{(-1)}
  \right\}[g^{-1}],
  \label{eq:coordinate-second-order-unbiased-drift}
\end{equation}
then
\begin{equation}
  \E_{n,\theta_0}[\mu_n(\theta)]-\theta_0=o(a_n^2).
  \label{eq:coordinate-second-order-unbiased}
\end{equation}
\end{corollary}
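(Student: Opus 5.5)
The corollary follows by specializing Theorem~\ref{thm:main-expansion}, \eqref{eq:estimator-comparison-operator}, Corollary~\ref{cor:loss-temperature-drift-comparison}, and Proposition~\ref{prop:bias-expansion} to $f=\operatorname{id}_\Theta=(\theta^1,\ldots,\theta^{\mathrm p})$. I will then express each differential operator in standard coordinates. The core computation is to evaluate the operators appearing in $\mathcal M_n$ and $\calB$ on the coordinate functions $\theta^k$.

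Since $\dd\theta^k$ is the constant coframe, $\dd\theta[X]=X$ for every vector field $X$. For a connection $D$ with Christoffel symbols $\Gamma^D$ in standard coordinates, the dual connection on one-forms gives
\[
  (D\dd\theta^k)[\partial_i,\partial_j]=-\Gamma^{D,k}_{ij}.
\]
Indeed, $(D_{\partial_i}\dd\theta^k)[\partial_j]=\partial_i\delta^k_j-\dd\theta^k[D_{\partial_i}\partial_j]$. Contracting with a $(2,0)$-tensor $S$ yields $(D\dd\theta)[S]=-\Gamma^D[S]$. In particular,
\[
  \Delta^D\theta=-\Gamma^D[g^{-1}],\qquad
  \Delta\theta=-\Gamma^\nabla[g^{-1}],\qquad
  \Delta^{(-1)}\theta=-\Gamma^{(-1)}[g^{-1}],\qquad
  (D^{(-1)}\dd\theta)[\zeta_n^{\otimes2}-g^{-1}]=-\Gamma^{(-1)}[\zeta_n^{\otimes2}-g^{-1}].
\]
Substituting these into \eqref{eq:expansion-operator} and applying \eqref{eq:main-expansion} componentwise with $\mathrm q=\mathrm p$ gives \eqref{eq:euclidean-parameter-expansion}.

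For the comparison formula \eqref{eq:estimator-comparison-coordinate}, I apply \eqref{eq:integral-design-comparison} with $f=\theta$. By \eqref{eq:estimator-comparison-operator},
\[
  (\calB^{(1)}-\calB^{(2)})\theta=V_1-V_2-(\tau_1-\tau_2)\Bigl(\Gamma^\nabla-\tfrac12\Gamma^{(-1)}\Bigr)[g^{-1}],
\]
using the identities above. For the torsion-free remark, \eqref{eq:D0-equals-nabla} gives $\Gamma^\nabla=\tfrac12(\Gamma^{(1)}+\Gamma^{(-1)})$, and hence $\Gamma^\nabla-\tfrac12\Gamma^{(-1)}=\tfrac12\Gamma^{(1)}$. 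Both Christoffel families are linear in the connection, so this identity holds at the level of coefficients.

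Under [A3], I apply \eqref{eq:bias-expansion} with $f=\theta$. From \eqref{eq:general-bias-formula},
\[
  \calB\theta=V-b-\Bigl\{\tau\Gamma^\nabla+\tfrac{1-\tau}{2}\Gamma^{(-1)}\Bigr\}[g^{-1}],
\]
which gives \eqref{eq:coordinate-bias-expansion}. The choice of $V$ in \eqref{eq:coordinate-second-order-unbiased-drift} makes this bracket vanish at $\theta_0$, and therefore \eqref{eq:coordinate-second-order-unbiased} follows.

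The only delicate point is the sign and index convention in $(D\dd\theta^k)[\partial_i,\partial_j]$. The first slot is the differentiating direction, but because $S=g^{-1}$ and $S=\zeta_n^{\otimes 2}$ are symmetric, the ordering of indices against a possibly non-symmetric $\Gamma^{(1)}$ does not matter for these contractions. In the torsion-free remark, only full coefficient arrays appear, so the ordering is not an issue there either. Beyond this, the argument is direct substitution.
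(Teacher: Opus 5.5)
Your proposal is correct and follows the paper's own proof: it specializes Theorem~\ref{thm:main-expansion}, Corollary~\ref{cor:loss-temperature-drift-comparison}, and \eqref{eq:general-bias-formula} to the coordinate map, using $\Delta\theta=-\Gamma^\nabla[g^{-1}]$ and $(D^{(-1)}\dd\theta)[S]=-\Gamma^{(-1)}[S]$. You also spell out the dual-connection computation and the torsion-free identity $\Gamma^\nabla-\tfrac12\Gamma^{(-1)}=\tfrac12\Gamma^{(1)}$ via \eqref{eq:D0-equals-nabla}, both of which the paper leaves implicit.
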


\subsection{Relation to existing results and examples}
\label{subsec:relations-examples}

\subsubsection{First-order asymptotics}

At first order, Theorem~\ref{thm:main-expansion} together with
\eqref{eq:zeta,Z-order} gives
\begin{align}
  a_n^{-1}\{\mu_n(f)-f(\theta_0)\}
  &=\dd f[\zeta_n](\theta_0)
      +o_{\Linftyminus(\theta_0)}(1),
  \label{eq:integral-first-order-expansion}\\
  a_n^{-1}\{\gamma(\widehat z_n)-\gamma(m(\theta_0))\}
  &=\dd(\gamma\circ m)[\zeta_n](\theta_0)
      +o_{\Linftyminus(\theta_0)}(1).
  \label{eq:loss-first-order-expansion}
\end{align}
The leading terms have the familiar first-order form from likelihood,
quasi-likelihood, and Bayes-type asymptotics. In likelihood and
quasi-likelihood settings, Ibragimov--Has'minskii theory
\cite{IbragimovHasminskii1981} and QLA
\cite{Yoshida2011,Yoshida2025Simplified} develop such first-order
asymptotics together with moment convergence under their own localization
and large-deviation conditions. Ogihara~\cite{Ogihara2019} treats
general-loss Bayes-type estimators, including first-order equivalence and
moment convergence, while Abraham and Cadre~\cite{AbrahamCadre2004} study
asymptotic Bayes actions on a decision space distinct from the parameter
space.

Here the $L^{\infty-}$ control is obtained under
Assumption~\textup{[A2]}$(\mu_n)$, which is imposed directly. Theorem~\ref{thm:main-expansion}
retains the explicit second-order terms beyond the first-order structure,
and Proposition~\ref{prop:bias-expansion} identifies their expectations
under the additional moment conditions in \textup{[A3]}.

\subsubsection{Bias reduction}

Consider the likelihood case
\eqref{eq:intro-likelihood-specialization} under Setting~\textup{[E]}.
For fixed $\tau\ge0$ and $V\in\mathfrak X(\Theta)$, assume
Assumptions~\textup{[A1]} and \textup{[L]} and
Assumption~\textup{[A2]}$(\mu_n)$.
Proposition~\ref{prop:likelihood-bartlett-compatibility} then gives
Assumption~\textup{[A3]} with $b=0$. Hence
Corollary~\ref{cor:coordinate-expansion} shows that the leading coordinate
bias vanishes when
\[
  V
  =
  \left\{
    \tau\Gamma^\nabla
    +\frac{1-\tau}{2}\Gamma^{(-1)}
  \right\}[g^{-1}].
\]

At zero temperature, this becomes
\begin{equation}
  V
  =
  \frac12\Gamma^{(-1)}[g^{-1}].
  \label{eq:zero-temperature-bias-correcting-field}
\end{equation}
In regular i.i.d.\ likelihood models, this gives the
information-geometric form of Firth's mean-bias-reducing score
adjustment~\cite{Firth1993}. For
$f\in C^\infty(\Theta;\R^{\mathrm q})$, the zero-temperature
specialization of \eqref{eq:bias-expansion} contains the bias-reduction
formulas studied by Hirose and Mano~\cite{HiroseMano2026}. More generally, the zero-temperature
specialization of \eqref{eq:coordinate-second-order-unbiased-drift} is
consistent with the bias-reducing adjustment framework for estimation
equations of Kosmidis and Lunardon~\cite{KosmidisLunardon2024}.

At unit temperature, the corresponding choice is
\begin{equation}
  V
  =
  \Gamma^\nabla[g^{-1}].
  \label{eq:unit-temperature-coordinate-bias-correcting-field}
\end{equation}
To relate this choice to prior-based bias reduction, fix a smooth positive
reference measure $\nu$ and write
\[
  \pi_J:=\frac{\dd\vol}{\dd\nu}.
\]
Let $\Pi_{\mathrm{BR}}$ be a prior with smooth positive density
$\pi_{\mathrm{BR}}:=\dd\Pi_{\mathrm{BR}}/\dd\nu$. Note that
$\pi_{\mathrm{BR}}/\pi_J=\dd\Pi_{\mathrm{BR}}/\dd\vol$.
Under the assumptions above at every interior true parameter, the posterior
mean with respect to $\Pi_{\mathrm{BR}}$ is second-order unbiased if and only if
\begin{equation}
  g^{-1}\dd\log\frac{\pi_{\mathrm{BR}}}{\pi_J}
  =
  \Gamma^\nabla[g^{-1}].
  \label{eq:posterior-mean-bias-reducing-prior}
\end{equation}
Equation~\eqref{eq:posterior-mean-bias-reducing-prior} is the
information-geometric form of the asymptotically unbiased-prior condition
originating with Hartigan~\cite{Hartigan1965} and subsequently developed by
Sakai, Matsuda and Kubokawa~\cite{SakaiMatsudaKubokawa2025} and by Miyata and
Yanagimoto~\cite{MiyataYanagimoto2026}. 

\subsubsection{MLE, posterior mean and MAP estimator}
Continue in the likelihood case under Setting~\textup{[E]}, and suppose
$D^{(1)}$ is torsion-free. Retain the reference measure $\nu$ and the
notation \(  \pi_J = \dd \vol/\dd \nu\) from the preceding discussion.
We represent the three estimators as follows.

For the MLE, take
\[
  (\tau_{\mathrm{MLE}},V_{\mathrm{MLE}})=(0,0),
\]
and let $\mu_n^{\mathrm{MLE}}$ be the selected invariant Dirac measure at an
interior MLE $\widehat\theta_n^{\mathrm{MLE}}$. Then
\[
  \widehat\theta_n^{\mathrm{MLE}}
  =\mu_n^{\mathrm{MLE}}(\theta).
\]

For the posterior mean, let $\Pi_{\mathrm{PM}}$ be a prior with smooth
positive density
\[
  \pi_{\mathrm{PM}}:=\frac{\dd\Pi_{\mathrm{PM}}}{\dd\nu}.
\]
Since
$\pi_{\mathrm{PM}}/\pi_J=\dd\Pi_{\mathrm{PM}}/\dd\vol$, take
\[
  (\tau_{\mathrm{PM}},V_{\mathrm{PM}})
  =
  \left(
    1,
    g^{-1}\dd\log\frac{\pi_{\mathrm{PM}}}{\pi_J}
  \right).
\]
The corresponding invariant probability measure is the posterior
\[
  \mu_n^{\mathrm{PM}}(\dd\theta)
  \propto
  e^{\ell_n(\theta)}
  \pi_{\mathrm{PM}}(\theta)\,\nu(\dd\theta),
\]
and
\[
  \widehat\theta_n^{\mathrm{PM}}
  :=\mu_n^{\mathrm{PM}}(\theta).
\]
The posterior distribution $\mu_n^{\mathrm{PM}}$, and hence
$\widehat\theta_n^{\mathrm{PM}}$, is independent of the choice of $\nu$.

For the MAP estimator, let $\Pi_{\mathrm{MAP}}$ have smooth positive density
\[
  \pi_{\mathrm{MAP}}
  :=\frac{\dd\Pi_{\mathrm{MAP}}}{\dd\nu},
\]
and let $\widehat\theta_n^{\mathrm{MAP}}$ be an interior maximizer satisfying
\begin{equation}
  \widehat\theta_n^{\mathrm{MAP}}
  \in\operatorname*{argmax}_{\theta\in\Theta}
       \{\ell_n(\theta)+\log\pi_{\mathrm{MAP}}(\theta)\}.
  \label{eq:map-reference-measure}
\end{equation}
Take
\[
  (\tau_{\mathrm{MAP}},V_{\mathrm{MAP}})
  =
  \left(
    0,
    g^{-1}\dd\log\pi_{\mathrm{MAP}}
  \right),
\]
and let
\[
  \mu_n^{\mathrm{MAP}}
  :=
  \delta_{\widehat\theta_n^{\mathrm{MAP}}}.
\]
Then
\[
  \widehat\theta_n^{\mathrm{MAP}}
  =\mu_n^{\mathrm{MAP}}(\theta).
\]
Unlike the posterior mean, the MAP estimator generally depends on the
reference measure $\nu$.

Assume Assumption~\textup{[A1]} and
Assumptions~\textup{[A2]}$(\mu_n^{\mathrm{MLE}})$,
\textup{[A2]}$(\mu_n^{\mathrm{PM}})$, and
\textup{[A2]}$(\mu_n^{\mathrm{MAP}})$.
The comparisons below follow directly from
\eqref{eq:estimator-comparison-coordinate}; Assumptions~\textup{[L]}
and~\textup{[A3]} are not needed. Since $D^{(1)}$ is torsion-free,
\[
  \Gamma^\nabla-\frac12\Gamma^{(-1)}
  =
  \frac12\Gamma^{(1)}.
\]
For the posterior mean and the MLE,
\begin{equation}
\begin{aligned}
  \widehat\theta_n^{\mathrm{PM}}
  -\widehat\theta_n^{\mathrm{MLE}}
  =a_n^2\left\{
       g^{-1}\dd\log\frac{\pi_{\mathrm{PM}}}{\pi_J}
       -\frac12\Gamma^{(1)}[g^{-1}]
    \right\}(\theta_0)
       +o_{\Linftyminus(\theta_0)}(a_n^2).
  \label{eq:posterior-mle-coordinate-comparison}
\end{aligned}
\end{equation}
At a fixed $\theta_0$, the posterior mean and the MLE agree up to
$o_{\Linftyminus(\theta_0)}(a_n^2)$ if and only if the coefficient on the
right-hand side vanishes at $\theta_0$. If the assumptions hold at every
interior true parameter, they agree to this order at every interior true
parameter if and only if
\begin{equation}
  g^{-1}\dd\log\frac{\pi_{\mathrm{PM}}}{\pi_J}
  =
  \frac12\Gamma^{(1)}[g^{-1}].
  \label{eq:moment-matching-prior}
\end{equation}
Likewise,
\begin{equation}
\begin{aligned}
  \widehat\theta_n^{\mathrm{PM}}
  -\widehat\theta_n^{\mathrm{MAP}}
  =a_n^2\left\{
       g^{-1}\dd\log\frac{\pi_{\mathrm{PM}}}
                            {\pi_{\mathrm{MAP}}\pi_J}
       -\frac12\Gamma^{(1)}[g^{-1}]
    \right\}(\theta_0)
       +o_{\Linftyminus(\theta_0)}(a_n^2),
  \label{eq:posterior-map-coordinate-comparison}
\end{aligned}
\end{equation}
whereas
\begin{equation}
  \widehat\theta_n^{\mathrm{MAP}}
  -\widehat\theta_n^{\mathrm{MLE}}
  =
  a_n^2\bigl(g^{-1}\dd\log\pi_{\mathrm{MAP}}\bigr)(\theta_0)
  +o_{\Linftyminus(\theta_0)}(a_n^2).
  \label{eq:map-mle-coordinate-comparison}
\end{equation}

In regular i.i.d.\ likelihood models,
equation~\eqref{eq:moment-matching-prior} is Tanaka's
information-geometric representation~\cite{Tanaka2023} of the first-moment
matching condition of Ghosh and Liu~\cite{GhoshLiu2011}.
Furthermore,  \eqref{eq:posterior-map-coordinate-comparison} recovers the
matching-prior-pair condition of Okudo and Yano~\cite{OkudoYano2026}.

Finally,
\[
  \Gamma^\nabla[g^{-1}]
  =
  \frac12\Gamma^{(1)}[g^{-1}]
  +
  \frac12\Gamma^{(-1)}[g^{-1}].
\]
Thus the posterior-mean bias-reduction condition
\eqref{eq:posterior-mean-bias-reducing-prior} is the sum of the
moment-matching condition \eqref{eq:moment-matching-prior} and the
zero-temperature condition
\eqref{eq:zero-temperature-bias-correcting-field}; explicitly,
\[
  g^{-1}\dd\log\frac{\pi_{\mathrm{BR}}}{\pi_J}
  =
  \underbrace{\frac12\Gamma^{(1)}[g^{-1}]}_{\text{moment matching}}
  +
  \underbrace{\frac12\Gamma^{(-1)}[g^{-1}]}_{\text{zero-temperature bias reduction}}.
\]
This recovers the decomposition described by Sakai, Matsuda and
Kubokawa~\cite{SakaiMatsudaKubokawa2025}; the same relationship is also
discussed by Miyata and Yanagimoto~\cite{MiyataYanagimoto2026}.
\subsubsection{Loss-dependent decisions}

Fix $(\tau,V)$, and let $\mu_n$ be the corresponding invariant
probability measure. Let $W_1,W_2$ be two losses with the same target map
$m$. For $j=1,2$, define
\[
  \widehat z_n^{(j)}\in\operatorname*{argmin}_{z\in Z}
  \int_\Theta W_j(\theta,z)\,\mu_n(\dd\theta),\qquad j=1,2.
\]
Under Assumption~\textup{[A1]} and
Assumption~\textup{[A2]}$(\mu_n)$,
Corollary~\ref{cor:loss-temperature-drift-comparison} yields
\begin{equation}
\begin{aligned}
  \gamma(\widehat z_n^{(1)})-\gamma(\widehat z_n^{(2)})
  &=-\frac{\tau a_n^2}{2}
      (\Delta^{W_1}-\Delta^{W_2})\gamma(\theta_0)
    +o_{\Linftyminus(\theta_0)}(a_n^2).
  \label{eq:common-target-loss-comparison}
\end{aligned}
\end{equation}
Thus the two decisions agree at first order, while their second-order
difference is determined by the loss-induced operators.

\begin{example}
\label{ex:linex-loss}
Let $f\in C^\infty(\Theta;\R)$, let $Z$ be a compact interval containing
$f(\Theta)$ in its interior, and fix $c\ne0$. Consider the LINEX loss,
up to a positive multiplicative constant,
\begin{equation}
  W(\theta,z)
  :=\frac{e^{c(z-f(\theta))}-c(z-f(\theta))-1}{c^2}.
  \label{eq:linex-loss}
\end{equation}
Bayes estimation under LINEX loss is studied by
Zellner~\cite{Zellner1986}. Here $m=f$, and
$W(\mid\partial_z\partial_z)=1$, so \eqref{eq:linex-loss} is a loss in the sense of Definition~\ref{def:loss}. Direct differentiation gives
\[
  \mathrm D Q_n(z)
  =\frac{e^{cz}\mu_n(e^{-cf})-1}{c},
  \qquad
  \mathrm D^2Q_n(z)=e^{cz}\mu_n(e^{-cf})>0.
\]
Here $\mathrm D$ denotes the Fr\'echet derivative; since $Z\subset\R$,
it is the ordinary derivative with respect to $z$.
Consequently, the decision is uniquely given by
\begin{equation}
  \widehat z_n=-\frac1c\log\mu_n(e^{-cf}).
  \label{eq:linex-decision}
\end{equation}
This value lies between $\min_\Theta f$ and $\max_\Theta f$, hence in
$Z^\circ$. The definition of $D^W$ gives, for local vector fields $X,Y$,
\[
  D_X^WY=\{XYf-c(Xf)(Yf)\}\partial_z,
\]
and therefore
\begin{equation}
  \Delta^W\operatorname{id}_Z=c\braket{\dd f,\dd f}.
  \label{eq:linex-loss-laplacian}
\end{equation}
The squared loss $\tfrac12(z-f(\theta))^2$ has the same target map $f$ and
has decision $\mu_n(f)$. Under Assumption~\textup{[A1]} and
Assumption~\textup{[A2]}$(\mu_n)$,
Theorem~\ref{thm:main-expansion} and
\eqref{eq:linex-loss-laplacian} give
\begin{equation}
  \widehat z_n-\mu_n(f)
  =-\frac{\tau c\,a_n^2}{2}\braket{\dd f,\dd f}(\theta_0)
    +o_{\Linftyminus(\theta_0)}(a_n^2).
  \label{eq:linex-comparison}
\end{equation}
If Assumption~\textup{[A3]} also holds, then
Proposition~\ref{prop:bias-expansion} gives
\begin{equation}
  \E_{n,\theta_0}[\widehat z_n]-f(\theta_0)
  =a_n^2\left\{
      \calB f-\frac{\tau c}{2}\braket{\dd f,\dd f}
    \right\}(\theta_0)+o(a_n^2).
  \label{eq:linex-bias}
\end{equation}
Thus the asymmetric loss changes the decision at second order while
preserving its target and first-order term.
\end{example}

\section{Universal Bias Reduction}
\label{sec:bias}

Throughout this section, fix a loss $W$, a temperature $\tau>0$, and
a smooth vector field $V$.

For a connection $D$ on $T\Theta$, define
\[
  (\dd m\circ D)_X Y:=\dd m[D_XY],
  \qquad X,Y\in\mathfrak X(\Theta).
\]
$\dd m\circ D$ is an O-derivative with principal homomorphism $\dd m$. Hence
$\dd m\circ D-D^W$ is a section of $m^*TZ\otimes(T^*\Theta)^{\otimes2}$.

Define the smooth section $V^*\in\Gamma(m^*TZ)$ by
\begin{equation}
\begin{aligned}
  V^*
  :=\dd m[b]+\Bigg\{
       \tau\bigl(\dd m\circ\nabla-D^W\bigr)
       +\frac{1-\tau}{2}
          \bigl(\dd m\circ D^{(-1)}-D^W\bigr)
     \Bigg\}[g^{-1}].
  \label{eq:loss-calibration-section}
\end{aligned}
\end{equation}
The section $V^*$ depends on the fixed $(\tau,W)$ but not on $V$.
When $m=\operatorname{id}_\Theta$, it is a vector field on $\Theta$.

Define $\mathbb B_{n,\theta},\widehat{\mathbb B}_n:
C^\infty(Z;\R^{\mathrm q})\to\R^{\mathrm q}$ by
\begin{equation}
\begin{aligned}
  \mathbb B_{n,\theta}(\gamma)
  &:=a_n^{-2}\{
      \E_{n,\theta}[\gamma(\widehat z_n)]-\gamma(m(\theta))
    \},\\
  \widehat{\mathbb B}_n(\gamma)
  &:=a_n^{-2}\tau^{-1}\{
      \mu_n(\gamma\circ m)-\gamma(\widehat z_n)
    \}.
  \label{eq:bias-functionals}
\end{aligned}
\end{equation}
For fixed $\theta$, the first functional is deterministic and the second
is data-dependent. Set
\begin{equation}
\begin{aligned}
  \widetilde\gamma_n
  &:=\gamma(\widehat z_n)-a_n^2\widehat{\mathbb B}_n(\gamma)
   =\frac{1+\tau}{\tau}\gamma(\widehat z_n)
      -\frac1\tau\mu_n(\gamma\circ m).
  \label{eq:loss-bias-correction}
\end{aligned}
\end{equation}

\begin{corollary}
\label{cor:bias-comparison-correction}
Under Assumptions~\textup{[A1]} and~\textup{[A3]} and
Assumption~\textup{[A2]}$(\mu_n)$, for every fixed
$\gamma\in C^\infty(Z;\R^{\mathrm q})$,
\begin{equation}
\begin{aligned}
  (\mathbb B_{n,\theta_0}-\widehat{\mathbb B}_n)(\gamma)
  &=(\dd\gamma\circ m)[\dd m[V]-V^*](\theta_0)
    +o_{\Linftyminus(\theta_0)}(1).
  \label{eq:normalized-bias-comparison}
\end{aligned}
\end{equation}
Consequently, if
\begin{equation}
  \dd m[V]=V^*,
  \label{eq:loss-calibrating-drift}
\end{equation}
then
\begin{equation}
  \E_{n,\theta_0}[\widetilde\gamma_n]-\gamma(m(\theta_0))
  =o(a_n^2).
  \label{eq:loss-corrected-bias-expansion}
\end{equation}
\end{corollary}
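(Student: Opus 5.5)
The plan is to combine the two results already established: Proposition~\ref{prop:bias-expansion}, which gives the deterministic bias $\mathbb B_{n,\theta_0}(\gamma)$, and Theorem~\ref{thm:main-expansion}, which gives the stochastic expansion of $\widehat{\mathbb B}_n(\gamma)$. The key observation is that the random first- and second-order terms of $\mathcal M_n$ cancel in the difference $\mu_n(\gamma\circ m)-\gamma(\widehat z_n)$. What survives is only the loss term $\frac{\tau a_n^2}{2}(\Delta^W\gamma)(\theta_0)$.

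First, apply \eqref{eq:main-expansion} with $f=\gamma\circ m$ and subtract \eqref{eq:loss-explicit-expansion}. This gives
\[
  \mu_n(\gamma\circ m)-\gamma(\widehat z_n)
  =\frac{\tau a_n^2}{2}(\Delta^W\gamma)(\theta_0)+o_{\Linftyminus(\theta_0)}(a_n^2).
\]
Since $\tau>0$ is fixed, it follows that $\widehat{\mathbb B}_n(\gamma)=\tfrac12(\Delta^W\gamma)(\theta_0)+o_{\Linftyminus(\theta_0)}(1)$. By \eqref{eq:loss-bias-expansion}, $\mathbb B_{n,\theta_0}(\gamma)=\{\calB(\gamma\circ m)-\tfrac\tau2\Delta^W\gamma\}(\theta_0)+o(1)$. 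Hence
\[
  (\mathbb B_{n,\theta_0}-\widehat{\mathbb B}_n)(\gamma)
  =\Bigl\{\calB(\gamma\circ m)-\tfrac{1+\tau}{2}\Delta^W\gamma\Bigr\}(\theta_0)
   +o_{\Linftyminus(\theta_0)}(1).
\]

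The main step is an algebraic identity. We need to rewrite $\calB(\gamma\circ m)-\frac{1+\tau}{2}\Delta^W\gamma$ as $(\dd\gamma\circ m)[\dd m[V]-V^*]$. First write $\dd(\gamma\circ m)=(\dd\gamma\circ m)\circ\dd m$, so that $\dd(\gamma\circ m)[V-b]=(\dd\gamma\circ m)[\dd m[V]-\dd m[b]]$. Next, for a connection $D$ on $T\Theta$, use the dual O-derivatives to compare $D\,\dd(\gamma\circ m)$ with $D^W(\dd\gamma\circ m)$.

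By the definitions, for vector fields $X,Y$,
\[
  (D\,\dd(\gamma\circ m))[X,Y]=X\{(\dd\gamma\circ m)[\dd m[Y]]\}-(\dd\gamma\circ m)[\dd m[D_XY]]
\]
and
\[
  (D^W(\dd\gamma\circ m))[X,Y]=X\{(\dd\gamma\circ m)[\dd m[Y]]\}-(\dd\gamma\circ m)[D^W_XY].
\]
Subtracting these two gives the tensorial identity
\[
  (D\,\dd(\gamma\circ m))-D^W(\dd\gamma\circ m)=-(\dd\gamma\circ m)\bigl[(\dd m\circ D-D^W)\bigr].
\]
Contracting with $g^{-1}$ then yields
\[
  \Delta^D(\gamma\circ m)-\Delta^W\gamma=-(\dd\gamma\circ m)\bigl[(\dd m\circ D-D^W)[g^{-1}]\bigr].
\]

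Apply this identity with $D=\nabla$, weighted by $\tau$, and with $D=D^{(-1)}$, weighted by $\frac{1-\tau}{2}$. The weights sum to $\tau+\frac{1-\tau}{2}=\frac{1+\tau}{2}$, which is exactly the coefficient of $\Delta^W\gamma$. Hence
\[
  \tau\Delta(\gamma\circ m)+\tfrac{1-\tau}{2}\Delta^{(-1)}(\gamma\circ m)-\tfrac{1+\tau}{2}\Delta^W\gamma
  =-(\dd\gamma\circ m)\bigl[V^*-\dd m[b]\bigr],
\]
by the definition \eqref{eq:loss-calibration-section} of $V^*$. Adding the drift part $(\dd\gamma\circ m)[\dd m[V]-\dd m[b]]$ gives the difference $(\dd\gamma\circ m)[\dd m[V]-V^*]$, which is \eqref{eq:normalized-bias-comparison}. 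I expect this bookkeeping of the dual O-derivative, and checking that the coefficient of $\Delta^W$ matches, to be the only delicate point.

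Finally, suppose \eqref{eq:loss-calibrating-drift} holds. Then \eqref{eq:normalized-bias-comparison} gives $\widehat{\mathbb B}_n(\gamma)=\mathbb B_{n,\theta_0}(\gamma)+o_{\Linftyminus(\theta_0)}(1)$. An $o_{\Linftyminus}$ remainder is in particular $o_{L^1}$, so we can take expectations:
\[
  \E_{n,\theta_0}[\widetilde\gamma_n]
  =\E_{n,\theta_0}[\gamma(\widehat z_n)]-a_n^2\mathbb B_{n,\theta_0}(\gamma)+o(a_n^2)
  =\gamma(m(\theta_0))+o(a_n^2).
\]
This is \eqref{eq:loss-corrected-bias-expansion}.
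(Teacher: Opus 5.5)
Your proposal is correct and follows essentially the same route as the paper. You subtract the two expansions of Theorem~\ref{thm:main-expansion} to get $\widehat{\mathbb B}_n(\gamma)=\tfrac12\Delta^W\gamma(\theta_0)+o_{\Linftyminus(\theta_0)}(1)$, combine this with Proposition~\ref{prop:bias-expansion}, and convert $\calB(\gamma\circ m)-\tfrac{1+\tau}{2}\Delta^W\gamma$ into $(\dd\gamma\circ m)[\dd m[V]-V^*]$ via $\Delta^D(\gamma\circ m)-\Delta^W\gamma=-(\dd\gamma\circ m)[(\dd m\circ D-D^W)[g^{-1}]]$ with $D=\nabla$ and $D=D^{(-1)}$. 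Your explicit derivation of that identity, which the paper simply attributes to the dual derivative rule, is correct, and so is your final expectation step, which is valid because the remainder is $o_{L^1}$.
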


\begin{remark}
For comparison with bootstrap bias correction, suppose $Z=\Theta$ and
$m=\operatorname{id}_\Theta$. Conditional on the observed data, let
\[
  X_n^*\sim P_{n,\widehat z_n},
\]
and let $\widehat z_n^*$ be obtained from $X_n^*$ by applying the same
estimation procedure as for $\widehat z_n$. Write $\E^*$ for conditional
expectation given the observed data with respect to
$P_{n,\widehat z_n}$. The usual bootstrap bias correction for
$\gamma(\widehat z_n)$ has the form
\[
  2\gamma(\widehat z_n)-\E^*[\gamma(\widehat z_n^*)].
\]
At $\tau=1$, our correction is
\[
  2\gamma(\widehat z_n)-\mu_n(\gamma).
\]
Thus, at the level of the bias-correction formula, integration with respect
to $\mu_n$ plays a role analogous to averaging over the bootstrap
distribution of $\widehat z_n^*$. Unlike the bootstrap, however, the
correction is obtained from the same invariant probability measure used to
construct $\widehat z_n$, without generating bootstrap samples and
recomputing the estimator.
\end{remark}

\begin{corollary}
\label{thm:canonical-loss-calibrations}
Let $\tau=1$ and $Z=\Theta$.
\begin{enumerate}[label=\textup{(\roman*)}]
\item Suppose $W(\theta,z)=d(\theta,z)^2$, where $d^2$ satisfies the
boundary smoothness condition in Example~\ref{ex:squared-distance-loss}.
Then $V=b$ satisfies \eqref{eq:loss-calibrating-drift}.
\item Under Setting~\textup{[E]}, suppose
$W(\theta,z)=|\theta-z|^2$. Then
$V=b+\Gamma^\nabla[g^{-1}]$ satisfies
\eqref{eq:loss-calibrating-drift}.
\end{enumerate}
\end{corollary}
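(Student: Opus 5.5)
With $\tau=1$, the definition \eqref{eq:loss-calibration-section} reduces to
\[
  V^*=\dd m[b]+\bigl(\dd m\circ\nabla-D^W\bigr)[g^{-1}],
\]
because the coefficient $(1-\tau)/2$ vanishes. The plan is therefore to identify $m$, $\dd m$ and $D^W$ in each of the two cases, compute the tensor $(\dd m\circ\nabla-D^W)[g^{-1}]$, and check that the proposed $V$ satisfies $\dd m[V]=V^*$.

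\emph{Case (i).} By Example~\ref{ex:squared-distance-loss} with $h=g$, we have $m=\operatorname{id}_\Theta$, $\dd m=\operatorname{id}$ and $D^W=\nabla^g=\nabla$. Hence $\dd m\circ\nabla-D^W=0$ and $V^*=b$. The choice $V=b$ then gives $\dd m[V]=b=V^*$. The only real input is the identity $D^W=\nabla^h$ from \eqref{eq:distance-loss-geometry}. I will take it as given, but I would also sanity-check it by noting two facts. First, $-W(X_1X_2\mid Y)$ at the diagonal equals $2\braket{\nabla_{X_1}X_2,Y}$, which follows by differentiating $d^2$ in normal coordinates, where the mixed third derivatives vanish. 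Second, $g_W=2g$.

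\emph{Case (ii).} Here $W(\theta,z)=|\theta-z|^2$ and $m=\operatorname{id}$. The loss metric is $g_W=2\,\mathrm{I}$, the Euclidean metric. Definition~\ref{def:loss-induced-derivative} gives
\[
  \braket{D^W_{X_1}X_2,Y}_{g_W}=-X_1^{(1)}X_2^{(1)}Y^{(2)}W\big|_{(\theta,\theta)}.
\]
Expanding $W=\sum_k(\theta^k-z^k)^2$, one checks that $D^W$ is the flat coordinate connection $\partial$, with Christoffel symbols zero. The reason is that in coordinate vector fields the third derivative $\partial_{\theta^i}\partial_{\theta^j}\partial_{z^k}W=0$, and the Leibniz rule \eqref{eq:loss-O-derivative-Leibniz} then fixes the rest. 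Consequently
\[
  (\dd m\circ\nabla-D^W)_XY=\nabla_XY-\partial_XY=\Gamma^\nabla(X,Y),
\]
so $(\dd m\circ\nabla-D^W)[g^{-1}]=\Gamma^\nabla[g^{-1}]$ in the notation of Setting~[E]. This gives $V^*=b+\Gamma^\nabla[g^{-1}]$, and $V=b+\Gamma^\nabla[g^{-1}]$ satisfies \eqref{eq:loss-calibrating-drift}.

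The main point needing care is the sign and coordinate convention in case (ii). This means confirming that $D^W$ is exactly the flat derivative rather than some multiple of it. It also means confirming that the contraction against $g^{-1}$ matches the definition $\Gamma^D[S]^k=\Gamma^{D,k}_{ij}S^{ij}$, with the difference of two O-derivatives sharing principal homomorphism $\operatorname{id}$ identified with the difference of their Christoffel symbols. I would also remark that the smoothness requirement on $|\theta-z|^2$ near the boundary diagonal is automatic, so $W$ is a loss and the earlier results apply.
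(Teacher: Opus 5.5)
Your proposal is correct and follows essentially the same route as the paper: identify $m=\operatorname{id}_\Theta$ and $D^W$, which is the Levi--Civita connection $\nabla$ of $g$ in (i) and the flat coordinate connection in (ii), and substitute these into the definition of $V^*$. The only cosmetic difference is that the paper first writes $V^*$ for general $\tau>0$ and then sets $\tau=1$, whereas you set $\tau=1$ at the outset.
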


If, in addition, Assumptions~\textup{[A1]} and~\textup{[A3]} and
Assumption~\textup{[A2]}$(\mu_n)$ hold in either case, then
Corollary~\ref{cor:bias-comparison-correction} applies to the corrected
estimator $\widetilde\gamma_n$ in \eqref{eq:loss-bias-correction}, and
\eqref{eq:loss-corrected-bias-expansion} holds with
$m=\operatorname{id}_\Theta$.

In \textup{(i)}, $\widehat z_n$ is a Fr\'echet mean of $\mu_n$ with respect
to $d$. If, moreover, the likelihood setting
\eqref{eq:intro-likelihood-specialization} and Assumption~\textup{[L]} hold,
then Proposition~\ref{prop:likelihood-bartlett-compatibility} gives $b=0$,
and hence the choice in \textup{(i)} is $V=0$. The
corresponding invariant probability measure is
the Jeffreys posterior
\begin{equation}
  \mu_n(\dd\theta)\propto e^{\ell_n(\theta)}\,\vol(\dd\theta).
  \label{eq:jeffreys-posterior}
\end{equation}
Thus, whenever the assumptions of
Corollary~\ref{cor:bias-comparison-correction} hold,
\eqref{eq:loss-corrected-bias-expansion} holds with the Jeffreys posterior
and its Fisher--Rao Fr\'echet mean.

In \textup{(ii)}, the coordinate representation of the process is
\begin{equation}
\begin{aligned}
  &\dd\theta_t^i
  =-\frac12\left\{
      a_n^{-2}g^{ij}(\theta_t)\Psi_{n,j}(\theta_t)-b^i(\theta_t)
    \right\}\dd t + \sigma^i{}_{\alpha}(\theta_t)\,\dd W_t^\alpha
   -\mathbf n^i(\theta_t)\,\dd K_t,
\end{aligned}
  \label{eq:natural-gradient-diffusion}
\end{equation}
where $\sigma\sigma^{\mathsf T}=g^{-1}$. Under the likelihood setting \eqref{eq:intro-likelihood-specialization} and Assumption~\textup{[L]}, $b=0$ and this reduces to \eqref{eq:intro-natural-gradient-diffusion}.

\section{Simulation}
\label{sec:simulation}

We numerically examine whether $\mu_n(\theta)$, where $\mu_n$ is an
invariant probability measure of the reflected process
\eqref{eq:intro-natural-gradient-diffusion}, has the $o(n^{-1})$ bias
predicted by \eqref{eq:intro-invariant-mean-bias} in the gamma shape--scale
model. Verification of the assumptions required for this asymptotic result
is not pursued here.

\subsection{Gamma model}
\label{subsec:gamma-model}

\subsubsection{Model and the obstruction to a bias-reducing prior}

Let
\begin{equation}
  \Theta=Z=\Theta_{\mathrm{num}}
  :=[0.05,30]\times[0.02,20],
  \qquad \theta=(a,b),
  \label{eq:gamma-numerical-domain}
\end{equation}
and let $\theta_0\in\Theta^\circ$. The rectangular domain
$\Theta_{\mathrm{num}}$ has corners, whereas
Sections~\ref{sec:setup}--\ref{sec:bias} assume a smooth boundary. In the
simulations reported below, the process did not hit the boundary, so this
distinction does not affect the reported numerical results.

For each $n$, consider
\[
  \mathcal X_n=(0,\infty)^n,\qquad
  \mathcal A_n=\mathcal B((0,\infty)^n),\qquad
  P_{n,(a,b)}=P_{a,b}^{\otimes n},
\]
where $P_{a,b}$ has density
\begin{equation}
  p_{a,b}(x)
  =\frac{x^{a-1}e^{-x/b}}{\Gamma(a)b^a},\qquad x>0,
  \label{eq:gamma-density}
\end{equation}
with respect to Lebesgue measure, and $a$ and $b$ are the shape and scale
parameters. For observations $X_1,\ldots,X_n$, set
\[
  \ell_n(\theta)=\sum_{i=1}^n\log p_{a,b}(X_i),\qquad
  a_n=n^{-1/2},\qquad
  \Psi_n=-n^{-1}\dd\ell_n.
\]
The Fisher information per observation is
\begin{equation}
  g(a,b)=
  \begin{pmatrix}
    \psi_1(a)&b^{-1}\\
    b^{-1}&ab^{-2}
  \end{pmatrix},
  \label{eq:gamma-fisher}
\end{equation}
where $\psi_j$ is the polygamma function of order $j$. We take the
coordinate squared loss
\[
  W(\theta,z)=|\theta-z|^2,\qquad \theta,z\in\Theta,
\]
so that $m=\operatorname{id}_\Theta$.

This is the example of Sakai, Matsuda and
Kubokawa~\cite{SakaiMatsudaKubokawa2025} in which a
second-order unbiased prior does not exist.

\subsubsection{Estimators and numerical implementation}

For the gamma model, the reflected process
\eqref{eq:intro-natural-gradient-diffusion} is
\[
  \dd\theta_s
  =\frac12g(\theta_s)^{-1}\partial\ell_n(\theta_s)\,\dd s
   +\sigma(\theta_s)\,\dd W_s
   -\mathbf n(\theta_s)\,\dd K_s,
  \qquad \sigma\sigma^{\mathsf T}=g^{-1}.
\]
For the numerical implementation on the rectangular domain
$\Theta_{\mathrm{num}}$, reflection is applied facewise in the inward
$g$-normal direction.

We compare the MLE, the Jeffreys posterior mean, and $\mu_n(\theta)$, where
$\mu_n$ is an invariant probability measure of this reflected process.
The Jeffreys posterior mean is computed numerically. To approximate
$\mu_n(\theta)$, the process is initialized at the MLE and
a trajectory average is taken after a burn-in period. We use the rescaled
time $u=ns$, burn-in time $50$, averaging time $200$, and Euler--Maruyama
step size $0.005$.

We take $\theta_0=(2,1)$ and
$n\in\{50,100,200,400,800\}$. For each $n$, we first estimate the MLE
bias from $100000$ independent samples. Let $\widehat B_{\mathrm{MLE}}$
denote the resulting Monte Carlo estimate of the MLE bias.

Separately, we use $R=1000$ samples for the three estimators. Here
$\widehat\theta_{n,J}^{(r)}$, $\mu_n^{(r)}(\theta)$, and
$\widehat\theta_{n,\mathrm{MLE}}^{(r)}$ denote, respectively, the
Jeffreys posterior mean, $\mu_n(\theta)$, and the MLE computed from the
$r$th sample. We estimate their biases by
\[
  \widehat{\operatorname{Bias}}\bigl(\widehat\theta_{n,J}\bigr)
  =
  \widehat B_{\mathrm{MLE}}
  +
  \frac1R\sum_{r=1}^R
  \left(
    \widehat\theta_{n,J}^{(r)}
    -
    \widehat\theta_{n,\mathrm{MLE}}^{(r)}
  \right),
\]
and
\[
  \widehat{\operatorname{Bias}}\bigl(\mu_n(\theta)\bigr)
  =
  \widehat B_{\mathrm{MLE}}
  +
  \frac1R\sum_{r=1}^R
  \left(
    \mu_n^{(r)}(\theta)
    -
    \widehat\theta_{n,\mathrm{MLE}}^{(r)}
  \right).
\]
Because each difference is formed from estimators computed from the same
data, their common first-order fluctuation is largely cancelled, reducing
the Monte Carlo error. Parentheses in Table~\ref{tab:gamma-main} give
Monte Carlo standard errors of the displayed bias estimates.

\subsubsection{Numerical results}

Table~\ref{tab:gamma-main} gives the scaled coordinate biases and
coordinate RMSE. The scaled MLE and Jeffreys biases remain away from zero,
whereas the scaled biases of $\mu_n(\theta)$ decrease towards zero.

\begin{table}[!htbp]
\centering
\small
\setlength{\tabcolsep}{3.5pt}
\caption{Gamma model at $\theta_0=(2,1)$. Parentheses give Monte Carlo
standard errors of the displayed $n\,\mathrm{Bias}$ estimates. RMSE is
coordinatewise and is not multiplied by $n$. J denotes the Jeffreys
posterior mean.}
\label{tab:gamma-main}
\begin{tabular}{@{}cc|rrr|rrr@{}}
\toprule
&&\multicolumn{3}{c|}{$n\,\mathrm{Bias}$}
  &\multicolumn{3}{c}{$\mathrm{RMSE}$}\\
$n$&Coordinate&MLE&J&$\mu_n(\theta)$&MLE&J&$\mu_n(\theta)$\\
\midrule
50&$a$&$5.697$ (0.020)&$5.695$ (0.020)&$-0.959$ (0.057)&0.4293&0.4292&0.3831\\
&$b$&$-0.968$ (0.005)&$1.333$ (0.018)&$0.731$ (0.015)&0.2086&0.2200&0.2134\\
\addlinespace[2pt]
100&$a$&$5.563$ (0.019)&$5.563$ (0.019)&$-0.518$ (0.037)&0.2815&0.2815&0.2638\\
&$b$&$-0.971$ (0.005)&$1.269$ (0.012)&$0.367$ (0.013)&0.1448&0.1472&0.1454\\
\addlinespace[2pt]
200&$a$&$5.494$ (0.019)&$5.493$ (0.019)&$-0.240$ (0.028)&0.1924&0.1924&0.1873\\
&$b$&$-0.977$ (0.005)&$1.269$ (0.010)&$0.192$ (0.013)&0.1059&0.1074&0.1063\\
\addlinespace[2pt]
400&$a$&$5.408$ (0.019)&$5.408$ (0.019)&$-0.184$ (0.024)&0.1345&0.1345&0.1320\\
&$b$&$-0.971$ (0.005)&$1.250$ (0.007)&$0.095$ (0.012)&0.0733&0.0734&0.0732\\
\addlinespace[2pt]
800&$a$&$5.427$ (0.018)&$5.426$ (0.018)&$-0.049$ (0.022)&0.0917&0.0917&0.0911\\
&$b$&$-0.970$ (0.005)&$1.262$ (0.006)&$0.055$ (0.012)&0.0530&0.0533&0.0531\\
\bottomrule
\end{tabular}
\end{table}

At $n=800$, the scaled biases of $\mu_n(\theta)$ are close to zero
in both coordinates, while those of the MLE and Jeffreys posterior mean
remain visibly nonzero. Bias reduction does not necessarily imply a smaller
coordinate RMSE; for example, the scale RMSE of $\mu_n(\theta)$ is slightly
larger than that of the MLE at $n=50$.

\section{Proof of Theorem~\ref{thm:main-expansion}}
\label{sec:proof-main-expansion}

\subsection{Local approximation of the generator}
\label{subsec:proof-local-OU}

For a tensor field $T$, write $T_0:=T(\theta_0)$.
All norms on $T_{\theta_0}\Theta$, $T_{\theta_0}^*\Theta$, and their tensor
powers are induced by $g_0$ and $g_0^{-1}$ and are denoted by $|\cdot|$. Fr\'echet derivatives of maps defined on open subsets of $T_{\theta_0}\Theta$ are denoted by the upright symbol $\mathrm D$. For $r>0$, let
\[
  B_{T_{\theta_0}\Theta}(0,r)
  :=
  \{x\in T_{\theta_0}\Theta:|x|<r\}.
\]

Fix $r_0>0$ sufficiently small so that the exponential map at $\theta_0$ associated with the Levi--Civita
connection $\nabla$ is a diffeomorphism
\begin{equation}
  \exp_{\theta_0}:
  B_{T_{\theta_0}\Theta}(0,r_0)
  \longrightarrow
  B_\Theta(\theta_0,r_0).
  \label{eq:normal-ball-chart}
\end{equation}
Let $\log_{\theta_0}$ denote its inverse.

Throughout this subsection~\ref{subsec:proof-local-OU}, we use the normal chart
$\log_{\theta_0}$ to identify the two neighborhoods.
For an $(r,s)$-tensor field $T$, we use the same symbol for its representation in this chart and write
\[
  T(x):=(\exp_{\theta_0}^*T)_x
  \in
  (T_{\theta_0}\Theta)^{\otimes r}
  \otimes
  (T_{\theta_0}^*\Theta)^{\otimes s},
  \qquad
  x\in B_{T_{\theta_0}\Theta}(0,r_0),
\]
where $T_x(T_{\theta_0}\Theta)$ is canonically identified with
$T_{\theta_0}\Theta$. In particular, $T(0)=T_0$ and $\mathrm D^kT(x)$ denotes the $k$th
Fr\'echet derivative of this representation.

By Assumption~\textup{[A1]}, Taylor's formula, and the chain rule for the
change from the chart in \textup{[A1]} to the normal chart, for every
$p<\infty$,
\begin{equation}
\lim_{r\downarrow0}\limsup_{n\to\infty}
\left\|
  \sup_{|x|\le r}
  \sum_{j=0}^2
  \left|
    \mathrm D^j(\Psi_n-\Psi^{\theta_0})(x)
  \right|
\right\|_{L^p(P_{n,\theta_0})}
=
0.
\label{eq:A1-normal-chart}
\end{equation}

Let $\kappa\in(0,1)$ be as in Assumption~\textup{[A2]}$(\mu_n)$.
Decreasing $r_0$ if necessary and then choosing $n_0\in\mathbb N$
sufficiently large, we may assume that
\begin{equation}
  \sup_{n\ge n_0}
  \left\|
    \sup_{|x|\le r_0}
    \sum_{j=0}^2
    \left|
      \mathrm D^j(\Psi_n-\Psi^{\theta_0})(x)
    \right|
  \right\|_{L^2(P_{n,\theta_0})}
  <\infty,
  \qquad
  a_{n_0}^\kappa<r_0.
  \label{eq:A1-fixed-normal-bound}
\end{equation}
For $n\ge n_0$, set
\[
  O_n:=B_\Theta(\theta_0,a_n^\kappa),
\]
and define
\begin{equation}
  \theta_n(u):=\exp_{\theta_0}(a_nu),
  \qquad
  |u|\le a_n^{\kappa-1}.
  \label{eq:theta-n-definition}
\end{equation}

Fix $\chi\in C_c^\infty(B_{T_{\theta_0}\Theta}(0,r_0))$ such that $\chi=1$ on a neighborhood of $\overline{B_{T_{\theta_0}\Theta}(0,a_{n_0}^\kappa)}$, and define
$\bar x:\Theta\to T_{\theta_0}\Theta$ by
\begin{equation}
  \bar x(\theta)
  :=
  \begin{cases}
    \chi(\log_{\theta_0}\theta)\log_{\theta_0}\theta,
    &\theta\in B_\Theta(\theta_0,r_0),\\
    0,
    &\theta\notin B_\Theta(\theta_0,r_0),
  \end{cases}
  \qquad
  \bar u_n:=a_n^{-1}\bar x.
  \label{eq:barx-definition}
\end{equation}
Then $  \supp\bar x
  \Subset
  B_\Theta(\theta_0,r_0)
  \Subset
  \Theta^\circ$ and
\begin{equation}
  \bar u_n(\theta_n(u))=u,
  \qquad
  n\ge n_0,\quad |u|\le a_n^{\kappa-1}.
  \label{baru=u}
\end{equation}
\begin{proposition}
\label{prop:local-OU-reduction}
Under Assumption~\textup{[A1]} and Assumption~\textup{[A2]}$(\mu_n)$, for every $n\ge n_0$,
every smooth $\varphi:T_{\theta_0}\Theta\to\mathbb R$, and every
$|u|\le a_n^{\kappa-1}$,
\begin{equation}
  a_n^2L_n(\varphi\circ\bar u_n)(\theta_n(u))
  =
  \frakL_n^{[0]}\varphi(u)
  +\frakL_n^{[1]}\varphi(u)
  +\mathcal R_n\varphi(u),
  \label{eq:local-OU-generator-decomposition}
\end{equation}
where
\begin{align}
  \frakL_n^{[0]}\varphi(u)
  &:=
  \frac{\tau}{2}\mathrm D^2\varphi(u)[g_0^{-1}]
  +\frac12\mathrm D\varphi(u)[\zeta_{n,0}-u],
  \label{eq:leading-OU-operator}
  \\
  \frakL_n^{[1]}\varphi(u)
  &:=
  \frac12\mathrm D\varphi(u)\!\left[
    \Upsilon_{n,0}[u]
    +a_nV_0
    +a_n(\nabla-D^{(-1)})_0
      \left[u,\zeta_{n,0}-\frac12u\right]
  \right].
  \label{eq:first-OU-correction}
\end{align}
There exists $\epsilon_n=o_{\Linftyminus(\theta_0)}(1)$ such that,
uniformly on $|u|\le a_n^{\kappa-1}$,
\begin{equation}
  |\mathcal R_n\varphi(u)|
  \le
  a_n\epsilon_n(1+|u|^2)
  \left\{
    |\mathrm D\varphi(u)|+|\mathrm D^2\varphi(u)|
  \right\}.
  \label{eq:local-OU-remainder}
\end{equation}
\end{proposition}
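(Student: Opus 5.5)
The plan is a direct computation of $a_n^2L_n(\varphi\circ\bar u_n)$ in the normal chart $\log_{\theta_0}$, followed by Taylor expansion of the coefficients of $L_n$ around $x=0$. For $|u|\le a_n^{\kappa-1}$ we have $x=a_nu$ with $|x|\le a_n^\kappa<r_0$. By \eqref{baru=u}, $\varphi\circ\bar u_n=\varphi(\cdot/a_n)$ on a neighborhood of $\theta_n(u)$, so first and second coordinate derivatives of $\varphi\circ\bar u_n$ equal $a_n^{-1}\mathrm D\varphi(u)$ and $a_n^{-2}\mathrm D^2\varphi(u)$. I would then treat the three pieces of $L_n$ separately, using the normal-chart facts $g^{-1}(x)=g_0^{-1}+O(|x|^2)$ and $\Gamma^\nabla(x)=O(|x|)$.

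\emph{Diffusion term.} In coordinates, $\tfrac{\tau}{2}a_n^2\Delta f=\tfrac{\tau}{2}\mathrm D^2\varphi(u)[g^{-1}(x)]-\tfrac{\tau}{2}a_n\mathrm D\varphi(u)[\Gamma^\nabla(x)[g^{-1}(x)]]$. The first piece equals $\tfrac{\tau}{2}\mathrm D^2\varphi(u)[g_0^{-1}]+O(a_n^2|u|^2)|\mathrm D^2\varphi(u)|$. The second is $O(a_n^2|u|)|\mathrm D\varphi(u)|$. Both errors are absorbed by \eqref{eq:local-OU-remainder} with a deterministic $\epsilon_n\ge Ca_n$.

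\emph{Drift term $V$.} We have $\tfrac12 a_n\mathrm D\varphi(u)[V(x)]=\tfrac12 a_n\mathrm D\varphi(u)[V_0]+O(a_n^2|u|)|\mathrm D\varphi(u)|$.

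\emph{Main term.} The main term is $-\tfrac12\mathrm D\varphi(u)[a_n^{-1}g^{-1}(x)\Psi_n(x)]$. Expand
\[
  \Psi_n(x)=\Psi_n(0)+\mathrm D\Psi_n(0)[x]+\tfrac12\mathrm D^2\Psi^{\theta_0}(0)[x,x]+\rho_n(x),
\]
with $|\rho_n(x)|\le|x|^2\sup_{|y|\le a_n^\kappa}|\mathrm D^2(\Psi_n-\Psi^{\theta_0})(y)|$. This supremum is $o_{\Linftyminus(\theta_0)}(1)$ by \eqref{eq:A1-normal-chart}, since $a_n^\kappa\downarrow0$; that is the random part of $\epsilon_n$. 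After multiplying by $a_n^{-1}$, $\rho_n$ contributes $a_n|u|^2\epsilon_n$.

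\emph{Constant and linear parts.} The constant part gives $\zeta_{n,0}$ exactly. The $O(|x|^2)$ error in $g^{-1}$ times $\Psi_n(0)=O(a_n)$ gives $a_n^2|u|^2\,O_{\Linftyminus}(1)$. Cross terms between the $g^{-1}$ error and the linear part are handled the same way. For the linear part, in normal coordinates the Christoffel symbols vanish at $0$, so $\mathrm D\Psi_n(0)[u]=(\nabla\Psi_n)_0[u]$. Writing $\nabla=D^{(1)}+(\nabla-D^{(1)})$ and using the definition \eqref{eq:Z-definition} of $\Upsilon_n$ gives $-g_0^{-1}(\nabla\Psi_n)_0[u]=\Upsilon_{n,0}[u]-u$ plus a term bilinear in $(\nabla-D^{(1)})_0$, $u$ and $\Psi_n(0)=-a_ng_0\zeta_{n,0}$. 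That term is $a_n(\nabla-D^{(1)})_0[u,\zeta_{n,0}]$ up to the slot convention. Since $D^{(0)}$ is the average of $D^{(1)}$ and $D^{(-1)}$, the difference $\nabla-D^{(1)}$ equals $(\nabla-D^{(-1)})-2(\nabla-D^{(0)})$. I must check that the torsion part cancels against the quadratic term below; otherwise that term must be absorbed there.

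\emph{Quadratic part: the main obstacle.} Here I must identify $-\tfrac12 g_0^{-1}\mathrm D^2\Psi^{\theta_0}(0)[u,u]$ with $-\tfrac12(\nabla-D^{(-1)})_0[u,u]$, plus whatever compensates the torsion term. I would differentiate the diagonal identity \eqref{eq:diagonal-first-identity} once more along the diagonal. This expresses the third jet $\Psi(X_1X_2X_3\mid)$ in terms of the mixed jets $\Psi(X_2X_3\mid X_1)$, which are given by $D^{(1)}$ through \eqref{eq:D1-definition}, and derivatives of the metric. Via \eqref{eq:precontrast-duality} these are in turn given by $D^{(-1)}$. Evaluated in normal coordinates, where $\partial g(0)=0$ and $\Gamma^\nabla(0)=0$, this should yield $\mathrm D^2\Psi^{\theta_0}(0)[u,u]=g_0(\nabla-D^{(-1)})_0[u,u]$ after symmetrization, using that $D^{(-1)}$ is torsion-free. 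Getting the signs and slot orders consistent with the $(\nabla-D^{(-1)})_0[u,\zeta_{n,0}-\tfrac12u]$ form in \eqref{eq:first-OU-correction} is the delicate bookkeeping.

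\emph{Assembling $\epsilon_n$.} Finally, collect all errors into
\[
  \epsilon_n:=C\Bigl(a_n+\sup_{|y|\le a_n^\kappa}|\mathrm D^2(\Psi_n-\Psi^{\theta_0})(y)|+a_n|\zeta_{n,0}|+a_n|\Upsilon_{n,0}|\Bigr).
\]
This is $o_{\Linftyminus(\theta_0)}(1)$ by \eqref{eq:zeta,Z-order} and \eqref{eq:A1-normal-chart}, and it gives \eqref{eq:local-OU-remainder} uniformly on $|u|\le a_n^{\kappa-1}$.
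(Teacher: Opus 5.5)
\textbf{Where the proposal agrees with the paper.} Your overall plan matches the paper's proof. You use the same normal-chart computation of $a_n^2L_n(\varphi\circ\bar u_n)$ and treat the diffusion and $V$ terms the same way. For the quadratic jet you follow exactly the paper's argument: differentiate $\Psi(U\widetilde U\mid)=\braket{\widetilde U,U}$ along the diagonal, apply \eqref{eq:D1-definition}, then \eqref{eq:precontrast-duality} with $\nabla U=0$.

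\textbf{The gap: the linear part.} You leave this step open, and the mechanism you propose for closing it cannot work. The cross term produced by $\nabla=D^{(1)}+(\nabla-D^{(1)})$ is
\[
  -g_0^{-1}\bigl\{\Psi_n(0)\bigl[(D^{(1)}-\nabla)_0[u,\cdot\,]\bigr]\bigr\}
  = a_n\,g_0^{-1}\braket{\zeta_{n,0},(D^{(1)}-\nabla)_0[u,\cdot\,]}.
\]
This is a $g_0$-transpose applied to $\zeta_{n,0}$. It is not $a_n(\nabla-D^{(1)})_0[u,\zeta_{n,0}]$ up to a slot convention; the sign also differs. In addition, your identity has the wrong sign: it should read $\nabla-D^{(1)}=2(\nabla-D^{(0)})-(\nabla-D^{(-1)})$.

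More seriously, neither fallback is available:
\begin{itemize}
\item Any torsion residue here is bilinear in $u$ and the random vector $\zeta_{n,0}$. The quadratic term $-\tfrac{a_n}{2}(\nabla-D^{(-1)})_0[u,u]$ is deterministic and free of $\zeta_{n,0}$, so no cancellation between them is possible.
\item A term of size $a_n|u|\,|\zeta_{n,0}|$, with $\zeta_{n,0}$ only $O_{\Linftyminus(\theta_0)}(1)$, cannot be absorbed into $a_n\epsilon_n(1+|u|^2)$ with $\epsilon_n=o_{\Linftyminus(\theta_0)}(1)$.
\end{itemize}

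\textbf{The missing ingredient.} Subtract the metric identity for $\nabla$ from \eqref{eq:precontrast-duality}. This gives, for all $X,Y,Z$ and with no torsion hypothesis on $D^{(1)}$,
\[
  \braket{(D^{(1)}-\nabla)_XY,Z}=\braket{Y,(\nabla-D^{(-1)})_XZ}.
\]
Hence the cross term is exactly $a_n(\nabla-D^{(-1)})_0[u,\zeta_{n,0}]$, and $D^{(0)}$ never appears. Combined with the quadratic term, this yields $a_n(\nabla-D^{(-1)})_0[u,\zeta_{n,0}-\tfrac12u]$, as the statement requires. The paper packages the same fact as $D^{(-1)}_X(g^{-1}\Psi_n)=X-\Upsilon_n[X]$. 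It then Taylor-expands the vector field $g^{-1}\Psi_n$ directly, which also avoids the separate $g^{-1}$ cross terms.

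\textbf{A minor point on $\epsilon_n$.} Two error terms are of size $a_n^2|u|^3$, not $a_n^2|u|^2$:
\begin{itemize}
\item Your bound on $\rho_n$ omits $\mathrm D^2\Psi^{\theta_0}(y)-\mathrm D^2\Psi^{\theta_0}(0)=O(|y|)$, since the Taylor remainder is taken against the value at $0$.
\item The term $a_n^{-1}(g^{-1}(x)-g_0^{-1})\mathrm D\Psi_n(0)[x]$ is of size $a_n^2|u|^3$.
\end{itemize}
On $|u|\le a_n^{\kappa-1}$ both are at most $a_n^{1+\kappa}|u|^2$. So $\epsilon_n$ needs an extra deterministic $Ca_n^{\kappa}$, which is harmless.
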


\begin{proof}
For $|u|\le a_n^{\kappa-1}$, by \eqref{baru=u},
\begin{equation}
\begin{aligned}
&a_n^2L_n(\varphi\circ\bar u_n)(\theta_n(u))
\\
&\quad=
\frac{\tau}{2}\mathrm D^2\varphi(u)[g^{-1}(a_nu)]
\\ & \quad \quad +\frac12\mathrm D\varphi(u)\Big[
  -a_n^{-1}(g^{-1}\Psi_n)(a_nu)
  +a_nV(a_nu)
  -\tau a_n\Gamma^\nabla(a_nu)[g^{-1}(a_nu)]
\Big],
\end{aligned}
\label{eq:normal-coordinate-generator}
\end{equation}
where $\Gamma^\nabla$ denotes the Christoffel symbol of $\nabla$ in this chart.

For $|u|\le a_n^{\kappa-1}$, Taylor's formula gives
\begin{equation}
\begin{aligned}
-a_n^{-1}(g^{-1}\Psi_n)(a_nu)
={}&
-a_n^{-1}(g^{-1}\Psi_n)_0
-\mathrm D(g^{-1}\Psi_n)(0)[u]
\\
&-\frac{a_n}{2}
  \mathrm D^2(g^{-1}\Psi^{\theta_0})(0)[u,u]
+r_n(u),
\end{aligned}
\label{eq:local-drift-Taylor}
\end{equation}
where
\[
\begin{aligned}
r_n(u)
:={}&
-a_n\int_0^1(1-t)
\Big\{
  \mathrm D^2(g^{-1}\Psi_n)(ta_nu)
  -\mathrm D^2(g^{-1}\Psi^{\theta_0})(0)
\Big\}[u,u] \,\dd t .
\end{aligned}
\]
We first establish the following relations:
\begin{align}
  \mathrm D(g^{-1}\Psi_n)(0)[u]
  &=
  u-\Upsilon_{n,0}[u]
  -a_n(\nabla-D^{(-1)})_0[u,\zeta_{n,0}],
  \label{eq:local-drift-first-jet}
  \\
  \mathrm D^2(g^{-1}\Psi^{\theta_0})(0)[u,u]
  &=
  (\nabla-D^{(-1)})_0[u,u],
  \label{eq:local-drift-second-jet}
  \\
  \sup_{|x|\le a_n^\kappa}
  \left|
    \mathrm D^2(g^{-1}\Psi_n)(x)
    -
    \mathrm D^2(g^{-1}\Psi^{\theta_0})(0)
  \right|
  &=
  o_{\Linftyminus(\theta_0)}(1).
  \label{eq:local-drift-second-jet-control}
\end{align}

We first prove \eqref{eq:local-drift-first-jet}. Extend
$u\in T_{\theta_0}\Theta$ to a vector field $U$. Since $\log_{\theta_0}$ is $\nabla$-normal at $\theta_0$,
\[
  \mathrm D(g^{-1}\Psi_n)(0)[u]
  =
  \bigl(\nabla_{U}(g^{-1}\Psi_n)\bigr)_0.
\]
By the duality of $D^{(1)}$ and $D^{(-1)}$, for local vector fields
$X,Y$,
\begin{align*}
  \left\langle
    D^{(-1)}_X(g^{-1}\Psi_n),Y
  \right\rangle
  &=
  X\left\langle g^{-1}\Psi_n,Y\right\rangle
  -
  \left\langle g^{-1}\Psi_n,D^{(1)}_X Y\right\rangle
\\
  &=
  X\{\Psi_n[Y]\}
  -
  \Psi_n[D^{(1)}_X Y]
\\
  &=
  (D^{(1)}_X\Psi_n)[Y].
\end{align*}
By the definition of $\Upsilon_n$ in \eqref{eq:Z-definition},
\[
  (D^{(1)}_X\Psi_n)[Y]
  =
  \langle X-\Upsilon_n[X],Y\rangle.
\]
Since $Y$ is arbitrary,
\begin{equation}
  D^{(-1)}_X(g^{-1}\Psi_n)
  =
  X-\Upsilon_n[X].
  \label{eq:Dminus1-Psi-sharp}
\end{equation}
Using \eqref{eq:Dminus1-Psi-sharp} and
$(g^{-1}\Psi_n)_0=-a_n\zeta_{n,0}$, we obtain
\begin{align*}
  \mathrm D(g^{-1}\Psi_n)(0)[u]
  &=
  \bigl(D^{(-1)}_{U}(g^{-1}\Psi_n)\bigr)_0
  +
  (\nabla-D^{(-1)})_0
  [u,(g^{-1}\Psi_n)_0]
\\
  &=
  u-\Upsilon_{n,0}[u]
  -a_n(\nabla-D^{(-1)})_0[u,\zeta_{n,0}],
\end{align*}
which proves \eqref{eq:local-drift-first-jet}.

We next prove \eqref{eq:local-drift-second-jet}. Let
$\widetilde u\in T_{\theta_0}\Theta$ and extend $u,\widetilde u$ to
the constant vector fields $U,\widetilde U$ in the normal chart, respectively. Since
$\Psi^{\theta_0}(\theta_0)=0$, we have
$(g^{-1}\Psi^{\theta_0})_0=0$, while normality of the chart gives
$\mathrm Dg(0)=0$. Hence
\begin{align}
  \left\langle
    \mathrm D^2(g^{-1}\Psi^{\theta_0})(0)[u,u],\widetilde u
  \right\rangle
  &=
  UU\{\Psi^{\theta_0}[\widetilde U]\}(\theta_0)
  \notag\\
  &=
  \Psi(UU\widetilde U\mid)(\theta_0).
  \label{eq:second-jet-precontrast}
\end{align}
By \eqref{eq:precontrast-metric},
\[
  \Psi(U\widetilde U\mid)=\langle \widetilde U,U\rangle.
\]
Differentiating this identity along the diagonal in the direction $U$
and evaluating at $\theta_0$ gives
\[
  \Psi(UU\widetilde U\mid)(\theta_0)
  +
  \Psi(U\widetilde U\mid U)(\theta_0)
  =
  U\langle \widetilde U,U\rangle(\theta_0).
\]
Since $U,\widetilde U$ are constant in the normal chart, $U\langle \widetilde U,U\rangle(\theta_0)=0$. Therefore,
by \eqref{eq:D1-definition},
\[
  \Psi(UU\widetilde U\mid)(\theta_0)
  =
  -\Psi(U\widetilde U\mid U)(\theta_0)
  =
  \left\langle (D^{(1)}_{U}\widetilde U)_0,u\right\rangle.
\]
The duality relation \eqref{eq:precontrast-duality} and $\nabla U =0$ give
\[
\begin{aligned}
  \left\langle (D^{(1)}_{U}\widetilde U)_0,u\right\rangle
  &=
  -\left\langle \widetilde u,(D^{(-1)}_{U}U)_0\right\rangle
\\
  &=
  \left\langle
    (\nabla-D^{(-1)})_0[u,u],\widetilde u
  \right\rangle.
\end{aligned}
\]
Combining this with \eqref{eq:second-jet-precontrast} and using the
arbitrariness of $\widetilde u$ proves \eqref{eq:local-drift-second-jet}.

We finally prove \eqref{eq:local-drift-second-jet-control}.
Since $a_n^\kappa\to0$, \eqref{eq:A1-normal-chart} and the product rule give
\[
  \sup_{|x|\le a_n^\kappa}
  \left|
    \mathrm D^2(g^{-1}\Psi_n)(x)
    -
    \mathrm D^2(g^{-1}\Psi^{\theta_0})(x)
  \right|
  =
  o_{\Linftyminus(\theta_0)}(1).
\]
Since $g^{-1}\Psi^{\theta_0}$ is smooth,
\[
  \sup_{|x|\le a_n^\kappa}
  \left|
    \mathrm D^2(g^{-1}\Psi^{\theta_0})(x)
    -
    \mathrm D^2(g^{-1}\Psi^{\theta_0})(0)
  \right|
  =
  o(1).
\]
This proves \eqref{eq:local-drift-second-jet-control}.

Combining \eqref{eq:local-drift-Taylor},
\eqref{eq:local-drift-first-jet},
\eqref{eq:local-drift-second-jet}, and
\eqref{eq:local-drift-second-jet-control}, together with the definition of
$\zeta_n$, yields
\begin{equation}
\begin{aligned}
  -a_n^{-1}(g^{-1}\Psi_n)(a_nu)
  ={}&
  \zeta_{n,0}-u+\Upsilon_{n,0}[u]
\\
  &+
  a_n(\nabla-D^{(-1)})_0
  \left[u,\zeta_{n,0}-\frac12u\right]
  +r_n(u),
\end{aligned}
\label{eq:local-drift-expansion}
\end{equation}
where
\begin{equation}
  \sup_{|u|\le a_n^{\kappa-1}}
  \frac{|r_n(u)|}{1+|u|^2}
  =
  o_{\Linftyminus(\theta_0)}(a_n).
  \label{eq:local-drift-remainder}
\end{equation}

Finally, normal coordinates give
\[
  g^{-1}(x)=g_0^{-1}+O(|x|^2),
  \qquad
  \Gamma^\nabla(x)=O(|x|),
  \qquad
  V(x)=V_0+O(|x|).
\]
Substituting these estimates and \eqref{eq:local-drift-expansion} into
\eqref{eq:normal-coordinate-generator} gives
\[
\begin{aligned}
  |\mathcal R_n\varphi(u)|
  \le{}&
  \frac12|r_n(u)|\,|\mathrm D\varphi(u)|
\\
  &+
  Ca_n^2(1+|u|^2)
  \left\{
    |\mathrm D\varphi(u)|+|\mathrm D^2\varphi(u)|
  \right\},
\end{aligned}
\]
uniformly on $|u|\le a_n^{\kappa-1}$. Together with
\eqref{eq:local-drift-remainder}, this proves
\eqref{eq:local-OU-remainder}.
\end{proof}
\begin{proposition}
\label{prop:OU-scale-moment-bound}
Under Assumption~\textup{[A1]} and Assumption~\textup{[A2]}$(\mu_n)$, let
$q_n:T_{\theta_0}\Theta\to\R$ be random polynomials of degree at most
$M$ whose coefficients are $O_{\Linftyminus(\theta_0)}(1)$, in the sense that
\[
  \mathrm D^j q_n(0)
  =
  O_{\Linftyminus(\theta_0)}(1),
  \qquad
  j=0,\ldots,M.
\]
Then
\begin{align}
  \int_\Theta |q_n(\bar u_n)|\,\dd\mu_n
  &=
  O_{\Linftyminus(\theta_0)}(1),
  \label{eq:globalized-OU-moment-bound}
  \\
  \int_{O_n^c}|q_n(\bar u_n)|\,\dd\mu_n
  &=
  O_{\Linftyminus(\theta_0)}(a_n^L),
  \label{eq:polynomial-tail-bound}
  \\
  \int_{O_n}
  a_n^2L_n(q_n\circ\bar u_n)(\theta)\,\mu_n(\dd\theta)
  &=
  O_{\Linftyminus(\theta_0)}(a_n^L)
  \label{eq:polynomial-generator-tail}
\end{align}
for every $L>0$. If the coefficients of $q_n$ are
$o_{\Linftyminus(\theta_0)}(1)$, then
\eqref{eq:globalized-OU-moment-bound} holds with
$o_{\Linftyminus(\theta_0)}(1)$ in place of
$O_{\Linftyminus(\theta_0)}(1)$.
\end{proposition}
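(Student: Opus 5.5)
The plan is to combine three ingredients: the invariance identity $\int_\Theta L_n f\,\dd\mu_n=0$, the local generator decomposition of Proposition~\ref{prop:local-OU-reduction}, and the localization in Assumption~\textup{[A2]}$(\mu_n)$. The invariance identity holds for every smooth $f$ supported in $\Theta^\circ$; this covers $f=\varphi\circ\bar u_n$, because $\supp\bar x\Subset\Theta^\circ$, so the Neumann condition is trivially satisfied. Since $|q_n(u)|\le C\,\Lambda_n(1+|u|^M)$, where $\Lambda_n:=\sum_j|\mathrm D^jq_n(0)|$, and $\Lambda_n$ is $O_{\Linftyminus}(1)$ (resp.\ $o_{\Linftyminus}(1)$), Hölder's inequality reduces everything to deterministic polynomials. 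In particular, \eqref{eq:globalized-OU-moment-bound} and its $o$-version reduce to showing $\int_\Theta|\bar u_n|^{2k}\,\dd\mu_n=O_{\Linftyminus(\theta_0)}(1)$ for each $k\in\mathbb N$.

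\emph{Tail bounds.} I would prove these first. On $O_n^c$ we have $|\bar u_n|\le Ca_n^{-1}$, so $\int_{O_n^c}|q_n(\bar u_n)|\,\dd\mu_n\le C\Lambda_na_n^{-M}\mu_n(O_n^c)$. Because $0\le\mu_n(O_n^c)\le1$, we get $\E[\mu_n(O_n^c)^p]\le\E[\mu_n(O_n^c)]=O(a_n^{L'})$ for every $L'$. Hölder's inequality then gives \eqref{eq:polynomial-tail-bound}.

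For \eqref{eq:polynomial-generator-tail}, invariance rewrites the integral over $O_n$ as $-\int_{O_n^c}a_n^2L_n(q_n\circ\bar u_n)\,\dd\mu_n$. On $O_n^c$, the integrand vanishes outside $\supp\bar x$. On $\supp\bar x$ it is bounded by $C\Lambda_na_n^{-K}$ for some fixed $K$, times $\bigl(1+\sup_{\supp\bar x}|\Psi_n|\bigr)$. The supremum of $|\Psi_n|$ has all moments by \eqref{eq:A1-normal-chart}: I would either shrink $r_0$ further or cover $\supp\bar x$ using the uniform smoothness of $\Psi^{\theta_0}$. Hölder against $\mu_n(O_n^c)^{1/2}$ then gives $O(a_n^L)$ again.

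\emph{Moment bound by a Lyapunov argument.} I take $\varphi_k(u):=|u|^{2k}$, with the norm given by $g_0$, and induct on $k$; the case $k=0$ is trivial. On $O_n$, Proposition~\ref{prop:local-OU-reduction} gives
\[
a_n^2L_n(\varphi_k\circ\bar u_n)=-k|u|^{2k}+k\langle u,\zeta_{n,0}\rangle|u|^{2k-2}+\tfrac{\tau}{2}\mathrm D^2\varphi_k[g_0^{-1}]+\frakL_n^{[1]}\varphi_k+\mathcal R_n\varphi_k .
\]
Every term except the first is bounded by $\eta_n|u|^{2k}+C_n(1+|u|^{2k-1})$, where $C_n=O_{\Linftyminus}(1)$ and
\[
\eta_n:=C\bigl(|\Upsilon_{n,0}|+a_n^{\kappa}(1+|\zeta_{n,0}|)+a_n^{\kappa}\epsilon_n\bigr).
\]
To see this, use $|u|\le a_n^{\kappa-1}$ to convert $a_n|u|^{2k+1}$ into $a_n^\kappa|u|^{2k}$, both in the $(\nabla-D^{(-1)})$ term and in \eqref{eq:local-OU-remainder}. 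Integrating over $O_n$ and applying \eqref{eq:polynomial-generator-tail} gives
\[
(k-\eta_n)\int_{O_n}|\bar u_n|^{2k}\,\dd\mu_n\le C_n\Bigl(1+\int|\bar u_n|^{2k-1}\,\dd\mu_n\Bigr)+O_{\Linftyminus}(a_n^L).
\]
The lower-order integral is handled by Young's inequality together with the induction hypothesis, or directly by absorption.

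\emph{Main obstacle.} The crux is that $\eta_n$ is random, so we cannot simply divide by $k-\eta_n$. The point is that $\eta_n$ is small with overwhelming probability, not merely $o_{\Linftyminus}(1)$. Indeed, $\Upsilon_{n,0}=o_{\Linftyminus}(a_n^{1/2})$ by \eqref{eq:zeta,Z-order}, while $a_n^\kappa(1+|\zeta_{n,0}|)$ and $a_n^\kappa\epsilon_n$ carry explicit factors $a_n^\kappa$. Markov's inequality with high moments therefore gives $P(\eta_n>k/2)=O(a_n^L)$ for every $L$. On $\{\eta_n\le k/2\}$ we divide and get the $L^p$ bound. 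On the complement we use the deterministic bound $\int|\bar u_n|^{2k}\,\dd\mu_n\le Ca_n^{-2k}$ times $P(\eta_n>k/2)^{1/p}$, which is $O(a_n^{L})$. Adding the tail \eqref{eq:polynomial-tail-bound} over $O_n^c$ completes the induction and yields \eqref{eq:globalized-OU-moment-bound}. The $o$-version then follows from $\Lambda_n=o_{\Linftyminus}(1)$ and Hölder's inequality.
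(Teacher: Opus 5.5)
Your strategy matches the paper's in all essentials. You use invariance of $\mu_n$ together with Assumption~\textup{[A2]}$(\mu_n)$ to make the generator integral over $O_n$ negligible. You then build a Lyapunov inequality on $O_n$ from Proposition~\ref{prop:local-OU-reduction}, control the random perturbation $\eta_n$ off an event of probability $O(a_n^L)$, and handle the complement with the crude bound $|\bar u_n|\le Ca_n^{-1}$. The differences are immaterial. You use $|u|^{2k}$ and divide the pathwise inequality on $\{\eta_n\le k/2\}$ to get $L^p$ bounds directly. The paper instead uses $(1+|u|^2)^M$, proves an $L^1$ bound, and upgrades it by Jensen's inequality with $M$ replaced by $Mp$.

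There is one step that does not hold as you justify it. In the proof of \eqref{eq:polynomial-generator-tail} you claim that $\sup_{\supp\bar x}|\Psi_n|$ has all moments, but for the fixed cutoff $\chi$ this does not follow from the assumptions.
\begin{itemize}
\item The bound \eqref{eq:A1-normal-chart} gives, for each fixed $p$, a radius $r_p$ on which the supremum is bounded in $L^p$, and $r_p$ may shrink as $p\to\infty$.
\item On the fixed set $\supp\bar x$ you only have the $L^2$ bound \eqref{eq:A1-fixed-normal-bound}. Combined with H\"older's inequality against $\mu_n(O_n^c)$, this gives \eqref{eq:polynomial-generator-tail} only in $L^p$ for $p<2$.
\item Shrinking $r_0$ once does not repair this.
\item Covering $\supp\bar x$ is not available either, since \textup{[A1]} says nothing about $\Psi_n$ on balls not centered at $\theta_0$.
\end{itemize}

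The fix, which is what the paper does, is to let the cutoff depend on $p$. Choose $\chi_p$ supported in $B_{T_{\theta_0}\Theta}(0,r_p)$ with $\chi_p=1$ on $B_{T_{\theta_0}\Theta}(0,r_p/2)$, and let $\bar u_{n,p}$ be the corresponding rescaled map. For large $n$, $\bar u_{n,p}=\bar u_n$ on the open set $O_n$, so the left-hand side of \eqref{eq:polynomial-generator-tail} is unchanged. Invariance applied to $q_n\circ\bar u_{n,p}$, followed by H\"older's inequality with $\|\mu_n(O_n^c)\|_{L^{2p}}$, then needs only an $L^{4p}$ bound on $\sup_{B_\Theta(\theta_0,r_p)}|\Psi_n|$, which \textup{[A1]} supplies. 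With this modification the rest of your argument, including the Lyapunov step, goes through.
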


\begin{proof}
Throughout the proof, $c,C>0$ denote deterministic constants independent
of $n$, whose values may change from line to line.

We first prove \eqref{eq:polynomial-generator-tail}. Set
\[
  C_n
  :=
  \sum_{j=0}^M|\mathrm D^jq_n(0)|
  =
  O_{\Linftyminus(\theta_0)}(1).
\]
By polynomiality, for $j=1,2$,
\begin{equation}
  |\mathrm D^jq_n(u)|
  \le
  CC_n(1+|u|)^{M-j}.
  \label{eq:polynomial-derivative-bound}
\end{equation}
Fix $p\ge1$. By \eqref{eq:A1-normal-chart}, there exists
$r_p\in(0,r_0)$ such that, for all sufficiently large $n$,
\[
  \left\|
    \sup_{|x|\le r_p}
    \sum_{j=0}^2
    \left|
      \mathrm D^j(\Psi_n-\Psi^{\theta_0})(x)
    \right|
  \right\|_{L^{4p}(P_{n,\theta_0})}
  =
  O(1).
\]
Choose $\chi_p\in C_c^\infty(B_{T_{\theta_0}\Theta}(0,r_p))$
such that $\chi_p=1$ on
$B_{T_{\theta_0}\Theta}(0,r_p/2)$, and define
\[
  \bar x_p(\theta)
  :=
  \begin{cases}
    \chi_p(\log_{\theta_0}\theta)\log_{\theta_0}\theta,
    &\theta\in B_\Theta(\theta_0,r_p),\\
    0,
    &\theta\notin B_\Theta(\theta_0,r_p),
  \end{cases}
  \qquad
  \bar u_{n,p}:=a_n^{-1}\bar x_p.
\]
Thus $\bar x_p$ is obtained from the right-hand side of
\eqref{eq:barx-definition} by replacing $\chi$ with $\chi_p$.
Since $a_n^\kappa\to0$, for all sufficiently large $n$,
\[
  \bar u_{n,p}=\bar u_n
  \qquad\text{on }O_n.
\]
Moreover, by the preceding $L^{4p}$ bound and the smoothness of
$\Psi^{\theta_0}$,
\begin{equation}
  \sup_{\theta\in\supp\bar x_p}|\Psi_n(\theta)|
  =
  O_{L^{4p}(P_{n,\theta_0})}(1).
  \label{eq:Psi-support-bound}
\end{equation}
Since $\bar x_p$ is a fixed smooth map with compact support in
$\Theta^\circ$, the chain rule and
\eqref{eq:polynomial-derivative-bound} give
\[
  \sup_{\theta\in\Theta}
  \left\{
    |\dd(q_n\circ\bar u_{n,p})(\theta)|
    +
    |\Delta(q_n\circ\bar u_{n,p})(\theta)|
  \right\}
  \le
  CC_na_n^{-M}.
\]
Together with \eqref{eq:Psi-support-bound} and
\eqref{eq:full-generator}, H\"older's inequality yields
\begin{equation}
  \left\|
    \sup_{\theta\in\Theta}
    \left|
      a_n^2L_n(q_n\circ\bar u_{n,p})(\theta)
    \right|
  \right\|_{L^{2p}(P_{n,\theta_0})}
  =
  O(a_n^{-M}).
  \label{eq:polynomial-generator-rough-bound}
\end{equation}

By
$q_n(\bar u_{n,p})-q_n(0)\in C_c^\infty(\Theta^\circ)$,
\[
  \int_\Theta
  a_n^2L_n(q_n\circ\bar u_{n,p})(\theta)\,\mu_n(\dd\theta)
  =
  0.
\]
Hence, for all sufficiently large $n$,
\[
\begin{aligned}
  \left|
    \int_{O_n}
    a_n^2L_n(q_n\circ\bar u_n)(\theta)\,\mu_n(\dd\theta)
  \right|
  &=
  \left|
    \int_{O_n^c}
    a_n^2L_n(q_n\circ\bar u_{n,p})(\theta)\,\mu_n(\dd\theta)
  \right|
  \\
  &\le
  \mu_n(O_n^c)
  \sup_{\theta\in\Theta}
  \left|
    a_n^2L_n(q_n\circ\bar u_{n,p})(\theta)
  \right|.
\end{aligned}
\]
Since $0\le\mu_n(O_n^c)\le1$, Assumption~\textup{[A2]}$(\mu_n)$
implies, for every $K>0$,
\[
  \|\mu_n(O_n^c)\|_{L^{2p}(P_{n,\theta_0})}
  \le
  \{\E_{n,\theta_0}[\mu_n(O_n^c)]\}^{1/(2p)}
  =
  O(a_n^K).
\]
Taking $K=L+M$ and applying H\"older's inequality together with
\eqref{eq:polynomial-generator-rough-bound} gives
\[
  \left\|
    \int_{O_n}
    a_n^2L_n(q_n\circ\bar u_n)(\theta)\,\mu_n(\dd\theta)
  \right\|_{L^p(P_{n,\theta_0})}
  =
  O(a_n^L).
\]
Since $p<\infty$ is arbitrary, this proves
\eqref{eq:polynomial-generator-tail}.

We next prove the moment bound. Set
\[
  w(u):=(1+|u|^2)^M.
\]
We first show that
\begin{equation}
  \int_\Theta w(\bar u_n)\,\dd\mu_n
  =
  O_{\Linftyminus(\theta_0)}(1).
  \label{eq:OU-moment-bound}
\end{equation}
For $j=1,2$,
\begin{equation}
  |\mathrm D^jw(u)|
  \le
  C(1+|u|^2)^{M-j/2}.
  \label{eq:w-derivative-bound}
\end{equation}
Moreover,
\begin{equation}
  \sup_{\theta\in\Theta}
  w(\bar u_n(\theta))
  \le
  Ca_n^{-2M}.
  \label{eq:w-global-rough-bound}
\end{equation}
Since $w$ is a deterministic polynomial of degree $2M$, the estimate already
proved in \eqref{eq:polynomial-generator-tail}, with $2M$ in place of $M$,
gives
\begin{equation}
  \int_{O_n}
  a_n^2L_n(w\circ\bar u_n)(\theta)\,\mu_n(\dd\theta)
  =
  O_{\Linftyminus(\theta_0)}(a_n^L)
  \qquad
  \text{for every }L>0.
  \label{eq:OU-local-stationarity-error}
\end{equation}

Direct differentiation and Young's inequality give
\[
  \frakL_n^{[0]}w(u)
  \le
  -cw(u)
  +C(1+|\zeta_{n,0}|^{2M}).
\]
By Proposition~\ref{prop:local-OU-reduction} and
\eqref{eq:w-derivative-bound}, there exists a nonnegative random variable
$\eta_n$ such that
\[
\begin{aligned}
  a_n^2L_n(w\circ\bar u_n)(\theta_n(u))
  \le{}&
  -(c-\eta_n)w(u)
  +C(1+|\zeta_{n,0}|^{2M}),
\\[-1mm]
&\hspace{38mm}|u|\le a_n^{\kappa-1},
\end{aligned}
\]
where
\[
  \eta_n
  \le
  C\left\{
    |\Upsilon_{n,0}|
    +a_n(1+|\zeta_{n,0}|)
    +a_n^\kappa
    +(a_n+a_n^\kappa)|\epsilon_n|
  \right\}.
\]
Here $\epsilon_n=o_{\Linftyminus(\theta_0)}(1)$ is as in
\eqref{eq:local-OU-remainder}. By \eqref{eq:zeta,Z-order},
\[
  \zeta_{n,0}=O_{\Linftyminus(\theta_0)}(1),
  \qquad
  \Upsilon_{n,0}=o_{\Linftyminus(\theta_0)}(a_n^{1/2}),
\]
and therefore
\[
  \eta_n
  =
  O_{\Linftyminus(\theta_0)}
  \bigl(a_n^{\min\{\kappa,1/2\}}\bigr).
\]
Set $G_n:=\{\eta_n\le c/2\}$. Markov's inequality gives
\begin{equation}
  P_{n,\theta_0}(G_n^c)
  =
  O(a_n^L)
  \qquad
  \text{for every }L>0.
  \label{eq:OU-good-event}
\end{equation}

On $G_n$,
\[
  a_n^2L_n(w\circ\bar u_n)(\theta)
  \le
  -\frac c2w(\bar u_n(\theta))
  +C(1+|\zeta_{n,0}|^{2M}),
  \qquad
  \theta\in O_n.
\]
Hence
\[
\begin{aligned}
  \frac c2\mathbf 1_{G_n}
  \int_{O_n}w(\bar u_n)\,\dd\mu_n
  \le{}&
  C\mathbf 1_{G_n}(1+|\zeta_{n,0}|^{2M})
  \\
  &-
  \mathbf 1_{G_n}
  \int_{O_n}
  a_n^2L_n(w\circ\bar u_n)\,\dd\mu_n .
\end{aligned}
\]
Taking expectations, using \eqref{eq:OU-local-stationarity-error}, and
using $\zeta_{n,0}=O_{\Linftyminus(\theta_0)}(1)$ gives
\begin{equation}
  \E_{n,\theta_0}\left[
    \mathbf 1_{G_n}
    \int_{O_n}w(\bar u_n)\,\dd\mu_n
  \right]
  =
  O(1).
  \label{eq:OU-moment-good}
\end{equation}
By \eqref{eq:w-global-rough-bound}, Assumption~\textup{[A2]}$(\mu_n)$, and
\eqref{eq:OU-good-event},
\[
  \E_{n,\theta_0}\left[
    \int_{O_n^c}w(\bar u_n)\,\dd\mu_n
  \right]
  =
  O(1),
\]
and
\[
  \E_{n,\theta_0}\left[
    \mathbf 1_{G_n^c}
    \int_\Theta w(\bar u_n)\,\dd\mu_n
  \right]
  =
  O(1).
\]
Together with \eqref{eq:OU-moment-good}, this yields
\[
  \E_{n,\theta_0}\left[
    \int_\Theta w(\bar u_n)\,\dd\mu_n
  \right]
  =
  O(1).
\]
For every $p\in\mathbb N$, repeating the preceding argument with $M$
replaced by $Mp$ and using Jensen's inequality give
\[
\begin{aligned}
  \E_{n,\theta_0}\left[
    \left\{
      \int_\Theta w(\bar u_n)\,\dd\mu_n
    \right\}^p
  \right]
  &\le
  \E_{n,\theta_0}\left[
    \int_\Theta
    (1+|\bar u_n|^2)^{Mp}\,\dd\mu_n
  \right]
  \\
  &=
  O(1).
\end{aligned}
\]
This proves \eqref{eq:OU-moment-bound}.

Taylor's formula gives
\[
  |q_n(u)|
  \le
C  C_nw(u).
\]
Hence
\[
  \int_\Theta |q_n(\bar u_n)|\,\dd\mu_n
  \le
  CC_n\int_\Theta w(\bar u_n)\,\dd\mu_n
  =
  O_{\Linftyminus(\theta_0)}(1),
\]
which proves \eqref{eq:globalized-OU-moment-bound}. The same estimate gives
$o_{\Linftyminus(\theta_0)}(1)$ when the coefficients of $q_n$ are
$o_{\Linftyminus(\theta_0)}(1)$. Moreover,
$|\bar u_n|\le Ca_n^{-1}$, so
\[
\begin{aligned}
  \int_{O_n^c}|q_n(\bar u_n)|\,\dd\mu_n
  &\le
C  C_n\int_{O_n^c}w(\bar u_n)\,\dd\mu_n
  \\
  &\le
  CC_na_n^{-2M}\mu_n(O_n^c)
  =
  O_{\Linftyminus(\theta_0)}(a_n^L)
\end{aligned}
\]
for every $L>0$, by Assumption~\textup{[A2]}$(\mu_n)$ and H\"older's inequality. This proves \eqref{eq:polynomial-tail-bound}.
\end{proof}

\subsection{Global expansion of invariant averages}
\label{subsec:proof-OU-Poisson}

Let $N$ be a $T_{\theta_0}\Theta$-valued random variable independent of
$\Psi_n$ with law
\[
  N(0,\tau g_0^{-1});
\]
equivalently, for every $\alpha\in T_{\theta_0}^*\Theta$,
\[
  \alpha[N]\sim
  N\!\left(0,\tau\braket{\alpha,\alpha}\right),
\]
where, when $\tau=0$, the law is understood as $\delta_0$.
We write $\E_N$ for expectation with respect to $N$ only.

Consider the centered Ornstein--Uhlenbeck operator
\begin{equation}
  \frakL^{[0]}\varphi(u)
  :=
  \frac{\tau}{2}\mathrm D^2\varphi(u)[g_0^{-1}]
  -\frac12\mathrm D\varphi(u)[u].
  \label{eq:centered-OU-operator}
\end{equation}
By standard Ornstein--Uhlenbeck theory (with the case $\tau=0$ being
immediate), there exists a unique linear operator $\calP$ on polynomials
on $T_{\theta_0}\Theta$ such that
\begin{equation}
  -\frakL^{[0]}\calP q
  =q-\E_N[q(N)],
  \qquad
  \E_N[(\calP q)(N)]=0
  \label{eq:centered-OU-Poisson-equation}
\end{equation}
for every polynomial $q$. By the definition, $\calP$ preserves polynomial degree.

For a polynomial $q$, define
\[
  (\calP_nq)(u)
  :=
  \left[
    \calP\{q(\,\cdot+\zeta_{n,0})\}
  \right](u-\zeta_{n,0}).
\]
By \eqref{eq:leading-OU-operator} and \eqref{eq:centered-OU-operator},
\begin{equation}
  \frakL_n^{[0]}q(u)
  =
  \left[
    \frakL^{[0]}\{q(\,\cdot+\zeta_{n,0})\}
  \right](u-\zeta_{n,0}).
  \label{eq:OU-shift-identity}
\end{equation}
Set
\begin{equation}
  N_n:=\zeta_{n,0}+N.
  \label{eq:leading-OU-law}
\end{equation}
It follows from \eqref{eq:centered-OU-Poisson-equation} and
\eqref{eq:OU-shift-identity} that
\begin{equation}
  -\frakL_n^{[0]}\calP_nq
  =q-\E_N[q(N_n)],
  \qquad
  \E_N[(\calP_nq)(N_n)]=0.
  \label{eq:OU-Poisson-equation}
\end{equation}

Recall from \eqref{eq:zeta,Z-order} that \(
  \zeta_{n,0}=O_{\Linftyminus(\theta_0)}(1)\) and \( \Upsilon_{n,0}=o_{\Linftyminus(\theta_0)}(a_n^{1/2})
\). Since $\calP$ preserves polynomial degree, $\calP_n$ preserves polynomial
degree and coefficient order: if $q_n$ has degree at most $M$ and all its
coefficients are $O_{\Linftyminus(\theta_0)}(1)$
(resp.\ $o_{\Linftyminus(\theta_0)}(1)$), then $\calP_nq_n$ has degree at
most $M$ and all its coefficients are
$O_{\Linftyminus(\theta_0)}(1)$
(resp.\ $o_{\Linftyminus(\theta_0)}(1)$).

For polynomials with values in a finite-dimensional vector space, we let
$\calP$, $\calP_n$, $\frakL^{[0]}$, $\frakL_n^{[0]}$, and
$\frakL_n^{[1]}$ act componentwise with respect to any basis.

\begin{proposition}
\label{prop:polynomial-Poisson-expansion}
Under Assumption~\textup{[A1]} and Assumption~\textup{[A2]}$(\mu_n)$, let
$q_n:T_{\theta_0}\Theta\to\R$ be random polynomials of degree at most
$M$ whose coefficients are $O_{\Linftyminus(\theta_0)}(1)$. Then
\begin{equation}
\begin{aligned}
  \int_\Theta q_n(\bar u_n)\,\dd\mu_n
  ={}&
  \E_N[q_n(N_n)]
  +\E_N\!\left[
    (\frakL_n^{[1]}\calP_nq_n)(N_n)
  \right]
  +o_{\Linftyminus(\theta_0)}(a_n).
  \label{eq:polynomial-Poisson-expansion}
\end{aligned}
\end{equation}
The same conclusion holds, under the same degree and coefficient-order
assumptions, for polynomials with values in any finite-dimensional vector space.
\end{proposition}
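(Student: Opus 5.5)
The plan is to apply stationarity to the Poisson solution $\calP_nq_n$ and then expand the generator with Proposition~\ref{prop:local-OU-reduction}. Set $p_n:=\calP_nq_n$. By the remarks preceding the statement, $p_n$ is a random polynomial of degree at most $M$ with $O_{\Linftyminus(\theta_0)}(1)$ coefficients. Applying \eqref{eq:polynomial-generator-tail} to $p_n$ gives
\[
  \int_{O_n}a_n^2L_n(p_n\circ\bar u_n)\,\dd\mu_n=O_{\Linftyminus(\theta_0)}(a_n^L).
\]
On $O_n$ we have $\theta=\theta_n(u)$ with $u=\bar u_n(\theta)$ and $|u|\le a_n^{\kappa-1}$. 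Proposition~\ref{prop:local-OU-reduction} therefore rewrites the integrand as $\frakL_n^{[0]}p_n+\frakL_n^{[1]}p_n+\mathcal R_np_n$ evaluated at $\bar u_n$. The Poisson equation \eqref{eq:OU-Poisson-equation} replaces $\frakL_n^{[0]}p_n$ by $-q_n+\E_N[q_n(N_n)]$. This yields
\[
  \int_{O_n}q_n(\bar u_n)\,\dd\mu_n=\E_N[q_n(N_n)]\,\mu_n(O_n)+\int_{O_n}(\frakL_n^{[1]}p_n)(\bar u_n)\,\dd\mu_n+\int_{O_n}(\mathcal R_np_n)(\bar u_n)\,\dd\mu_n+O_{\Linftyminus}(a_n^L).
\]
The contributions from $O_n^c$ are handled by \eqref{eq:polynomial-tail-bound}. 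Also $1-\mu_n(O_n)=O_{\Linftyminus}(a_n^L)$ by [A2] and boundedness, and $\E_N[q_n(N_n)]=O_{\Linftyminus}(1)$.

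For the remainder, \eqref{eq:local-OU-remainder} gives
\[
  |\mathcal R_np_n(u)|\le a_n\epsilon_n(1+|u|^2)\bigl(|\mathrm Dp_n|+|\mathrm D^2p_n|\bigr)\le a_n\epsilon_nC_n(1+|u|)^{M+2}.
\]
Integrating against $\mu_n$ and using \eqref{eq:globalized-OU-moment-bound} with a deterministic polynomial weight, together with H\"older, bounds this term by $a_n\epsilon_n\cdot O_{\Linftyminus}(1)=o_{\Linftyminus}(a_n)$.

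The first-order term $\frakL_n^{[1]}p_n$ is itself a random polynomial, of degree at most $M+1$. Its coefficients split into an $O_{\Linftyminus}(a_n)$ part, coming from $a_nV_0$ and $a_n(\nabla-D^{(-1)})_0[u,\zeta_{n,0}-u/2]$, and an $o_{\Linftyminus}(a_n^{1/2})$ part, coming from $\Upsilon_{n,0}[u]$. Its integral must be identified with $\E_N[(\frakL_n^{[1]}p_n)(N_n)]$ up to $o(a_n)$. I would do this by applying the argument above a second time, i.e.\ bootstrapping. First, taking $q_n$ arbitrary, the argument with the crude bound (every coefficient of $\frakL_n^{[1]}p_n$ is $o_{\Linftyminus}(1)$, so its integral is $o_{\Linftyminus}(1)$ by the $o$-version of \eqref{eq:globalized-OU-moment-bound}) gives a zeroth-order expansion
\[
  \int q(\bar u_n)\,\dd\mu_n=\E_N[q(N_n)]+o_{\Linftyminus}(1)\cdot(\text{coefficient size}),
\]
and more precisely an error of order $\bigl(|\Upsilon_{n,0}|+a_n(1+|\zeta_{n,0}|)\bigr)\times O_{\Linftyminus}(1)+o(a_n)$. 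Then I apply this expansion to $r_n:=\frakL_n^{[1]}p_n$. Writing $r_n=s_n\cdot\eta_n$, where $\eta_n=O_{\Linftyminus}(a_n^{1/2})$ collects the small coefficients and $s_n$ is normalized, the error becomes $\eta_n\cdot O_{\Linftyminus}(a_n^{1/2})=o_{\Linftyminus}(a_n)$. This is exactly where $\Upsilon_{n,0}=o(a_n^{1/2})$ is needed: the product of two $o(a_n^{1/2})$ factors is $o(a_n)$.

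The main obstacle is making this bootstrap precise. The first-pass error has to be quantified as $O_{\Linftyminus}(a_n^{1/2})$ times the coefficient size, rather than merely $o(1)$. Concretely, I would state an intermediate lemma: for polynomials with $O_{\Linftyminus}(1)$ coefficients,
\[
  \int q_n(\bar u_n)\,\dd\mu_n-\E_N[q_n(N_n)]=O_{\Linftyminus}(a_n^{1/2}).
\]
This follows from the decomposition above, since $\frakL_n^{[1]}p_n$ has coefficients $O_{\Linftyminus}(a_n^{1/2})$. I would then apply the lemma to $a_n^{-1/2}\,\frakL_n^{[1]}$-type pieces, which gives $a_n^{1/2}\cdot o(a_n^{1/2})$ for the $\Upsilon$ part. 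For the $a_n$ parts, the same lemma directly gives $a_n\cdot O(a_n^{1/2})$. The vector-valued case then follows componentwise.
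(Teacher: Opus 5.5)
Your proposal is correct and is essentially the paper's argument. The paper derives the same one-step identity from stationarity of $\calP_nq_n$, Proposition~\ref{prop:local-OU-reduction} and the Poisson equation. It then applies that identity a second time to $\frakL_n^{[1]}\calP_nq_n$ and bounds the doubly corrected polynomial $\frakL_n^{[1]}\calP_n\{\frakL_n^{[1]}\calP_nq_n\}$ directly through its $o_{\Linftyminus(\theta_0)}(a_n)$ coefficients; this is your bootstrap without the intermediate normalization.

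One point to tighten: state your intermediate lemma with error $o_{\Linftyminus(\theta_0)}(a_n^{1/2})$, which your own argument gives because the coefficients of $\frakL_n^{[1]}\calP_nq_n$ are $o_{\Linftyminus(\theta_0)}(a_n^{1/2})$. With only $O(a_n^{1/2})$, rescaling the $\Upsilon_{n,0}$-piece by the deterministic $a_n^{-1/2}$ gives just $O(a_n)$. The alternative fix is to pull the random entries of $\Upsilon_{n,0}$ out of both $\mu_n$ and $\E_N$.
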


\begin{proof}
The coefficients of $\calP_nq_n$ are
$O_{\Linftyminus(\theta_0)}(1)$ by the observation above. First,
Proposition~\ref{prop:OU-scale-moment-bound} gives
\begin{equation}
  \int_\Theta q_n(\bar u_n)\,\dd\mu_n
  =
  \int_{O_n}q_n(\bar u_n)\,\dd\mu_n
  +o_{\Linftyminus(\theta_0)}(a_n).
  \label{eq:Poisson-localization}
\end{equation}
For $\theta\in O_n$, Proposition~\ref{prop:local-OU-reduction} and
\eqref{eq:OU-Poisson-equation} give
\[
\begin{aligned}
  q_n(\bar u_n(\theta))
  ={}&
  \E_N[q_n(N_n)]
  -
  a_n^2L_n\{(\calP_nq_n)\circ\bar u_n\}(\theta)
  \\
  &+
  (\frakL_n^{[1]}\calP_nq_n)(\bar u_n(\theta))
  +
  (\mathcal R_n\calP_nq_n)(\bar u_n(\theta)).
\end{aligned}
\]
Integrating this identity over $O_n$ and using
\eqref{eq:Poisson-localization} yields
\begin{align}
  \int_\Theta q_n(\bar u_n)\,\dd\mu_n
  &=
  \E_N[q_n(N_n)]\mu_n(O_n)
  -
  \int_{O_n}
    a_n^2L_n\{(\calP_nq_n)\circ\bar u_n\}(\theta)\,
    \mu_n(\dd\theta)
  \notag\\
  &\quad+
  \int_{O_n}
    (\frakL_n^{[1]}\calP_nq_n)(\bar u_n(\theta))\,
    \mu_n(\dd\theta)
  \notag\\
  &\quad+
  \int_{O_n}
    (\mathcal R_n\calP_nq_n)(\bar u_n(\theta))\,
    \mu_n(\dd\theta)
  +o_{\Linftyminus(\theta_0)}(a_n).
  \label{eq:Poisson-first-decomposition}
\end{align}

Proposition~\ref{prop:OU-scale-moment-bound} gives, for every $L>0$,
\begin{equation}
  \int_{O_n}
  a_n^2L_n\{(\calP_nq_n)\circ\bar u_n\}(\theta)\,
  \mu_n(\dd\theta)
  =
  O_{\Linftyminus(\theta_0)}(a_n^L).
  \label{eq:Poisson-generator-tail}
\end{equation}
By \eqref{eq:local-OU-remainder} and
Proposition~\ref{prop:OU-scale-moment-bound},
\begin{equation}
  \int_{O_n}
  \left|
    (\mathcal R_n\calP_nq_n)(\bar u_n(\theta))
  \right|
  \mu_n(\dd\theta)
  =
  o_{\Linftyminus(\theta_0)}(a_n).
  \label{eq:Poisson-remainder-integral}
\end{equation}
Since $\E_N[q_n(N_n)]=O_{\Linftyminus(\theta_0)}(1)$, Assumption~\textup{[A2]}$(\mu_n)$ gives
\begin{equation}
  \E_N[q_n(N_n)]\mu_n(O_n)
  =
  \E_N[q_n(N_n)]
  +o_{\Linftyminus(\theta_0)}(a_n).
  \label{eq:Poisson-mass-replacement}
\end{equation}
By \eqref{eq:first-OU-correction}, the bounds
\[
  \zeta_{n,0}=O_{\Linftyminus(\theta_0)}(1),
  \qquad
  \Upsilon_{n,0}=o_{\Linftyminus(\theta_0)}(a_n^{1/2})
\]
from \eqref{eq:zeta,Z-order}, and the coefficient bound for
$\calP_nq_n$, the coefficients of
$\frakL_n^{[1]}\calP_nq_n$ are
$o_{\Linftyminus(\theta_0)}(a_n^{1/2})$. Hence
Proposition~\ref{prop:OU-scale-moment-bound} gives
\begin{equation}
  \int_{O_n}
    (\frakL_n^{[1]}\calP_nq_n)(\bar u_n)\,\dd\mu_n
  =
  \int_\Theta
    (\frakL_n^{[1]}\calP_nq_n)(\bar u_n)\,\dd\mu_n
  +o_{\Linftyminus(\theta_0)}(a_n).
  \label{eq:Poisson-first-correction-localization}
\end{equation}
Combining \eqref{eq:Poisson-generator-tail},
\eqref{eq:Poisson-remainder-integral},
\eqref{eq:Poisson-mass-replacement}, and
\eqref{eq:Poisson-first-correction-localization} with
\eqref{eq:Poisson-first-decomposition} yields
\begin{equation}
  \int_\Theta q_n(\bar u_n)\,\dd\mu_n
  =
  \E_N[q_n(N_n)]
  +
  \int_\Theta
    (\frakL_n^{[1]}\calP_nq_n)(\bar u_n)\,\dd\mu_n
  +o_{\Linftyminus(\theta_0)}(a_n).
  \label{eq:Poisson-first-iteration}
\end{equation}

The preceding one-step identity applies again with $q_n$ replaced by
$\frakL_n^{[1]}\calP_nq_n$. Hence
\[
\begin{aligned}
  &\int_\Theta
    (\frakL_n^{[1]}\calP_nq_n)(\bar u_n)\,\dd\mu_n
  \\
  &\quad=
  \E_N\!\left[
    (\frakL_n^{[1]}\calP_nq_n)(N_n)
  \right]
  +
  \int_\Theta
    \left[
      \frakL_n^{[1]}
      \calP_n\{\frakL_n^{[1]}\calP_nq_n\}
    \right](\bar u_n)\,\dd\mu_n
  +o_{\Linftyminus(\theta_0)}(a_n).
\end{aligned}
\]
By \eqref{eq:first-OU-correction}, the bounds
\[
  \zeta_{n,0}=O_{\Linftyminus(\theta_0)}(1),
  \qquad
  \Upsilon_{n,0}=o_{\Linftyminus(\theta_0)}(a_n^{1/2})
\]
from \eqref{eq:zeta,Z-order}, and the coefficient-order preservation of
$\calP_n$, the polynomial
\[
  \frakL_n^{[1]}
  \calP_n\{\frakL_n^{[1]}\calP_nq_n\}
\]
has uniformly bounded degree and
$o_{\Linftyminus(\theta_0)}(a_n)$ coefficients. Applying
Proposition~\ref{prop:OU-scale-moment-bound} to this polynomial divided by
$a_n$ gives
\[
  \int_\Theta
  \left|
    \left[
      \frakL_n^{[1]}
      \calP_n\{\frakL_n^{[1]}\calP_nq_n\}
    \right](\bar u_n)
  \right|\,\dd\mu_n
  =
  o_{\Linftyminus(\theta_0)}(a_n).
\]
Substituting this into \eqref{eq:Poisson-first-iteration} proves
\eqref{eq:polynomial-Poisson-expansion}. For a polynomial with values in
a finite-dimensional vector space, choose a basis of the target
space and apply the scalar-valued result to each component.
\end{proof}

We next apply Proposition~\ref{prop:polynomial-Poisson-expansion} to the identity and quadratic polynomials
\[
T_{\theta_0} \Theta  \ni  u\longmapsto u \in T_{\theta_0} \Theta  ,
  \qquad
T_{\theta_0} \Theta  \ni    u\longmapsto u^{\otimes2}\in (T_{\theta_0} \Theta)^{\otimes 2}  .
\]
For the identity polynomial, \eqref{eq:centered-OU-Poisson-equation} gives
\[
  \calP u=2u,
\]
and hence, by the definition of $\calP_n$,
\begin{equation}
  \calP_nu=2(u-\zeta_{n,0}).
  \label{eq:OU-Poisson-identity}
\end{equation}
Together with \eqref{eq:first-OU-correction}, this gives
\[
\begin{aligned}
  (\frakL_n^{[1]}\calP_nu)(u)
  ={}&
  \Upsilon_{n,0}[u]
  +a_nV_0
  \\
  &+
  a_n(\nabla-D^{(-1)})_0
  \left[u,\zeta_{n,0}-\frac12u\right].
\end{aligned}
\]
Since
\[
  \E_N[N_n]=\zeta_{n,0},
  \qquad
  \E_N[N_n^{\otimes2}]
  =
  \zeta_{n,0}^{\otimes2}+\tau g_0^{-1},
\]
we obtain
\[
\begin{aligned}
  \E_N\!\left[
    (\frakL_n^{[1]}\calP_nu)(N_n)
  \right]
  ={}&
  \Upsilon_{n,0}[\zeta_{n,0}]
  +a_nV_0
+  \frac{a_n}{2}
    (\nabla-D^{(-1)})_0
    [\zeta_{n,0}^{\otimes2}-\tau g_0^{-1}].
\end{aligned}
\]
Proposition~\ref{prop:polynomial-Poisson-expansion} therefore yields
\begin{equation}
\begin{aligned}
  \int_\Theta \bar u_n\,\dd\mu_n
  ={}&
  \zeta_{n,0}
  +\Upsilon_{n,0}[\zeta_{n,0}]
  +a_nV_0
  \\
  &+
  \frac{a_n}{2}
    (\nabla-D^{(-1)})_0
    [\zeta_{n,0}^{\otimes2}-\tau g_0^{-1}]
  +o_{\Linftyminus(\theta_0)}(a_n).
\end{aligned}
\label{eq:OU-first-moment-expansion}
\end{equation}

For the quadratic polynomial, the coefficient-order preservation of
$\calP_n$ shows that $\calP_n(u^{\otimes2})$ has degree $2$ and $O_{\Linftyminus(\theta_0)}(1)$ coefficients. By
\eqref{eq:first-OU-correction} and the bounds \( \zeta_{n,0}=O_{\Linftyminus(\theta_0)}(1),
  \Upsilon_{n,0}=o_{\Linftyminus(\theta_0)}(a_n^{1/2})
\) from \eqref{eq:zeta,Z-order}, the coefficients of
$\frakL_n^{[1]}\calP_n(u^{\otimes2})$ are
$o_{\Linftyminus(\theta_0)}(a_n^{1/2})$. Since $N_n=\zeta_{n,0}+N$,
\begin{align*}
    \E_N[ N_n^{\otimes p} ] = O_{\Linftyminus(\theta_0)}(1)
\end{align*}
for all $ p \in \mathbb{N}$. Thus
\[
  \E_N\!\left[
    (\frakL_n^{[1]}\calP_n(u^{\otimes2}))(N_n)
  \right]
  =
  o_{\Linftyminus(\theta_0)}(1).
\]
Since
\[
  \E_N[N_n^{\otimes2}]
  =
  \zeta_{n,0}^{\otimes2}+\tau g_0^{-1},
\]
Proposition~\ref{prop:polynomial-Poisson-expansion} also gives
\begin{equation}
  \int_\Theta \bar u_n^{\otimes2}\,\dd\mu_n
  =
  \zeta_{n,0}^{\otimes2}
  +\tau g_0^{-1}
  +o_{\Linftyminus(\theta_0)}(1).
  \label{eq:OU-second-moment-expansion}
\end{equation}

\begin{proof}[Proof of \eqref{eq:main-expansion} in Theorem~\ref{thm:main-expansion}]
Fix $f\in C^\infty(\Theta;\R^{\mathrm q})$. Throughout the proof,
$C>0$ denotes a deterministic constant independent of $n$, whose value
may change from line to line.

Since $\bar u_n=a_n^{-1}\log_{\theta_0}$ on $O_n$, Taylor's formula in
the $\nabla$-normal coordinates at $\theta_0$ gives, for
$\theta\in O_n$,
\begin{equation}
\begin{aligned}
  f(\theta)
  ={}&
  f(\theta_0)
  +a_n\,(\dd f)_0[\bar u_n(\theta)]
  +\frac{a_n^2}{2}
    (\nabla\dd f)_0[\bar u_n(\theta)^{\otimes2}]
  +r_n(\theta),
\end{aligned}
\label{eq:f-normal-Taylor}
\end{equation}
where
\begin{equation}
  |r_n(\theta)|
  \le
  Ca_n^3|\bar u_n(\theta)|^3,
  \qquad
  \theta\in O_n.
  \label{eq:f-normal-Taylor-remainder}
\end{equation}
By \eqref{eq:globalized-OU-moment-bound},
\begin{equation}
  \int_{O_n}|r_n(\theta)|\,\mu_n(\dd\theta)
  =
  O_{\Linftyminus(\theta_0)}(a_n^3).
  \label{eq:f-Taylor-remainder-integral}
\end{equation}
Therefore,
\begin{align}
  \mu_n(f)
  &=
  \int_{O_n} f(\theta)\,\mu_n(\dd\theta)
  +o_{\Linftyminus(\theta_0)}(a_n^2)
  \notag\\[-1mm]
  &\hspace{28mm}
  \text{(by Assumption~\textup{[A2]}$(\mu_n)$ and }\|f\|_\infty<\infty\text{)}
  \notag\\
  &=
  \int_{O_n}
  \Bigg\{
    f(\theta_0)
    +a_n\,(\dd f)_0[\bar u_n(\theta)]
    +\frac{a_n^2}{2}
      (\nabla\dd f)_0[\bar u_n(\theta)^{\otimes2}]
  \Bigg\}\mu_n(\dd\theta)
  +o_{\Linftyminus(\theta_0)}(a_n^2)
  \notag\\[-1mm]
  &\hspace{28mm}
  \text{(by \eqref{eq:f-normal-Taylor} and
  \eqref{eq:f-Taylor-remainder-integral})}
  \notag\\
  &=
  \int_{\Theta}
  \Bigg\{
    f(\theta_0)
    +a_n\,(\dd f)_0[\bar u_n(\theta)]
    +\frac{a_n^2}{2}
      (\nabla\dd f)_0[\bar u_n(\theta)^{\otimes2}]
  \Bigg\}\mu_n(\dd\theta)
  +o_{\Linftyminus(\theta_0)}(a_n^2)
  \notag\\[-1mm]
  &\hspace{28mm}
  \text{(by \eqref{eq:polynomial-tail-bound})}
  \notag\\
  &=
  f(\theta_0)
  +a_n\,(\dd f)_0\!\left[
    \int_\Theta \bar u_n\,\dd\mu_n
  \right]
  +\frac{a_n^2}{2}
  (\nabla\dd f)_0\!\left[
    \int_\Theta \bar u_n^{\otimes2}\,\dd\mu_n
  \right]
  +o_{\Linftyminus(\theta_0)}(a_n^2).
  \label{eq:f-integral-moment-reduction}
\end{align}

Substituting \eqref{eq:OU-first-moment-expansion} and
\eqref{eq:OU-second-moment-expansion} into
\eqref{eq:f-integral-moment-reduction} yields
\begin{equation}
\begin{aligned}
  \mu_n(f)
  ={}&
  f(\theta_0)
  +a_n\,(\dd f)_0[
    \zeta_{n,0}+\Upsilon_{n,0}[\zeta_{n,0}]
  ]
  \\
  &+
  a_n^2\Bigg\{
    (\dd f)_0[V_0]
    +\frac12
      (\dd f)_0\!\left[
        (\nabla-D^{(-1)})_0
        [\zeta_{n,0}^{\otimes2}-\tau g_0^{-1}]
      \right]
  \\
  &\hspace{28mm}
    +\frac12
      (\nabla\dd f)_0
      [\zeta_{n,0}^{\otimes2}+\tau g_0^{-1}]
  \Bigg\}
  +o_{\Linftyminus(\theta_0)}(a_n^2).
\end{aligned}
\label{eq:f-expansion-before-connection-reduction}
\end{equation}

By the dual connection formula, for every
$S\in T_{\theta_0}\Theta^{\otimes2}$,
\begin{equation}
  (D^{(-1)}\dd f)_0[S]
  =
  (\nabla\dd f)_0[S]
  +
  (\dd f)_0\!\left[
    (\nabla-D^{(-1)})_0[S]
  \right].
  \label{eq:Dminus1-df-normal-relation}
\end{equation}
Using \eqref{eq:Dminus1-df-normal-relation}, the second-order terms in
\eqref{eq:f-expansion-before-connection-reduction} other than
$(\dd f)_0[V_0]$ become
\[
\begin{aligned}
  &\frac12
    (\dd f)_0\!\left[
      (\nabla-D^{(-1)})_0
      [\zeta_{n,0}^{\otimes2}-\tau g_0^{-1}]
    \right]
  +
  \frac12
    (\nabla\dd f)_0
    [\zeta_{n,0}^{\otimes2}+\tau g_0^{-1}]
  \\
  &\qquad=
  \tau(\Delta f)(\theta_0)
  +
  \frac12
    (D^{(-1)}\dd f)_0
    [\zeta_{n,0}^{\otimes2}-\tau g_0^{-1}]
  \\
  &\qquad=
  \left(
    \tau\Delta+\frac{1-\tau}{2}\Delta^{(-1)}
  \right)f(\theta_0)
  +
  \frac12
    (D^{(-1)}\dd f)_0
    [\zeta_{n,0}^{\otimes2}-g_0^{-1}].
\end{aligned}
\]
Substituting this identity into
\eqref{eq:f-expansion-before-connection-reduction} gives
\[
  \mu_n(f)
  =
  (\mathcal M_nf)(\theta_0)
  +o_{\Linftyminus(\theta_0)}(a_n^2),
\]
which is \eqref{eq:main-expansion}.
\end{proof}

\subsection{Expansion of the decision rule}
\label{subsec:proof-decision-expansion}

Choose a coordinate neighborhood $G$ of $m(\theta_0)$ such that its
coordinate image is a bounded convex open subset of the corresponding
Euclidean space. Throughout this subsection~\ref{subsec:proof-decision-expansion}, we identify $G$ with
this coordinate image through the chosen chart.

Fix a basis $(e_1,\ldots,e_{\mathrm p})$ of $T_{\theta_0}\Theta$ and use
the corresponding normal coordinates $(x^1,\ldots,x^{\mathrm p})$ on
$B_\Theta(\theta_0,r_0)$, defined by
\[
  \log_{\theta_0}\theta=x^i(\theta)e_i.
\]
We use indices $a,b,c,\ldots$ for the coordinates on $G$ and
$i,j,k,\ldots$ for the normal coordinates on $B_\Theta(\theta_0,r_0)$.
Denote the corresponding coordinate vector fields by
\[
  \partial_a:=\frac{\partial}{\partial z^a},
  \qquad
  \partial_i:=\frac{\partial}{\partial x^i},
\]
respectively.

Choose $r_1\in(0,a_{n_0}^{\kappa})$ sufficiently small and set
\[
  O:=B_\Theta(\theta_0,r_1).
\]
Decreasing $r_1$ and $G$ if necessary, we may assume that \(  \overline{O}\times\overline G
\) is contained in the neighborhood of $\operatorname{Graph}(m)$ on which $W$ is smooth, and \(m(\overline{O}) \subset G\). We may further assume that there exists $c>0$ such that
\begin{align}
  v^av^b\,\partial_a\partial_bW(\theta,z)
  &\ge
  c|v|^2,
  \qquad
  \theta\in\overline{O},\quad
  z\in\overline G,
  \label{eq:decision-local-Hessian-lower-bound}
  \\
  W(\theta,z)-W(\theta,m(\theta_0))
  &\ge
  c,
  \qquad
  \theta\in O,\quad
  z\in Z\setminus G.
  \label{eq:decision-local-gap}
\end{align}
Indeed, the first inequality follows from the positive definiteness of
the Hessian of $W(\theta_0,\cdot)$ at $m(\theta_0)$ and continuity.
For the second, since $m(\theta_0)$ is the unique minimizer of
$W(\theta_0,\cdot)$ and $Z\setminus G$ is compact,
\[
  \inf_{z\in Z\setminus G}
  \left\{
    W(\theta_0,z)-W(\theta_0,m(\theta_0))
  \right\}
  >0.
\]
Decreasing $c$ if necessary, we may therefore assume that
\begin{equation}
  W(\theta_0,z)-W(\theta_0,m(\theta_0))
  \ge
  2c,
  \qquad
  z\in Z\setminus G.
  \label{eq:decision-gap-at-theta0}
\end{equation}
Now set
\[
  F(\theta,z)
  :=
  W(\theta,z)-W(\theta,m(\theta_0)).
\]
Since $F$ is continuous and $Z\setminus G$ is compact,
\[
  \sup_{z\in Z\setminus G}
  |F(\theta,z)-F(\theta_0,z)|
  \longrightarrow
  0
  \qquad
  \text{as }\theta\to\theta_0.
\]
Hence, after decreasing $r_1$ if necessary,
\begin{equation}
  \sup_{z\in Z\setminus G}
  |F(\theta,z)-F(\theta_0,z)|
  \le
  c,
  \qquad
  \theta\in O.
  \label{eq:decision-gap-uniform-continuity}
\end{equation}
By \eqref{eq:decision-gap-at-theta0} and
\eqref{eq:decision-gap-uniform-continuity},
\[
  F(\theta,z)\ge c,
  \qquad
  \theta\in O,\quad z\in Z\setminus G,
\]
which is \eqref{eq:decision-local-gap}.

For a Borel set $A\subset\Theta$, define
\begin{equation}
  \widetilde\mu_n(A)
  :=
  \mu_n(A\cap O)
  +
  \mu_n(O^c)\delta_{\theta_0}(A).
  \label{eq:decision-localized-measure}
\end{equation}
Then $\widetilde\mu_n$ is a probability measure. Define
\begin{equation}
  \widetilde Q_n(z)
  :=
  \int_\Theta W(\theta,z)\,\widetilde\mu_n(\dd\theta).
  \label{eq:decision-localized-objective}
\end{equation}

\begin{lemma}
\label{lem:decision-localization}
For $O$ and $G$ chosen as above,
$\widetilde Q_n$ has, for all sufficiently large $n$, a unique minimizer
$\widetilde z_n\in G$. Moreover, for every $L>0$, every fixed
$\gamma\in C^\infty(Z;\R^{\mathrm q})$, every fixed
$f\in C(\Theta;\R^{\mathrm q})$, and every random polynomial $q_n$ on
$T_{\theta_0}\Theta$, with values in a fixed finite-dimensional vector
space, whose degree is uniformly bounded and whose coefficients are
$O_{\Linftyminus(\theta_0)}(1)$,
\begin{align}
  \mu_n\!\left(O^c\right)
  &=
  O_{\Linftyminus(\theta_0)}(a_n^L),
  \label{eq:decision-fixed-ball-tail}
  \\
  \gamma(\widehat z_n)-\gamma(\widetilde z_n)
  &=
  O_{\Linftyminus(\theta_0)}(a_n^L),
  \label{eq:decision-gamma-localization}
  \\
  \mu_n(f)-\widetilde\mu_n(f)
  &=
  O_{\Linftyminus(\theta_0)}(a_n^L),
  \label{eq:decision-integral-localization}
  \\
  \widetilde\mu_n\!\left(q_n(\bar u_n)\right)
  -\mu_n\!\left(q_n(\bar u_n)\right)
  &=
  O_{\Linftyminus(\theta_0)}(a_n^L).
  \label{eq:decision-polynomial-localization}
\end{align}
\end{lemma}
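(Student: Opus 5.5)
The plan is to reduce every claim to the single tail estimate \eqref{eq:decision-fixed-ball-tail}, combined with the local convexity \eqref{eq:decision-local-Hessian-lower-bound} and the gap \eqref{eq:decision-local-gap}.

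\emph{Tail bound and existence of $\widetilde z_n$.} Since $O=B_\Theta(\theta_0,r_1)$ with $r_1$ fixed and $a_n^\kappa\to0$, we have $O^c\subset O_n^c$ for large $n$. Because $0\le\mu_n(O^c)\le1$, Assumption~\textup{[A2]}$(\mu_n)$ gives
\[
\|\mu_n(O^c)\|_{L^p}\le\{\E_{n,\theta_0}[\mu_n(O_n^c)]\}^{1/p}=O(a_n^{L}),
\]
for every $p$, which is \eqref{eq:decision-fixed-ball-tail}. The measure $\widetilde\mu_n$ is supported in $O\cup\{\theta_0\}\subset O$. For $z\in\overline G$, the Hessian bound \eqref{eq:decision-local-Hessian-lower-bound} integrates to $\partial_a\partial_b\widetilde Q_n\ge c$, so $\widetilde Q_n$ is strictly convex on the convex set $G$. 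For $z\notin G$, \eqref{eq:decision-local-gap} gives $\widetilde Q_n(z)\ge\widetilde Q_n(m(\theta_0))+c$. Hence every minimizer lies in $G$, and strict convexity makes it unique. The minimizer is interior because the minimum over the compact set $\overline G$ is not attained on $\partial G\subset Z\setminus G$. Existence follows from compactness.

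\emph{Localization of $\widehat z_n$.} Set $\delta_n:=\sup_z|Q_n(z)-\widetilde Q_n(z)|\le 2\|W\|_\infty\mu_n(O^c)=O_{\Linftyminus}(a_n^L)$. First, $\widetilde Q_n(\widehat z_n)\le Q_n(\widehat z_n)+\delta_n\le Q_n(\widetilde z_n)+\delta_n\le \widetilde Q_n(\widetilde z_n)+2\delta_n$. On the event $\{2\delta_n<c\}$, whose complement has probability $O(a_n^L)$ by Markov's inequality, the gap forces $\widehat z_n\in G$. Strong convexity on $G$ together with $\mathrm D\widetilde Q_n(\widetilde z_n)=0$ then gives $\frac c2|\widehat z_n-\widetilde z_n|^2\le 2\delta_n$. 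Therefore $|\gamma(\widehat z_n)-\gamma(\widetilde z_n)|\le C\delta_n^{1/2}$ on the good event and $\le 2\|\gamma\|_\infty$ always. Since $\delta_n^{1/2}=O_{\Linftyminus}(a_n^{L/2})$ for every $L$, H\"older's inequality with the $O(a_n^L)$ bad-event probability yields \eqref{eq:decision-gamma-localization}. \textbf{This is the main obstacle:} getting arbitrary polynomial order costs a square root, which is harmless only because $L$ is arbitrary.

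\emph{Remaining bounds.} For \eqref{eq:decision-integral-localization}, note that $\mu_n(f)-\widetilde\mu_n(f)=\int_{O^c}f\,\dd\mu_n-\mu_n(O^c)f(\theta_0)$, which is bounded by $2\|f\|_\infty\mu_n(O^c)$. For \eqref{eq:decision-polynomial-localization}, the difference equals $\int_{O^c}q_n(\bar u_n)\,\dd\mu_n-\mu_n(O^c)q_n(0)$, since $\bar u_n(\theta_0)=0$. Because $O^c\subset O_n^c$ for large $n$, the first term is $O_{\Linftyminus}(a_n^L)$ by \eqref{eq:polynomial-tail-bound} in Proposition~\ref{prop:OU-scale-moment-bound}, applied componentwise. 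The second term is bounded by $|q_n(0)|\mu_n(O^c)$, which is $O_{\Linftyminus}(a_n^L)$ by H\"older's inequality and \eqref{eq:decision-fixed-ball-tail}.
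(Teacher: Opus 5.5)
Your proposal is correct and follows the paper's proof essentially step by step. The common steps are: the tail bound from [A2] via $\mu_n(O^c)^p\le\mu_n(O^c)$; the gap and strong convexity giving a unique $\widetilde z_n\in G$; the sup-norm perturbation $\sup_z|Q_n-\widetilde Q_n|\le 2\|W\|_\infty\mu_n(O^c)$, which forces $\widehat z_n\in G$ and yields $|\widehat z_n-\widetilde z_n|^2\lesssim\sup_z|Q_n-\widetilde Q_n|$; and direct $O^c$-tail estimates, using (polynomial-tail-bound), for the last two claims. The only cosmetic difference is that you rule out $\widehat z_n\notin G$ by comparing with $\widetilde Q_n(\widetilde z_n)$ rather than with $Q_n(m(\theta_0))$, which is equivalent.
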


\begin{proof}
For all sufficiently large $n$, $a_n^\kappa<r_1$, so
\[
  O^c
  \subset
  B_\Theta(\theta_0,a_n^\kappa)^c.
\]
For every $p<\infty$ and $L>0$, Assumption~\textup{[A2]}$(\mu_n)$, applied with
the power $pL$, gives
\[
\begin{aligned}
  \left\|
    \mu_n\!\left(O^c\right)
  \right\|_{L^p(P_{n,\theta_0})}^p
  &\le
  \E_{n,\theta_0}\!\left[
    \mu_n\!\left(B_\Theta(\theta_0,a_n^\kappa)^c\right)
  \right]
  \\
  &=O(a_n^{pL}),
\end{aligned}
\]
which proves \eqref{eq:decision-fixed-ball-tail}.

Integrating \eqref{eq:decision-local-gap} with respect to
$\widetilde\mu_n$ gives
\begin{equation}
  \widetilde Q_n(z)-\widetilde Q_n(m(\theta_0))
  \ge
  c,
  \qquad
  z\in Z\setminus G.
  \label{eq:decision-localized-minimum-gap}
\end{equation}
Similarly, \eqref{eq:decision-local-Hessian-lower-bound} gives
\begin{equation}
  v^av^b\,\partial_a\partial_b\widetilde Q_n(z)
  \ge
  c|v|^2,
  \qquad
  z\in G.
  \label{eq:decision-localized-Hessian-lower-bound}
\end{equation}
Since $Z$ is compact, $\widetilde Q_n$ has a minimizer. By
\eqref{eq:decision-localized-minimum-gap}, every minimizer belongs to
$G$, while \eqref{eq:decision-localized-Hessian-lower-bound} implies
that $\widetilde Q_n$ is strictly convex on $G$. Hence
$\widetilde Q_n$ has a unique minimizer $\widetilde z_n\in G$.

By the definition of $\widetilde\mu_n$ in
\eqref{eq:decision-localized-measure}, for every $z\in Z$,
\[
  Q_n(z)-\widetilde Q_n(z)
  =
  \int_{O^c}
  \left\{
    W(\theta,z)-W(\theta_0,z)
  \right\}\mu_n(\dd\theta).
\]
Hence, with
\[
  \varepsilon_n
  :=
  \sup_{z\in Z}|Q_n(z)-\widetilde Q_n(z)|,
\]
we have
\begin{equation}
  \varepsilon_n
  \le
  2\|W\|_\infty
  \mu_n\!\left(O^c\right)
  =
  O_{\Linftyminus(\theta_0)}(a_n^L)
  \label{eq:decision-objective-localization}
\end{equation}
for every $L>0$.

We claim that $\widehat z_n\in G$ on
$\{\varepsilon_n<c/2\}$. Indeed, if $\widehat z_n\notin G$, then
\eqref{eq:decision-localized-minimum-gap} and the definition of
$\varepsilon_n$ give
\[
\begin{aligned}
  Q_n(\widehat z_n)
  &\ge
  \widetilde Q_n(\widehat z_n)-\varepsilon_n
  \\
  &\ge
  \widetilde Q_n(m(\theta_0))+c-\varepsilon_n,
\end{aligned}
\]
whereas
\[
  Q_n(m(\theta_0))
  \le
  \widetilde Q_n(m(\theta_0))+\varepsilon_n.
\]
Hence
\[
  Q_n(\widehat z_n)-Q_n(m(\theta_0))
  \ge
  c-2\varepsilon_n
  >0
\]
on $\{\varepsilon_n<c/2\}$, contradicting the minimizing property of
$\widehat z_n$. Thus $\widehat z_n\in G$ on this event.

Since then both $\widehat z_n$ and
$\widetilde z_n$ belong to the convex set $G$, the line segment joining
them is contained in $G$. Moreover,
$\partial_a\widetilde Q_n(\widetilde z_n)=0$ because
$\widetilde z_n$ is the minimizer of $\widetilde Q_n$. Thus Taylor's formula and \eqref{eq:decision-localized-Hessian-lower-bound} give
\begin{equation}
\begin{aligned}
  &\widetilde Q_n(\widehat z_n)-\widetilde Q_n(\widetilde z_n)
  \\
  &\quad=
  \int_0^1(1-t)
  (\widehat z_n-\widetilde z_n)^a
  (\widehat z_n-\widetilde z_n)^b
  \partial_a\partial_b\widetilde Q_n
  \bigl(\widetilde z_n+t(\widehat z_n-\widetilde z_n)\bigr)\,\dd t
  \\
  &\quad\ge
  \frac c2|\widehat z_n-\widetilde z_n|^2.
\end{aligned}
\label{eq:decision-objective-lower-bound}
\end{equation}
On the other hand, since $\widehat z_n$ minimizes $Q_n$ and
$|Q_n-\widetilde Q_n|\le\varepsilon_n$ on $Z$,
\[
\begin{aligned}
  \widetilde Q_n(\widehat z_n)
  &\le
  Q_n(\widehat z_n)+\varepsilon_n
  \\
  &\le
  Q_n(\widetilde z_n)+\varepsilon_n
  \\
  &\le
  \widetilde Q_n(\widetilde z_n)+2\varepsilon_n.
\end{aligned}
\]
Hence
\begin{equation}
  \widetilde Q_n(\widehat z_n)-\widetilde Q_n(\widetilde z_n)
  \le
  2\varepsilon_n.
  \label{eq:decision-objective-upper-bound}
\end{equation}
By \eqref{eq:decision-objective-lower-bound} and
\eqref{eq:decision-objective-upper-bound},
\begin{equation}
  |\widehat z_n-\widetilde z_n|^2
  \le
  \frac4c\,\varepsilon_n
  \qquad
  \text{on }\{\varepsilon_n<c/2\}.
  \label{eq:decision-minimizer-localization}
\end{equation}
Let $C_\gamma$ be a Lipschitz constant of $\gamma$ on $\overline G$.
Since $\widetilde z_n\in G$ and $\widehat z_n\in G$ on
$\{\varepsilon_n<c/2\}$,
\begin{align}
      |\gamma(\widehat z_n)-\gamma(\widetilde z_n)|
  \le{}&
  C_\gamma|\widehat z_n-\widetilde z_n|
  \mathbf 1_{\{\varepsilon_n<c/2\}}
 +
  2\|\gamma\|_\infty
  \mathbf 1_{\{\varepsilon_n\ge c/2\}} \\  \le{}&  \frac{2C_\gamma}{c^{1/2}}\,\varepsilon_n^{1/2} +
  2\|\gamma\|_\infty
  \mathbf 1_{\{\varepsilon_n\ge c/2\}},
\end{align}
where the second inequality follows from
\eqref{eq:decision-minimizer-localization}.
Together with \eqref{eq:decision-objective-localization} and Markov's
inequality, this proves \eqref{eq:decision-gamma-localization}.

Finally, for fixed $f\in C(\Theta;\R^{\mathrm q})$,
\[
  \mu_n(f)-\widetilde\mu_n(f)
  =
  \int_{O^c}
  \{f(\theta)-f(\theta_0)\}\,\mu_n(\dd\theta),
\]
and hence
\[
  |\mu_n(f)-\widetilde\mu_n(f)|
  \le
  2\|f\|_\infty
  \mu_n\!\left(O^c\right).
\]
Equation~\eqref{eq:decision-integral-localization} follows from
\eqref{eq:decision-fixed-ball-tail}.

Finally, by \eqref{eq:decision-localized-measure},
\[
\begin{aligned}
  &\widetilde\mu_n\!\left(q_n(\bar u_n)\right)
  -\mu_n\!\left(q_n(\bar u_n)\right)
  \\
  &\quad=
  -\int_{O^c}q_n(\bar u_n)\,\dd\mu_n
  +q_n(0)\mu_n(O^c).
\end{aligned}
\]
For all sufficiently large $n$, $O_n\subset O$, and hence
$O^c\subset O_n^c$. Thus \eqref{eq:polynomial-tail-bound} and \eqref{eq:decision-fixed-ball-tail} give
\eqref{eq:decision-polynomial-localization}.
\end{proof}

For later use, set
\begin{equation}
  \widetilde u_n
  :=
  \widetilde\mu_n(\bar u_n),
  \qquad
  \widetilde\theta_n
  :=
  \exp_{\theta_0}(a_n\widetilde u_n).
  \label{eq:decision-local-center}
\end{equation}
Since $r_1<a_{n_0}^{\kappa}$, we have
$\bar x=\log_{\theta_0}$ on $O$. Hence
\[
  a_n\widetilde u_n
  =
  \widetilde\mu_n(\bar x)
  \in
  B_{T_{\theta_0}\Theta}(0,r_1),
\]
because the ball is convex. Thus
\[
  \widetilde\theta_n\in O,
  \qquad
  m(\widetilde\theta_n)\in G.
\]

\begin{proof}[Proof of \eqref{eq:loss-explicit-expansion} in Theorem~\ref{thm:main-expansion}]
Fix $\gamma\in C^\infty(Z;\R^{\mathrm q})$. Throughout the proof,
$C>0$ denotes a deterministic constant independent of $n$, whose value
may change from line to line. By
\eqref{eq:main-expansion}, applied to $f=\gamma\circ m$, it suffices to
show that
\begin{equation}
  \gamma(\widehat z_n)
  =
  \mu_n(\gamma\circ m)
  -\frac{\tau a_n^2}{2}(\Delta^W\gamma)(\theta_0)
  +o_{\Linftyminus(\theta_0)}(a_n^2).
  \label{eq:decision-integral-expansion}
\end{equation}
By Lemma~\ref{lem:decision-localization}, it is enough to prove
\begin{equation}
  \gamma(\widetilde z_n)
  =
  \widetilde\mu_n(\gamma\circ m)
  -\frac{\tau a_n^2}{2}(\Delta^W\gamma)(\theta_0)
  +o_{\Linftyminus(\theta_0)}(a_n^2).
  \label{eq:decision-localized-expansion}
\end{equation}

We first record the centered moments under $\widetilde\mu_n$.
Applying \eqref{eq:decision-polynomial-localization} to
$u\mapsto u$ and $u\mapsto u^{\otimes2}$, and then using
\eqref{eq:OU-first-moment-expansion} and
\eqref{eq:OU-second-moment-expansion}, gives
\[
\widetilde u_n =   \widetilde\mu_n(\bar u_n)
  =
  \zeta_{n,0}
  +\Upsilon_{n,0}[\zeta_{n,0}]
  +O_{\Linftyminus(\theta_0)}(a_n)
  +o_{\Linftyminus(\theta_0)}(a_n),
\]
and
\[
  \widetilde\mu_n(\bar u_n^{\otimes2})
  =
  \zeta_{n,0}^{\otimes2}
  +\tau g_0^{-1}
  +o_{\Linftyminus(\theta_0)}(1).
\]
Since
\[
  \zeta_{n,0}=O_{\Linftyminus(\theta_0)}(1),
  \qquad
  \Upsilon_{n,0}=o_{\Linftyminus(\theta_0)}(a_n^{1/2})
\]
by \eqref{eq:zeta,Z-order}, we obtain
\begin{align}
  \widetilde u_n
  =
  \widetilde\mu_n(\bar u_n)
  &=
  \zeta_{n,0}
  +o_{\Linftyminus(\theta_0)}(a_n^{1/2}),
  \label{eq:decision-centered-first-moment}
  \\
  \widetilde\mu_n\!\left[
    (\bar u_n-\widetilde u_n)^{\otimes2}
  \right]
  &=
  \tau g_0^{-1}
  +o_{\Linftyminus(\theta_0)}(1).
  \label{eq:decision-centered-second-moment}
\end{align}
By definition,
\begin{equation}
  \widetilde\mu_n(\bar u_n-\widetilde u_n)=0.
  \label{eq:decision-centered-first-moment-zero}
\end{equation}
Finally, \eqref{eq:globalized-OU-moment-bound} and
\eqref{eq:decision-polynomial-localization}, applied to
$(1+|u|^2)^2$, together with
\eqref{eq:decision-centered-first-moment}, give
\begin{equation}
  \widetilde\mu_n\!\left(
    |\bar u_n-\widetilde u_n|^3
  \right)
  =
  O_{\Linftyminus(\theta_0)}(1).
  \label{eq:decision-centered-third-moment}
\end{equation}
In particular,
\begin{align}
      x(\widetilde\theta_n)-x(\theta_0)  =  x(\widetilde \theta_n)
  =
  a_n\widetilde u_n
  =
  O_{\Linftyminus(\theta_0)}(a_n). \label{eq:difference-theta}
\end{align}

Since $\widetilde\mu_n$ is supported on $\overline O$ and $W$ is smooth
on a neighborhood of $\overline O\times\overline G$, all derivatives of
$W$ used below are uniformly bounded there. Hence differentiation under
the $\widetilde\mu_n$-integral is justified by dominated convergence; in
particular, for $z\in G$,
\[
  \partial_a\widetilde Q_n(z)
  =
  \int_\Theta \partial_aW(\theta,z)\,\widetilde\mu_n(\dd\theta),
  \qquad
  \partial_a\partial_b\widetilde Q_n(z)
  =
  \int_\Theta \partial_a\partial_bW(\theta,z)\,
  \widetilde\mu_n(\dd\theta).
\]
We use this interchange without further comment below.

By \eqref{eq:loss-stationarity}, the Taylor expansion of
$\theta\mapsto\partial_aW(\theta,m(\widetilde\theta_n))$ around
$\widetilde\theta_n$ gives, for $\theta\in O$,
\begin{align}
  \partial_aW(\theta,m(\widetilde\theta_n))
  ={}&
  a_n
  W(\partial_i\mid\partial_a)(\widetilde\theta_n)
  (\bar u_n^i-\widetilde u_n^i)
  \notag\\
  &+
  \frac{a_n^2}{2}
  W(\partial_i\partial_j\mid\partial_a)(\widetilde\theta_n)
  (\bar u_n^i-\widetilde u_n^i)
  (\bar u_n^j-\widetilde u_n^j)
  +r_{n,a}(\theta),
  \label{eq:decision-score-pointwise-Taylor}
\end{align}
where
\[
  |r_{n,a}(\theta)|
  \le
  Ca_n^3|\bar u_n-\widetilde u_n|^3.
\]
By
\eqref{eq:loss-derivative-definition},
\begin{equation}
\begin{aligned}
  W(\partial_i\partial_j\mid\partial_a)(\theta_0)
  &=
  -\braket{D^W_{\partial_i}\partial_j,\partial_a}_{g_W}(\theta_0)
  \\
  &=
  -(g_W)_{ab}(\theta_0)
  \bigl(D^W_{\partial_i}\partial_j\bigr)^b(\theta_0).
\end{aligned}
\label{eq:decision-loss-jet-coordinate}
\end{equation}
Here $(g_W)_{ab}(\theta_0)$ are the coordinate components of the Hessian
of $W(\theta_0,\cdot)$ at $m(\theta_0)$. 

Integrating \eqref{eq:decision-score-pointwise-Taylor} and using
\eqref{eq:decision-centered-second-moment}--%
\eqref{eq:decision-centered-third-moment} gives
\begin{align}
  \partial_a\widetilde Q_n(m(\widetilde\theta_n))
  ={}&
  \frac{\tau a_n^2}{2}
  g_0^{ij}
  W(\partial_i\partial_j\mid\partial_a)(\widetilde\theta_n)
  +o_{\Linftyminus(\theta_0)}(a_n^2)
  \notag\\
  ={}&
  \frac{\tau a_n^2}{2}
  g_0^{ij}
  W(\partial_i\partial_j\mid\partial_a)(\theta_0)
  +o_{\Linftyminus(\theta_0)}(a_n^2)
  \notag\\
  ={}&
  -\frac{\tau a_n^2}{2}
  g_0^{ij}(g_W)_{ab}(\theta_0)
  \bigl(D^W_{\partial_i}\partial_j\bigr)^b(\theta_0)
  +o_{\Linftyminus(\theta_0)}(a_n^2).
  \label{eq:decision-score-expansion}
\end{align}
Here the second equality follows from \eqref{eq:difference-theta} and the
smoothness of $W(\partial_i\partial_j\mid\partial_a)$, while the last
equality is \eqref{eq:decision-loss-jet-coordinate}. Also, $g_0^{ij}$
denotes the $(i,j)$-component of $g_0^{-1}$.
The same argument, applied to
$\partial_a\partial_bW(\theta,m(\widetilde\theta_n))$, gives
\begin{equation}
  \partial_a\partial_b
  \widetilde Q_n(m(\widetilde\theta_n))
  =
  (g_W)_{ab}(\theta_0)
  +o_{\Linftyminus(\theta_0)}(1).
  \label{eq:decision-Hessian-expansion}
\end{equation}

Using $\partial_a\widetilde Q_n(\widetilde z_n)=0$, Taylor's formula and \eqref{eq:decision-localized-Hessian-lower-bound} give
\[
\begin{aligned}
  &-\partial_a\widetilde Q_n(m(\widetilde\theta_n))
  \{\widetilde z_n-m(\widetilde\theta_n)\}^a
  \\
  &\quad=
  \int_0^1
  \{\widetilde z_n-m(\widetilde\theta_n)\}^a
  \{\widetilde z_n-m(\widetilde\theta_n)\}^b
  \partial_a\partial_b\widetilde Q_n
  \bigl(
    m(\widetilde\theta_n)
    +t\{\widetilde z_n-m(\widetilde\theta_n)\}
  \bigr)\,\dd t
  \\
  &\quad\ge
  c|\widetilde z_n-m(\widetilde\theta_n)|^2.
\end{aligned}
\]
Hence,
\begin{equation}
  \left|
    \partial_a\widetilde Q_n(m(\widetilde\theta_n))
    \{\widetilde z_n-m(\widetilde\theta_n)\}^a
  \right|
  \ge
  c|\widetilde z_n-m(\widetilde\theta_n)|^2.
  \label{eq:decision-score-displacement-bound}
\end{equation}
Together with \eqref{eq:decision-score-expansion}, this yields
\begin{equation}
  \widetilde z_n-m(\widetilde\theta_n)
  =
  O_{\Linftyminus(\theta_0)}(a_n^2).
  \label{eq:decision-local-minimizer-order}
\end{equation}

Using \eqref{eq:decision-local-minimizer-order}, the first-order condition
can now be expanded as
\begin{align}
  0
  &=
  \partial_a\widetilde Q_n(\widetilde z_n)
  \notag\\[-1mm]
  &\hspace{28mm}
  \text{(by the definition of $\widetilde z_n$)}
  \notag\\
  &=
  \partial_a\widetilde Q_n(m(\widetilde\theta_n))
  +
  \partial_a\partial_b
  \widetilde Q_n(m(\widetilde\theta_n))
  \{\widetilde z_n-m(\widetilde\theta_n)\}^b
  +
  o_{\Linftyminus(\theta_0)}(a_n^2)
  \notag\\[-1mm]
  &\hspace{28mm}
  \text{(by Taylor's formula)}
  \notag\\
  &=
  -\frac{\tau a_n^2}{2}
  g_0^{ij}(g_W)_{ab}(\theta_0)
  \bigl(D^W_{\partial_i}\partial_j\bigr)^b(\theta_0)
  +
  (g_W)_{ab}(\theta_0)
  \{\widetilde z_n-m(\widetilde\theta_n)\}^b
  +
  o_{\Linftyminus(\theta_0)}(a_n^2)
  \label{eq:decision-first-order-linearization}\\[-1mm]
  &\hspace{28mm}
  \text{(by \eqref{eq:decision-score-expansion} and
  \eqref{eq:decision-Hessian-expansion}).}
  \notag
\end{align}
Since $(g_W)_{ab}(\theta_0)$ is invertible,
\eqref{eq:decision-first-order-linearization} is equivalent to
\begin{equation}
  \{\widetilde z_n-m(\widetilde\theta_n)\}^a
  =
  \frac{\tau a_n^2}{2}
  g_0^{ij}
  \bigl(D^W_{\partial_i}\partial_j\bigr)^a(\theta_0)
  +
  o_{\Linftyminus(\theta_0)}(a_n^2).
  \label{eq:decision-local-minimizer-expansion}
\end{equation}
Consequently, by Taylor's formula and
\eqref{eq:decision-local-minimizer-expansion},
\begin{align}
  \gamma(\widetilde z_n)
  -\gamma(m(\widetilde\theta_n))
  &=
  (\dd\gamma\circ m)(\theta_0)
  \bigl[
    \widetilde z_n-m(\widetilde\theta_n)
  \bigr]
  +o_{\Linftyminus(\theta_0)}(a_n^2)
  \notag\\
  &=
  \frac{\tau a_n^2}{2}
  g_0^{ij}
  (\dd\gamma\circ m)
  [D^W_{\partial_i}\partial_j](\theta_0)
  +
  o_{\Linftyminus(\theta_0)}(a_n^2).
  \label{eq:decision-gamma-displacement}
\end{align}
Here
\[
  (\dd\gamma\circ m)(\theta_0)
  =
  \dd\gamma_{m(\theta_0)}
  \in
  T_{m(\theta_0)}^*Z\otimes\R^{\mathrm q},
\]
which should not be confused with
$\dd(\gamma\circ m)_{\theta_0}$.

Set $f:=\gamma\circ m$. Taylor expansion of $f$ around
$\widetilde\theta_n$ gives, for $\theta\in O$,
\[
\begin{aligned}
  f(\theta)-f(\widetilde\theta_n)
  ={}&
  a_n\partial_i f(\widetilde\theta_n)
  (\bar u_n^i-\widetilde u_n^i)
  \\
  &+
  \frac{a_n^2}{2}
  \partial_i\partial_jf(\widetilde\theta_n)
  (\bar u_n^i-\widetilde u_n^i)
  (\bar u_n^j-\widetilde u_n^j)
  +r_n^f(\theta),
\end{aligned}
\]
where
\[
  |r_n^f(\theta)|
  \le
  Ca_n^3|\bar u_n-\widetilde u_n|^3.
\]
Integrating this identity and using
\eqref{eq:decision-centered-second-moment}--%
\eqref{eq:decision-centered-third-moment}, we obtain
\begin{align}
  \widetilde\mu_n(f)-f(\widetilde\theta_n)
  ={}&
  \frac{\tau a_n^2}{2}
  g_0^{ij}
  \partial_i\partial_jf(\widetilde\theta_n)
  +o_{\Linftyminus(\theta_0)}(a_n^2)
  \notag\\
  ={}&
  \frac{\tau a_n^2}{2}
  g_0^{ij}
  \partial_i\partial_jf(\theta_0)
  +o_{\Linftyminus(\theta_0)}(a_n^2).
  \label{eq:decision-target-integral-expansion}
\end{align}
Here the second equality follows from
\eqref{eq:difference-theta} and the smoothness of $f$.

Now
\[
\begin{aligned}
  \gamma(\widetilde z_n)-\widetilde\mu_n(\gamma\circ m)
  ={}&
  \bigl\{
    \gamma(\widetilde z_n)-\gamma(m(\widetilde\theta_n))
  \bigr\}
  \\
  &-
  \bigl\{
    \widetilde\mu_n(\gamma\circ m)
    -\gamma(m(\widetilde\theta_n))
  \bigr\}.
\end{aligned}
\]
Therefore, by \eqref{eq:decision-gamma-displacement} and
\eqref{eq:decision-target-integral-expansion},
\[
\begin{aligned}
  \gamma(\widetilde z_n)-\widetilde\mu_n(\gamma\circ m)
  =
  -\frac{\tau a_n^2}{2}g_0^{ij}
  \Bigl\{
    \partial_i\partial_j(\gamma\circ m)
    -
    (\dd\gamma\circ m)
    [D^W_{\partial_i}\partial_j]
  \Bigr\}(\theta_0)
  +
  o_{\Linftyminus(\theta_0)}(a_n^2).
\end{aligned}
\]
By the definition of the dual O-derivative and the fact that the
principal homomorphism of $D^W$ is $\dd m$,
\[
\begin{aligned}
  \bigl(D^W_{\partial_i}(\dd\gamma\circ m)\bigr)[\partial_j]
  &=
  \partial_i\!\left\{
    (\dd\gamma\circ m)[\dd m[\partial_j]]
  \right\}
  -
  (\dd\gamma\circ m)
  [D^W_{\partial_i}\partial_j]
  \\
  &=
  \partial_i\partial_j(\gamma\circ m)
  -
  (\dd\gamma\circ m)
  [D^W_{\partial_i}\partial_j].
\end{aligned}
\]
Equivalently,
\[
  \bigl(D^W(\dd\gamma\circ m)\bigr)
  [\partial_i,\partial_j]
  =
  \partial_i\partial_j(\gamma\circ m)
  -
  (\dd\gamma\circ m)
  [D^W_{\partial_i}\partial_j].
\]
Hence \eqref{eq:loss-Laplacian-definition} gives
\eqref{eq:decision-localized-expansion}, and therefore
\eqref{eq:loss-explicit-expansion} follows.
\end{proof}

\section{Proofs of the remaining results}
\label{sec:proof-remaining-results}

\begin{proof}[Proof of Proposition \ref{prop:likelihood-bartlett-compatibility}]
By \eqref{eq:expected-one-form} and
\eqref{eq:likelihood-diagonal-centering},
\[
  \E_{n,\theta_0}[\Psi_n(\theta_0)]=0
\]
as a covector. Hence \eqref{eq:beta-definition} holds with $\beta=0$.

As in Definition~\ref{def:precontrast},
\eqref{eq:likelihood-diagonal-centering} gives, for local vector fields
$X_1,X_2$,
\[
  \overline\Psi_n(X_2X_1\mid)(\theta_0)
  +\overline\Psi_n(X_1\mid X_2)(\theta_0)
  =0.
\]
Together with \eqref{eq:likelihood-score-covariance}, this yields
\begin{equation}
  a_n^{-2}\Cov_{n,\theta_0}\!\left\{
    \Psi_n[X_1](\theta_0),\Psi_n[X_2](\theta_0)
  \right\}
  =\overline\Psi_n(X_2X_1\mid)(\theta_0).
  \label{eq:likelihood-first-bartlett-identity}
\end{equation}

We next identify the right-hand side. Let
$x=(x^1,\ldots,x^{\mathrm p})$ be the coordinate system in
Assumption~\textup{[A1]}, and write $\partial_i:=\partial/\partial x^i$.
By \eqref{eq:second-jet-local-rate}, there exist $r>0$ and $n_0$ such that,
for every $i,j$,
\begin{equation}
  \sup_{n\ge n_0}
  \left\|
    \sup_{d(\theta,\theta_0)\le r}
    \left|
      \partial_i^2\{\Psi_n[\partial_j]\}(\theta)
    \right|
  \right\|_{L^1(P_{n,\theta_0})}
  <\infty.
  \label{eq:likelihood-local-second-derivative-bound}
\end{equation}
For $h$ sufficiently small, let $\theta_h$ be defined by
$x(\theta_h)=x(\theta_0)+he_i$. Taylor's theorem gives
\begin{equation}
  \Psi_n[\partial_j](\theta_h)
  =
  \Psi_n[\partial_j](\theta_0)
  +h\,\partial_i\{\Psi_n[\partial_j]\}(\theta_0)
  +h^2R_{n,ij}(h),
  \label{eq:likelihood-coordinate-Taylor}
\end{equation}
where, for all sufficiently small $h$,
\begin{equation}
  |R_{n,ij}(h)|
  \le
  \frac12
  \sup_{d(\theta,\theta_0)\le r}
  \left|
    \partial_i^2\{\Psi_n[\partial_j]\}(\theta)
  \right|.
  \label{eq:likelihood-coordinate-Taylor-remainder}
\end{equation}
By \eqref{eq:likelihood-local-second-derivative-bound} and
\eqref{eq:likelihood-coordinate-Taylor-remainder},
$hR_{n,ij}(h)\to0$ in $L^1(P_{n,\theta_0})$ as $h\to0$ for
$n\ge n_0$. Hence, by \eqref{eq:likelihood-coordinate-Taylor}, for $n\ge n_0$,
\[
\begin{aligned}
  \overline\Psi_n(\partial_i\partial_j\mid)(\theta_0)
  &=
  \lim_{h\to0}
  \E_{n,\theta_0}\!\left[
    \frac{
      \Psi_n[\partial_j](\theta_h)
      -\Psi_n[\partial_j](\theta_0)
    }{h}
  \right]
  \\
  &=
  \lim_{h\to0}
  \E_{n,\theta_0}\!\left[
    \partial_i\{\Psi_n[\partial_j]\}(\theta_0)
    +hR_{n,ij}(h)
  \right]
  \\
  &=
  \E_{n,\theta_0}\!\left[
    \partial_i\{\Psi_n[\partial_j]\}(\theta_0)
  \right].
\end{aligned}
\]
By \eqref{eq:first-jet-rate},
\[
  \overline\Psi_n(\partial_i\partial_j\mid)(\theta_0)
  \longrightarrow
  \braket{ \partial_i,\partial_j}(\theta_0).
\]
Therefore \eqref{eq:likelihood-first-bartlett-identity} gives, for every
$i,j$,
\[
  a_n^{-2}
  \Cov_{n,\theta_0}\!\left\{
    \Psi_n[\partial_i](\theta_0),\Psi_n[\partial_j](\theta_0)
  \right\}
  \longrightarrow
  \braket{ \partial_i,\partial_j}(\theta_0).
\]
Since both sides are tensorial in the values at $\theta_0$, it follows that,
for arbitrary local vector fields $X_1,X_2$,
\[
  a_n^{-2}
  \Cov_{n,\theta_0}\!\left\{
    \Psi_n[X_1](\theta_0),\Psi_n[X_2](\theta_0)
  \right\}
  \longrightarrow
  \braket{ X_1,X_2}(\theta_0).
\]
Thus \eqref{eq:covariance-normalization} holds.

Finally, for arbitrary local vector fields $X_1,X_2,X_3$,
\eqref{eq:likelihood-mixed-covariance} and
\eqref{eq:likelihood-mixed-jet} give
\[
\begin{aligned}
  a_n^{-2}
  \Cov_{n,\theta_0}\!\left\{
    X_1(\Psi_n[X_2])(\theta_0),
    \Psi_n[X_3](\theta_0)
  \right\}
  &\longrightarrow
  -\Psi(X_1X_2\mid X_3)(\theta_0)
  \\
  &=
  \braket{D_{X_1}^{(1)}X_2,X_3}(\theta_0),
\end{aligned}
\]
where the last equality is \eqref{eq:D1-definition}. Hence
\eqref{eq:DB-definition} holds with $D^{\mathrm B}=D^{(1)}$.
Together with $\beta=0$, this proves Assumption~\textup{[A3]} and, by
\eqref{eq:b-definition}, $b=0$.
\end{proof}

\begin{proof}[Proof of Proposition~\ref{prop:bias-expansion}]
Throughout this proof, a subscript $0$ denotes evaluation at $\theta_0$.
Choose local coordinates whose coordinate basis at $\theta_0$ is
$g_0$-orthonormal, and write $\Psi_{n,i}:=\Psi_n[\partial_i]$.
The definitions of $\zeta_n$ and $\Upsilon_n$ give
\[
  \zeta_{n,0}^i=-a_n^{-1}\Psi_{n,i,0},
  \qquad
  (\Upsilon_{n,0})^i{}_j
  =
  \delta^i_j-(\partial_j\Psi_{n,i})_0
  +\Gamma^{(1),k}_{ji,0}\Psi_{n,k,0}.
\]
By \eqref{eq:beta-definition} and \eqref{eq:first-jet-rate},
\[
  \E_{n,\theta_0}[\Psi_{n,i,0}]
  =
  a_n^2\beta_{i,0}+o(a_n^2),
  \qquad
  \E_{n,\theta_0}[(\partial_j\Psi_{n,i})_0]
  =
  \delta_{ji}+o(1).
\]
Moreover, \eqref{eq:DB-definition}, \eqref{eq:covariance-normalization},
and \eqref{eq:beta-definition} give
\begin{align*}
  a_n^{-2}\Cov_{n,\theta_0}
  \bigl((\partial_j\Psi_{n,i})_0,\Psi_{n,j,0}\bigr)
  &\longrightarrow
  \Gamma^{\mathrm B,j}_{ji,0},
  \\
  a_n^{-2}\E_{n,\theta_0}
  [\Psi_{n,k,0}\Psi_{n,j,0}]
  &\longrightarrow
  \delta_{kj},
\end{align*}
where the first relation is read for each fixed $j$ before summation.
Therefore
\begin{align}
  a_n^{-1}\E_{n,\theta_0}
  [\Upsilon_{n,0}[\zeta_{n,0}]^i]
  &=
  -a_n^{-2}\E_{n,\theta_0}[\Psi_{n,i,0}]
  +a_n^{-2}\sum_j
  \E_{n,\theta_0}[(\partial_j\Psi_{n,i})_0]
  \E_{n,\theta_0}[\Psi_{n,j,0}]
  \\
  &\quad
  + a_n^{-2}\sum_j
  \Cov_{n,\theta_0}
  \bigl((\partial_j\Psi_{n,i})_0,\Psi_{n,j,0}\bigr)
  -a_n^{-2}\sum_{j,k}\Gamma^{(1),k}_{ji,0}
  \E_{n,\theta_0}[\Psi_{n,k,0}\Psi_{n,j,0}]
  \\
  &\longrightarrow
  -\beta_{i,0}+\beta_{i,0}
  +\sum_j
  \bigl(
    \Gamma^{\mathrm B,j}_{ji,0}
    -
    \Gamma^{(1),j}_{ji,0}
  \bigr)
  =
  \sum_j
  \bigl(
    \Gamma^{\mathrm B,j}_{ji,0}
    -
    \Gamma^{(1),j}_{ji,0}
  \bigr).
  \label{eq:bias-proof-Upsilon-mean}
\end{align}
Moreover,
\begin{equation}
  a_n^{-1}\E_{n,\theta_0}[\zeta_{n,0}^i]
  \longrightarrow
  -\beta_{i,0}.
  \label{eq:bias-proof-zeta-mean}
\end{equation}
Equations~\eqref{eq:bias-proof-Upsilon-mean} and
\eqref{eq:bias-proof-zeta-mean}, together with
\eqref{eq:b-definition}, give
\eqref{eq:scaled-jet-first-moment}.

Likewise, \eqref{eq:covariance-normalization} and
\eqref{eq:beta-definition} give
\[
  \E_{n,\theta_0}
  [\zeta_{n,0}^i\zeta_{n,0}^j]
  \longrightarrow
  \delta_{ij}.
\]
Since the coordinates are $g_0$-orthonormal, $g_0^{ij}=\delta^{ij}$,
and hence \eqref{eq:scaled-jet-second-moment} follows.
\end{proof}

\begin{proof}[Proof of Corollary~\ref{cor:loss-temperature-drift-comparison}]
Subtract the two expansions in Theorem~\ref{thm:main-expansion} and use
\eqref{eq:estimator-comparison-operator}. For the decision rules, the
common target map $m$ makes the difference of the expansion terms equal to
$(\mathcal M_n^{(1)}-\mathcal M_n^{(2)})(\gamma\circ m)$, while the two
loss-Laplacian terms remain. This proves both stochastic comparisons.
\end{proof}

\begin{proof}[Proof of Corollary~\ref{cor:coordinate-expansion}]
Under Setting~\textup{[E]}, the standard coordinate map satisfies
\[
  \Delta\theta=-\Gamma^\nabla[g^{-1}],
  \qquad
  (D^{(-1)}\dd\theta)[S]=-\Gamma^{(-1)}[S].
\]
Applying Theorem~\ref{thm:main-expansion} to this map gives
\eqref{eq:euclidean-parameter-expansion}, and
Corollary~\ref{cor:loss-temperature-drift-comparison} gives
\eqref{eq:estimator-comparison-coordinate}. Finally,
\eqref{eq:general-bias-formula} yields
\[
  \calB\theta
  =
  V-b-\left\{
    \tau\Gamma^\nabla+\frac{1-\tau}{2}\Gamma^{(-1)}
  \right\}[g^{-1}],
\]
which proves \eqref{eq:coordinate-bias-expansion} and
\eqref{eq:coordinate-second-order-unbiased}.
\end{proof}

\begin{proof}[Proof of Corollary~\ref{cor:bias-comparison-correction}]
For any connection $D$ on $T\Theta$, the dual derivative rule gives
\[
  \Delta^D(\gamma\circ m)-\Delta^W\gamma
  =(\dd\gamma\circ m)
       [(D^W-\dd m\circ D)[g^{-1}]].
\]
Apply this with $D=\nabla$ and $D=D^{(-1)}$ in
\eqref{eq:general-bias-formula}. By \eqref{eq:loss-calibration-section},
\begin{equation}
  \calB(\gamma\circ m)-\frac{1+\tau}{2}\Delta^W\gamma
  =(\dd\gamma\circ m)[\dd m[V]-V^*].
  \label{eq:loss-calibration-identity}
\end{equation}
Subtract the two formulas in Theorem~\ref{thm:main-expansion}, with
$f=\gamma\circ m$. Since $\tau>0$ is fixed,
\[
  \widehat{\mathbb B}_n(\gamma)
  =\frac12\Delta^W\gamma(\theta_0)
    +o_{\Linftyminus(\theta_0)}(1).
\]
Proposition~\ref{prop:bias-expansion} gives
\[
  \mathbb B_{n,\theta_0}(\gamma)
  =\left\{\calB(\gamma\circ m)-\frac\tau2\Delta^W\gamma\right\}(\theta_0)
    +o(1).
\]
Subtracting and using \eqref{eq:loss-calibration-identity} proves
\eqref{eq:normalized-bias-comparison}. Finally, by the definitions,
\[
  \E_{n,\theta_0}[\widetilde\gamma_n]-\gamma(m(\theta_0))
  =a_n^2\E_{n,\theta_0}
      [(\mathbb B_{n,\theta_0}-\widehat{\mathbb B}_n)(\gamma)].
\]
The remainder in \eqref{eq:normalized-bias-comparison} is $o_{L^1}(1)$.
Thus $\dd m[V]=V^*$ yields \eqref{eq:loss-corrected-bias-expansion}.
\end{proof}

\begin{proof}[Proof of Corollary~\ref{thm:canonical-loss-calibrations}]
For the squared-distance loss in \eqref{eq:distance-loss-geometry},
$m=\operatorname{id}_\Theta$ and $D^W=\nabla^h$. For any fixed $\tau>0$,
\eqref{eq:loss-calibration-section} therefore gives
\begin{equation}
  V^*=b+\left\{
       \tau(\nabla-\nabla^h)
       +\frac{1-\tau}{2}(D^{(-1)}-\nabla^h)
     \right\}[g^{-1}].
  \label{eq:distance-calibrating-drift-temperature}
\end{equation}
In particular, if $h=g$, then
\begin{equation}
  V^*=b+\frac{\tau-1}{2}(\nabla-D^{(-1)})[g^{-1}].
  \label{eq:metric-calibrating-drift-temperature}
\end{equation}
At $\tau=1$, this is $V^*=b$. Since
$m=\operatorname{id}_\Theta$, the choice $V=b$ in \textup{(i)} satisfies
\eqref{eq:loss-calibrating-drift}.

In \textup{(ii)}, $m=\operatorname{id}_\Theta$, $D^W$ is the flat
coordinate connection, and $\tau=1$. Hence, in the standard coordinates,
\begin{equation}
  V^*=b+\Gamma^\nabla[g^{-1}].
  \label{eq:unit-correcting-vector-field}
\end{equation}
Thus $V=b+\Gamma^\nabla[g^{-1}]$ satisfies
\eqref{eq:loss-calibrating-drift}.
\end{proof}

\appendix

\section{Godambe Normalization of Estimation Functions}
\label{app:godambe-normalization}

We record the standard Godambe normalization used in
Sections~\ref{sec:setup} and~\ref{subsec:local-assumptions}; see
Godambe~\cite{Godambe1960}.

\begin{proposition}[Godambe normalization]
\label{prop:godambe-normalization}
Let $\Psi_{n,0}$ be a random smooth $\R^{\mathrm p}$-valued estimation
function on $\Theta$. Suppose that there exist smooth families
\[
  H(\theta)\in\operatorname{Hom}(T_\theta\Theta,\R^{\mathrm p}),
  \qquad
  J(\theta)\in\operatorname{Sym}^2(\R^{\mathrm p}),
\]
such that $H(\theta)$ is an isomorphism and $J(\theta)$ is positive definite
for every $\theta\in\Theta$. Assume, for each fixed $\theta\in\Theta$,
\begin{equation}
  \Psi_{n,0}(\theta)=O_{P_{n,\theta}}(a_n),
  \qquad
  (\dd\Psi_{n,0})_\theta
  \longrightarrow H(\theta)
  \label{eq:godambe-sensitivity-assumptions}
\end{equation}
in $P_{n,\theta}$-probability, and
\begin{equation}
  a_n^{-2}\Cov_{n,\theta}
  \!\left(\Psi_{n,0}(\theta)\right)
  \longrightarrow J(\theta).
  \label{eq:godambe-variability-assumption}
\end{equation}
Regard $J(\theta)$ as the isomorphism
$J(\theta):(\R^{\mathrm p})^*\to\R^{\mathrm p}$ and define
\begin{equation}
  \Psi_n(\theta)
  :=
  H(\theta)^*J(\theta)^{-1}\Psi_{n,0}(\theta)
  \in T_\theta^*\Theta.
  \label{eq:godambe-normalized-one-form}
\end{equation}
Then, for any connection $D$ on $T\Theta$ and its dual connection on
$T^*\Theta$,
\begin{equation}
  (D\Psi_n)_\theta
  \longrightarrow
  H(\theta)^*J(\theta)^{-1}H(\theta)
  \label{eq:godambe-normalized-sensitivity}
\end{equation}
in $P_{n,\theta}$-probability, and
\begin{equation}
  a_n^{-2}\Cov_{n,\theta}\!\left(\Psi_n(\theta)\right)
  \longrightarrow
  H(\theta)^*J(\theta)^{-1}H(\theta).
  \label{eq:godambe-normalized-variability}
\end{equation}
The common limit is
\[
  H(\theta)^*J(\theta)^{-1}H(\theta)
  \in\operatorname{Sym}^2(T_\theta^*\Theta).
\]
\end{proposition}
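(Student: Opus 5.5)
The plan is to work pointwise at a fixed $\theta$ and obtain both limits from the chain rule, Slutsky's lemma, and the continuity of the covariance under a deterministic linear map. Write $A(\theta):=H(\theta)^*J(\theta)^{-1}$, viewed as a smooth section of $\operatorname{Hom}(\R^{\mathrm p},T^*\Theta)$. Then $\Psi_n=A\Psi_{n,0}$, and $A$ is deterministic and smooth in $\theta$.

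\textbf{Covariance limit.} For the covariance statement \eqref{eq:godambe-normalized-variability} nothing beyond linearity is needed. Since $A(\theta)$ is deterministic,
\[
a_n^{-2}\Cov_{n,\theta}(\Psi_n(\theta))=A(\theta)\,\bigl\{a_n^{-2}\Cov_{n,\theta}(\Psi_{n,0}(\theta))\bigr\}\,A(\theta)^*.
\]
By \eqref{eq:godambe-variability-assumption} this converges to $H^*J^{-1}JJ^{-1}H=H^*J^{-1}H$, using the symmetry of $J$. In coordinates this is simply the statement that a bilinear form pushed through a fixed matrix is continuous in the form.

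\textbf{Sensitivity limit.} For \eqref{eq:godambe-normalized-sensitivity}, apply the Leibniz rule for the induced connection on $T^*\Theta$. Treat $\Psi_{n,0}$ as a section of the trivial bundle with the flat derivative $\dd$, and $A$ as a section of $\operatorname{Hom}(\R^{\mathrm p},T^*\Theta)$ with the connection induced by $D$. This gives
\[
(D\Psi_n)_\theta=(DA)_\theta[\,\cdot\,,\Psi_{n,0}(\theta)]+A(\theta)\,(\dd\Psi_{n,0})_\theta .
\]
The first term is a fixed smooth tensor applied to $\Psi_{n,0}(\theta)=O_{P_{n,\theta}}(a_n)$, so it tends to $0$ in probability. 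The second term converges in probability to $A(\theta)H(\theta)=H^*J^{-1}H$ by \eqref{eq:godambe-sensitivity-assumptions} and the continuous mapping theorem. The result is independent of $D$, because changing $D$ adds a tensor applied to $\Psi_n(\theta)$, which is itself $O_P(a_n)$.

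\textbf{Symmetry and positivity.} Finally, $H^*J^{-1}H$ is symmetric because $J^{-1}$ is. It is positive definite because $H(\theta)$ is an isomorphism, so it is a well-defined element of $\operatorname{Sym}^2(T^*_\theta\Theta)$ and the two limits coincide.

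The only step needing care is making the Leibniz formula precise, that is, checking that the dual connection on $T^*\Theta$ applied to $A\Psi_{n,0}$ splits as above with the zeroth-order term controlled by $\Psi_{n,0}(\theta)$. This is where the condition $\Psi_{n,0}(\theta)=O_P(a_n)$ is used, and I would verify it in local coordinates using Christoffel symbols.
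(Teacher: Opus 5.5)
Your proposal is correct and essentially matches the paper's proof: set $A=H^*J^{-1}$, split $D\Psi_n=(DA)\Psi_{n,0}+A\,\dd\Psi_{n,0}$ so that the first term is $O_{P}(a_n)=o_P(1)$ and the second tends to $H^*J^{-1}H$, and push the covariance through the deterministic map $A$. Your remarks on independence of $D$ and on the symmetry and positive definiteness of $H^*J^{-1}H$ are correct additions.
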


\begin{proof}
Set $A:=H^*J^{-1}$, viewed as a smooth bundle homomorphism from the trivial
bundle $\Theta\times\R^{\mathrm p}$ to $T^*\Theta$. By the product rule,
\[
  D\Psi_n=(DA)\Psi_{n,0}+A\,\dd\Psi_{n,0}.
\]
At each fixed $\theta$, the first term is
$O_{P_{n,\theta}}(a_n)=o_{P_{n,\theta}}(1)$, while the second converges in
probability to
$H(\theta)^*J(\theta)^{-1}H(\theta)$, proving
\eqref{eq:godambe-normalized-sensitivity}. Since $A(\theta)$ is
deterministic,
\[
\begin{aligned}
  a_n^{-2}\Cov_{n,\theta}\!\left(\Psi_n(\theta)\right)
  &=
  A(\theta)
  \left\{
    a_n^{-2}\Cov_{n,\theta}\!\left(\Psi_{n,0}(\theta)\right)
  \right\}
  A(\theta)^*\\
  &\longrightarrow
  H(\theta)^*J(\theta)^{-1}H(\theta),
\end{aligned}
\]
which proves \eqref{eq:godambe-normalized-variability}.
\end{proof}

\end{document}